\DeclareMathAlphabet{\dutchcal}{U}{dutchcal}{m}{n}
\newtheorem{thm}{Theorem}[section]
\newtheorem{conj}{Conjecture}[section]
\newtheorem{lem}[thm]{Lemma}
\newtheorem{thmdef}[thm]{Theorem-Definition}
\newtheorem{prop}[thm]{Proposition}
\newtheorem{cor}[thm]{Corollary}
\theoremstyle{definition}
\newtheorem{ex}[thm]{Example}
\newtheorem{defn}[thm]{Definition}
\newtheorem{rmk}[thm]{Remark}
\setlist[enumerate,1]{label = \upshape{(\alph *)}, ref = \text{\upshape{(\alph *)}}}
\setlist[enumerate,2]{label = \upshape{(\roman *)}, ref = \text{\upshape{(\roman *)}}}
\setlist[enumerate,3]{label = \upshape{(\arabic *)}, ref = \text{\upshape{(\arabic *)}}}
\newcommand{\adj}{\dashv}
\newcommand{\functor}[1]{\operatorname{#1}}
\newcommand{\op}{^{\functor{op}}}
\newcommand{\co}{^{\functor{co}}}
\newcommand{\Ker}{\functor{Ker}}
\newcommand{\Hom}{\functor{Hom}}
\newcommand{\fixedcat}[1]{\operatorname{#1}}
\newcommand{\Ab}{\fixedcat{Ab}}
\newcommand{\Set}{\fixedcat{Set}}
\newcommand{\Mod}{\fixedcat{Mod}}
\renewcommand{\mod}{\fixedcat{mod}}
\newcommand{\MODCAT}{\mathbb{MODCAT}}
\newcommand{\MODCATCO}{\mathbb{MODCAT}\co}
\newcommand{\Funl}[1]{\fixedcat{Fun}_{\lambda}{\left(#1\right)}}
\newcommand{\Fun}[1]{\fixedcat{Fun}{\left(#1\right)}}
\newcommand{\fun}[1]{\fixedcat{fun}{\left(#1\right)}}
\newcommand{\funl}[1]{\fixedcat{fun}_{\lambda}{\left(#1\right)}}
\newcommand{\funi}[1]{\fixedcat{fun_\infty}(#1)}
\newcommand{\Zg}[1]{\fixedcat{Zg}(#1)}
\newcommand{\Presl}[1]{\fixedcat{Pres}_\lambda (#1)}
\newcommand{\Presm}[1]{\fixedcat{Pres}_\mu (#1)}
\newcommand{\fp}[1]{\fixedcat{Pres}_\omega (#1)}
\newcommand{\ev}[1]{\fixedcat{ev}_{#1}}
\newcommand{\category}[1]{\mathcal{#1}}
\newcommand{\B}{\category{B}}
\newcommand{\C}{\category{C}}
\newcommand{\D}{\category{D}}
\newcommand{\E}{\category{E}}
\newcommand{\F}{\category{F}}
\newcommand{\G}{\category{G}}
\newcommand{\I}{\category{I}}
\newcommand{\J}{\category{J}}
\newcommand{\K}{\category{K}}
\newcommand{\M}{\category{M}}
\newcommand{\N}{\category{N}}
\newcommand{\R}{\category{R}}
\renewcommand{\S}{\category{S}}
\newcommand{\T}{\category{T}}
\newcommand{\U}{\category{U}}
\newcommand{\ZZ}{\mathbb{Z}}
\newcommand{\ka}{{k}}
\newcommand{\la}{{l}}
\newcommand{\ma}{{m}}
\newcommand{\na}{{n}}
\renewcommand{\phi}{\varphi}
\newcommand{\lamdba}{\lambda}
\renewcommand{\subset}{\subseteq}
\newcommand{\extdir}[1]{\overrightarrow{{#1}}}
\newcommand{\extinv}[1]{\overleftarrow{{#1}}}
\newcommand{\setc}[2]{\left\{ #1  \ : \  #2 \right\}}
\newcommand{\rest}[2]{\left. {#1}\right\vert_{#2}}
\begin{document}
    \title{Functors from the infinitary model theory of modules and the Auslander-Gruson-Jensen 2-functor} \author{Samuel Dean}
    \date{\today}
    \maketitle
    \begin{center}
        \emph{For Jeremy Russell.}
    \end{center}
        
    \abstract{We define the notion of a $\lambda$-definable category, a generalisation of the notion of definable category from the model theory of modules. Let $\C$ be a $\lambda$-accessible additive category. We characterise the additive functors $\C\to\Ab$ which preserve $\lambda$-directed colimits and products, by showing that they are the finitely presented functors determined by a morphism between $\lambda$-presented objects (the same result appears, for the case $\lambda=\omega$, in \cite{prest2011}, but we give a proof for any infinite regular cardinal $\lambda$). We remark that \cite{arb} shows that every $\lambda$-definable subcategory of $\C$ is the class of zeroes of some set of such functors, thus obtaining a $\lambda$-ary generalisation of the finitary ($\lambda = \omega$) result from the finitary model theory of modules.

    We show that, to analyse the $\lambda$-ary model theory of a locally $\lambda$-presentable additive category $\C$, it is sufficient to consider \emph{finitary} pp formulas in the language of right $\Presl\C$-modules, where $\Presl\C$ is the category of $\lambda$-presented objects of $\C$, with the caveat that these pp formulas are interpreted among right $\Presl\C$-modules which preserve $\lambda$-small products. In particular, for an additive category $\R$ with $\lambda$-small products (e.g. $\R=\Presl\C\op$ for $\C$ a $\lambda$-presented additive category), the $\lambda$-accessible functors $\N\to\Ab$ which preserve products are precisely the finitely accessible functors $\R\Mod\to\Ab$ which preserve products, restricted to $\N$, where $\N\subseteq\R\Mod$ is the category of left $\R$-modules which preserve $\lambda$-small products.
    }
    \tableofcontents

    \section*{Introduction}\label{sec:introduction}
    In short, the \emph{raison d'\^e tre} of this paper is to carry out the chore of setting up the infinitary model theory of modules, at least in terms of appropriate functor categories.
Applications, to abstract analysis, contramodules, and dualisation of accessible functors, will come in future work.
As we shall discuss, the generalisation is not always as obvious as simply putting one's favourite Greek letter in front of every word in the model theory of modules, but thankfully the literature exists to carry some results forward in this manner.

Let $\R$ be a small preadditive category.
A \emph{left $\R$-module} is an additive functor $\R\to\Ab$.
We will write $\R\Mod$ for the category of left $\R$-modules.
A \emph{right $\R$-module} is an additive functor $\R\op\to\Ab$, where $\R\op$ denotes the opposite of $\R$.
We will write $\Mod\R$ for teh category of right $\R$-modules.

Let $\D$ be a full subcategory of $\M \ :\ =\R\Mod$.
We will write $\fun\M$ for the category of functors $\M\to\Ab$ which preserve directed colimits and products\footnote{It will be made clear why $\fun\M$ is a legitimate category. In fact, $\fun\M$ is equivalent to a small category.}.
The following are equivalent (see \cite{prest2009}):
\begin{itemize}
    \item $\D$ is closed under directed colimits, products, and pure subobjects.
    \item $\D=\setc{M\in \M}{\forall F\in\F,FM=0}$ for some full subcategory $\F\subset\fun\M$.
\end{itemize}
If either of these equivalent conditions are satisfied, we say that $\D$ is a \emph{definable subcategory} of $\M$.
These comments work as well when $\M$ is replaced by an arbitrary finitely accessible additive category.
Note that there is just a set of definable subcategories of $\M$.
In fact, there is a canonical topology on this set, and the resulting space $\Zg\M$ is called the \emph{Ziegler spectrum of $\M$} (or \emph{of $\R$} when $\M=\R\Mod$).
The Cantor-Bendixson rank of $\Zg\M$ is one of a few closely related measures of complexity of $\M$; See~\cite[Chapter 27]{prest2011} for a discussion.

In Section~\ref{sec:smallstuff}, we shall explain the set-theoretical legitimacy of considering categories of functors with large codomain, so long as a size condition, such as accessibility, is placed on the functors.
This has already been studied in the enriched context in~\cite{daylack}, but in Section~\ref{subsec:smallfuncats}, we will also provide an additional characterisation of \emph{small functors} (small colimits of representables) in the $\Set$-enriched context, by showing that $F:\C\to\Set$ is small if and only if there is a cofinal functor $\J\to\E_F$ where $\J$ is small and $\E_F$ is the category of elements of $F$.
We apply this characterisation to show that small functors $\C\to\Set$ are closed under left Kan extensions along any functor $\B\to\C$, with no restriction on the size of $\B$.
We then give a ``weak representability'' criterion, a tool for us to cut down the presentation of a functor, which is really just a careful analysis of Freyd's representability theorem~\cite[p. 121]{maclane}.

Let $\lambda$ be a regular infinite cardinal.
In Section~\ref{subsec:fpstuff}, we characterise the $\lambda$-accessible product preserving functors $\C\to\Ab$, where $\C$ is additive and $\lambda$-accessible, and we point out that $\lambda$-definable subcategories of $\C$ can be obtained using those functors.
Here \textit{$\lambda$-definable subcategory }is defined as an obvious extension of the notion of \textit{definable subcategory}: a subcategory closed under $\lambda$-directed colimits, $\lambda$-pure subobjects, and products.

Let $\R$ be a pre-additive category with $\lambda$-small products.
We show that a module $M\in\R\Mod$ preserves $\lambda$-small products if and only if it is a $\lambda$-directed colimit $M=\int^{i\in I}M_i$ of finitely presented modules $M_i$ (which, to the author's current knowledge, is not presently in the literature).
If $\N\subseteq\R\Mod$ is the category of all such modules, Section~\ref{subsec:suff} shows that a functor $F:\N\to\Ab$ preserves $\lambda$-directed colimits and products if and only if it is the restriction to $\N$ of some pp-pair in the (finitary) language of left $\R$-modules.
Thus, so long as they are interpreted among modules which preserve $\lambda$-small products, this shows that pp-pairs are sufficient for $\lambda$-ary module theory of modules.
It not difficult to see that a $\lambda$-definable subcategory $\D\subseteq \N$ can be found as the class of modules in $\N$ which make some pp-pairs vanish, but we establish this stronger result about the appearance of the functors in $\funl\N$, the functors $\N\to\Ab$ which preserve products and $\lambda$-directed colimits.

    \section{Accessible functors and $\lambda$-definable subcategories}\label{sec:smallstuff}
    \subsection{Small functors}\label{subsec:smallfuncats}
\begin{defn}
    Let $\J$, $\C$ and $\D$ be categories.
    \begin{enumerate}[(a)]
        \item For a functor $D:\J\to\C$ we will write
        \[\int_{J\in\J}DJ\text{ and }\int^{J\in\J}DJ\]
        for its limit and colimit, if they exist. \emph{(We do not require $\J$ to be small.)}
        \item For a functor $D:\J\times\C\to \D$, we write
        \[\int^{J\in \J}D(J,-) \ : \ \C\to \D \ : \ C\ \mapsto \int^{J\in \J}D(J,C)\]
        for the \emph{colimit functor} if the colimit of $D(-,C)$ exists for each $C\in\C$.
    \end{enumerate}
\end{defn}
\begin{defn}
    Let $\C$ be a category.
    A functor $F:\C\to\Set$ is \emph{small} if it is the colimit functor of a small diagram of representables.
    That is, $F$ is small if
    \begin{displaymath}
        F\simeq\int^{J\in\J\op}\Hom_\C\left(DJ,-\right)
    \end{displaymath}
    for a functor $D:\J\to\C$ where $\J$ is small.
\end{defn}
\begin{defn}
    The \emph{category of elements} of a functor $F:\C\to\Set$ has:
    \begin{itemize}
        \item \emph{Objects} $(A,x)$ given by an object $A\in\C$ and an element $x\in FA$.
        \item \emph{Morphisms} $f:(A,x)\to(B,y)$ given by morphisms $f:A\to B\in \C$ such that $y=(Ff)x$.
    \end{itemize}
    We write $\E_F$ for the category of elements of $F$.
\end{defn}
\begin{defn}
    We say that objects $A,B\in\C$ in a category $\C$ are \emph{connected} if between any two objects $A,B\in\C$ there is a path of arrows and inverse arrows
    \[A = A_0\leftarrow A_1\rightarrow A_2\leftarrow A_3\rightarrow \dots \leftarrow A_n= B\in\C.\]
    This induces notions of \emph{connected component} and \emph{connected category}.
\end{defn}
\begin{prop}[{\cite[III.7]{maclane}}\thlabel{cancol} for case when $\C$ is small]
    For any functor $F:\C\to\Set$,
    \begin{displaymath}
        F\simeq \int^{(A,x)\in\E\op_F} \Hom_\C(A,-).
    \end{displaymath}
\end{prop}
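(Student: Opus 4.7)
The plan is to adapt the classical proof in \cite[III.7]{maclane} to the case where $\C$, and hence $\E_F$, need not be small. First, I would use the Yoneda lemma to produce a cocone with vertex $F$ over the diagram $D:\E_F\op\to\Fun{\C,\Set}$, $(A,x)\mapsto\Hom_\C(A,-)$, by taking the component at $(A,x)$ to be the natural transformation $\tau_{(A,x)}:\Hom_\C(A,-)\to F$ corresponding to $x\in FA$, explicitly $\tau_{(A,x)}(g:A\to C)=(Fg)(x)$. A morphism in $\E_F\op$ from $(C,z)$ to $(A,x)$ is precisely a $g:A\to C$ in $\C$ with $(Fg)(x)=z$, sent by $D$ to precomposition $h\mapsto hg$. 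The required cocone identity $\tau_{(A,x)}(hg)=\tau_{(C,z)}(h)$ then reduces to functoriality of $F$.

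Next, I would verify pointwise that $F$ realises the colimit with this cocone. The candidate colimit at $C\in\C$ is a quotient of $\coprod_{(A,x)\in\E_F}\Hom_\C(A,C)$ by the relations imposed by $D$, and the induced comparison $\phi_C$ sends the class of $g:A\to C$ to $(Fg)(x)$. Surjectivity is immediate from $z=\phi_C([C,z,1_C])$. Injectivity follows because whenever $(Fg)(x)=z$ the arrow $g$ is a morphism $(A,x)\to(C,z)$ in $\E_F$, and the corresponding arrow in $\E_F\op$ applied to $1_C\in\Hom_\C(C,C)$ forces $[C,z,1_C]=[A,x,g]$ in the quotient; hence any two preimages of the same $z$ are identified with $[C,z,1_C]$.

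The main obstacle here is set-theoretic rather than categorical. Because $\C$ is allowed to be large, $\E_F\op$ is generally a proper class, so a priori the colimit need not exist in $\Set$; the pointwise formula for it is a quotient of a class-sized coproduct, which in general is only a class. The surjectivity/injectivity argument above circumvents this obstacle precisely by exhibiting an explicit bijection with the set $FC$, which shows simultaneously that the pointwise colimit exists in $\Set$ and that it is computed by $F$. In effect, the substance of the proposition beyond the usual Yoneda bijection is exactly that this class-sized coproduct quotients down to a set.
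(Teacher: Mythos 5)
Your proof is correct and takes essentially the same route as the paper, which simply asserts that for each $B\in\C$ the maps $\Hom_\C(A,B)\to FB$, $f\mapsto (Ff)x$, form a colimiting cocone --- exactly what your Yoneda cocone plus the surjectivity (via $1_C$) and injectivity (via $g:(A,x)\to(C,z)$ in $\E_F$) computations establish. The only caveat is presentational: since $\E_F$ may be a proper class, the ``quotient of a class-sized coproduct'' is not literally available, so your argument should be read as a direct verification of the universal property of the cocone against an arbitrary cocone of sets, which the same two computations supply verbatim.
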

\begin{proof}
    One can show that, for each $B\in\C$, the maps
    \[\Hom_\C(A,B)\to FB\ :\ f\ \mapsto \ (Ff)x\qquad((A,x)\in\E_F\op)\]
    are colimiting.
    Therefore $F$ is the required colimit functor.
\end{proof}
\begin{rmk}
    \thref{cancol} also has an additive analog where $\C$ is an additive category, $F$ is an additive functor $\C\to\Ab$ and $\Ab$ replaces $\Set$.
    Note that we really need $\C$ to be additive for this: If $\C$ is preadditive and not additive then $F$ is the canonical colimit of \emph{finite direct sums} of representables.
\end{rmk}
\begin{cor}[Well-known]
    \thlabel{conncolimit}For any functor $F:\C\to\Set$, the colimit
    \begin{displaymath}
        \int^{C\in\C}FC
    \end{displaymath}
    exists if and only if $\E_F$ has a set of connected components, and if it does exist then the colimit is the set of path components of $\E_F$.
\end{cor}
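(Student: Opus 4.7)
My plan is to compute the colimit directly from its explicit construction in $\Set$. For any functor $F:\C\to\Set$ (with $\C$ possibly large), the colimit $\int^{C\in\C}FC$, when it exists in $\Set$, is obtained as the quotient of the disjoint union $\bigsqcup_{C\in\C}FC$ by the equivalence relation $\sim$ generated by the clauses $(C,x)\sim(C',(Ff)(x))$ for every morphism $f:C\to C'$ in $\C$. I would then observe that this disjoint union is canonically the class of objects of $\E_F$, and that a single generator $(C,x)\sim(C',(Ff)(x))$ of $\sim$ is nothing other than the assertion that there is a morphism $(C,x)\to(C',(Ff)(x))$ in $\E_F$. Hence the equivalence classes of $\sim$ are precisely the connected components of $\E_F$.

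From this the biconditional is immediate: the quotient above is a (genuine, small) set if and only if $\E_F$ has a set of connected components, and when it is, that quotient is by construction the set of path components of $\E_F$, as required.

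As a sanity check, one can rederive the statement via \thref{cancol} and Fubini for coends: from $F\simeq\int^{(A,x)\in\E_F\op}\Hom_\C(A,-)$ one gets
\[\int^{C\in\C}FC\simeq\int^{(A,x)\in\E_F\op}\int^{C\in\C}\Hom_\C(A,C),\]
whose inner colimit is a singleton because the category of elements of $\Hom_\C(A,-)$ is the coslice $A/\C$, which has the initial object $(A,1_A)$; and the outer colimit of a singleton-valued constant functor on a category is its set of connected components, when that set exists. The main obstacle is the modest amount of set-theoretic bookkeeping around existence: one must verify that the two sides of the biconditional are simultaneously sets or simultaneously proper classes. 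This is essentially transparent from the direct quotient description, but requires a little care to justify rigorously in the Fubini-based approach, which is why I would present the direct construction first and treat the coend computation only as corroboration.
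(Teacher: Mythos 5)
The paper never actually proves this corollary: it is labelled ``Well-known'' and is simply meant to be read off from \thref{cancol}, so there is no official argument for your proof to diverge from. Your primary argument --- identify the underlying class of $\bigsqcup_{C\in\C}FC$ with the objects of $\E_F$, note that each generating relation of the colimit quotient is exactly a morphism of $\E_F$, and conclude that the classes are the connected components --- is the standard proof of this fact, and your coend ``sanity check'' is essentially the derivation from \thref{cancol} that the paper's placement of the corollary suggests. So in substance you are doing the right thing, and arguably giving more detail than the paper does.

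The one point you should not wave away as bookkeeping is the direction ``the colimit exists $\Rightarrow$ $\E_F$ has a set of components''. Your opening sentence, that the colimit ``is obtained as the quotient of the disjoint union'', is only a construction when $\C$ is small; for large $\C$ the disjoint union is in general a proper class, so you cannot appeal to the quotient description to identify an abstractly given colimit with it --- that identification is precisely what has to be shown. The missing argument is short: suppose a colimiting cocone $(\lambda_C:FC\to L)_{C\in\C}$ exists with $L$ a set. The cocone equations force each $\lambda$ to be constant on connected components of $\E_F$. Elements lying in distinct components have distinct images in $L$, because the indicator function of any fixed component defines a cocone $F\to\{0,1\}$ (naturality holds since $(A,a)$ and $(B,(Ff)a)$ are connected), and this cocone must factor through $L$. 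Finally the images of the $\lambda_C$ cover $L$: otherwise the corestriction to their union is a cocone through which $\lambda$ factors, and composing back into $L$ contradicts the uniqueness clause of the universal property. Hence the components inject into (indeed biject with) the set $L$, so they form a set, and $L$ is canonically the set of path components. With this supplement --- which also repairs the same existence issue in your Fubini argument --- your proof is complete.
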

\begin{defn}
    We say that a functor $H:\J\to\K$ is \emph{final} if, for any object $K\in\K$
    \begin{displaymath}
        \int^{J\in\J}\Hom_\K(K,HJ)\simeq 1.
    \end{displaymath}
    For a subcategory $\J\subseteq \K$ of $\K$, if the inclusion $\J\to \K$ is final, we say that $\J$ is a \emph{final subcategory} of $K$.
    The dual notion of final is \emph{cofinal}.
    (The words final and cofinal are sometimes used contrariwise.)
\end{defn}
\begin{prop}[{\cite[IX.3]{maclane}}]
    The following conditions on a functor $H:\J\to\K$ are equivalent:
    \begin{enumerate}
        \item $H$ is final.
        \item For any object $K\in\K$, the comma category $K\downarrow H$ is non-empty and connected.
        \item For any functor $G:\K\to \C$, any object $C\in\C$ and any cone $\varphi:GH\to C$, there is a unique cone $\psi:G\to C$ such that $\psi H=\varphi$.
        \item For any functor $G:\K\to\C$, the colimit
        \[\int^{K\in\K}GK\]
        exists if and only if the colimit
        \[\int^{J\in\J}GHJ\]
        exists, and these colimits are isomorphic when they do exist.
    \end{enumerate}
\end{prop}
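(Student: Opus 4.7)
The plan is to prove the cyclic implications (1) $\Rightarrow$ (2) $\Rightarrow$ (3) $\Rightarrow$ (4) $\Rightarrow$ (1), using \thref{conncolimit} as the bridge between $\Set$-valued colimits and path-component counts.

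The equivalence (1) $\Leftrightarrow$ (2) falls directly out of \thref{conncolimit} applied to the functor $\Hom_\K(K,H-):\J\to\Set$, whose category of elements is precisely the comma category $K\downarrow H$: objects are pairs $(J,f:K\to HJ)$ and a morphism $g:J\to J'$ is a map in $\J$ satisfying $(Hg)\circ f=f'$. Hence the colimit in (1) is the set of path components of $K\downarrow H$, which is a singleton exactly when $K\downarrow H$ is non-empty and connected.

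For (2) $\Rightarrow$ (3), given a cone $\varphi:GH\to C$, I would define $\psi_K:GK\to C$ by picking any $(J,f)\in K\downarrow H$ (non-empty) and setting $\psi_K=\varphi_J\circ Gf$. Well-definedness is the main technical step and the one I expect to demand the most care: any two choices are joined by a zigzag in $K\downarrow H$ by connectedness, and along each edge of the zigzag the cone relations on $\varphi$ give the needed equality $\varphi_J\circ Gf=\varphi_{J'}\circ Gf'$. Naturality of $\psi$ follows easily: for $h:K\to K'$, a representative $(J,f')$ for $K'$ yields the representative $(J,f'\circ h)$ for $K$. Uniqueness is forced because taking $K=HJ$ and $f=\mathrm{id}_{HJ}$ pins down $\psi_{HJ}=\varphi_J$, which then determines $\psi$ everywhere by the just-established naturality.

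For (3) $\Rightarrow$ (4), I would use the bijection supplied by (3) to transport colimiting cones in either direction. Given a colimiting cone $\tau:GH\to L$, its unique extension $\psi:G\to L$ is colimiting because any $\phi:G\to C$ restricts to a cone $\phi H$ factoring through $\tau$ via some $u:L\to C$; then $u\psi$ and $\phi$ are both extensions of $u\tau=\phi H$, so (3) forces $u\psi=\phi$. The reverse direction is symmetric. Finally, (4) $\Rightarrow$ (1) follows by applying (4) with $G=\Hom_\K(K,-)$: the category of elements is the slice $K\downarrow\K$, which is connected thanks to its terminal object $(K,\mathrm{id}_K)$, so \thref{conncolimit} gives $\int^{K'\in\K}\Hom_\K(K,K')\simeq 1$; then (4) transfers this to $\int^{J\in\J}\Hom_\K(K,HJ)\simeq 1$, which is (1).
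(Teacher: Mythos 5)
Your proof is correct, and since the paper offers no proof of this proposition at all (it simply cites MacLane IX.3), your argument is essentially the standard one from that source: identify $K\downarrow H$ with the category of elements of $\Hom_\K(K,H-)$ and use \thref{conncolimit} to translate condition (1) into (2), then build and transport (co)cones. Two small points to tidy: in (4) $\Rightarrow$ (1), the object $(K,\mathrm{id}_K)$ is \emph{initial}, not terminal, in $K\downarrow\K$ (either suffices for connectedness, so nothing breaks); and in the uniqueness step of (2) $\Rightarrow$ (3) you should invoke the cocone property of the arbitrary competitor $\psi'$ (namely $\psi'_K=\psi'_{HJ}\circ Gf=\varphi_J\circ Gf$), not merely the naturality of the $\psi$ you constructed. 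The ``symmetric'' reverse half of (3) $\Rightarrow$ (4) does work, but it is worth noting it uses (3) slightly differently: one checks directly that $\sigma H$ is colimiting by extending each cocone on $GH$ uniquely to one on $G$ and factoring through the colimit of $G$.
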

\begin{thm}
    \thlabel{smallcofinal}
    A functor $F:\C\to\Set$ is small if and only if there is a small category $\J$ and a cofinal functor $\J\to\E_F$.
\end{thm}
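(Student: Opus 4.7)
The plan is to use the canonical colimit decomposition from \thref{cancol}, which writes $F\simeq\int^{(A,x)\in\E_F\op}\Hom_\C(A,-)$. The theorem then amounts to showing that $F$ is small precisely when the universal indexing category $\E_F\op$ admits a final functor from some small $\J\op$, i.e., when there is a cofinal functor $\J\to\E_F$ with $\J$ small. For the ``if'' direction, given a cofinal $H:\J\to\E_F$, the dual $H\op:\J\op\to\E_F\op$ is final, and applying condition (4) of the proposition on final functors pointwise in $B$ to the diagram $(A,x)\mapsto\Hom_\C(A,B)$ gives
\[F(-)\simeq\int^{(A,x)\in\E_F\op}\Hom_\C(A,-)\simeq\int^{J\in\J\op}\Hom_\C(\pi_1 HJ,-),\]
where $\pi_1:\E_F\to\C$ is the forgetful functor, exhibiting $F$ as small.

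For the ``only if'' direction, suppose $F\simeq\int^{J\in\J\op}\Hom_\C(DJ,-)$ for a small $\J$ and a functor $D:\J\to\C$. The colimit cocone gives natural transformations $\eta^J:\Hom_\C(DJ,-)\to F$, and I set $x_J := \eta^J_{DJ}(\text{id}_{DJ})\in F(DJ)$. For $f:J\to J'$, the coherence $\eta^{J'}\circ\Hom_\C(Df,-)=\eta^J$ gives $F(Df)(x_J)=x_{J'}$, so $\tilde D:\J\to\E_F$ sending $J\mapsto(DJ,x_J)$ and $f\mapsto Df$ is a well-defined functor. The main work is then to prove that $\tilde D$ is cofinal, i.e., that the comma $\tilde D\downarrow(A,x)$ is non-empty and connected for each $(A,x)\in\E_F$. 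Its objects are pairs $(J,g:DJ\to A)$ with $F(g)(x_J)=x$. The point is that \thref{conncolimit} applied to $\Hom_\C(D-,A):\J\op\to\Set$ identifies $FA=\int^{J\in\J\op}\Hom_\C(DJ,A)$ with the set of connected components of the category of elements of that functor, and the component corresponding to $x\in FA$ consists of exactly those $(J,g)$ with $F(g)(x_J)=x$. Up to taking opposite categories, this connected component is precisely $\tilde D\downarrow(A,x)$, so the latter is non-empty and connected.

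The main obstacle is the variance bookkeeping in the cofinality argument: the comma category lives over $\J$ while the elements category of $\Hom_\C(D-,A)$ lives over $\J\op$, and a careful comparison shows that the two categories share the same objects but have reversed morphisms. Since connectedness and non-emptiness are both self-dual, this mismatch is harmless, but it needs to be verified explicitly to conclude that the connected components of the elements category match the connectivity of the commas.
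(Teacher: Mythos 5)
Your proof is correct and follows essentially the same route as the paper: both directions rest on the canonical colimit over $\E_F\op$ from \thref{cancol}, the construction of the comparison functor $\J\to\E_F$ via the Yoneda elements $x_J$ of the colimiting cocone, and an application of \thref{conncolimit} to the category of elements of $\Hom_\C(D-,A)$ to get cofinality. If anything you are slightly more careful than the paper about non-emptiness of the comma categories and the variance bookkeeping (note only that the cocone compatibility should read $\eta^J\circ\Hom_\C(Df,-)=\eta^{J'}$ for $f:J\to J'$, which still yields $F(Df)(x_J)=x_{J'}$ as you use it).
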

\begin{proof}
    We know that $F$ can be written as the colimit of a diagram of representables, indexed by $\E\op_F$:
    \begin{displaymath}
        F=\int^{(A,x)\in \E\op_F}\Hom_\C(A,-).
    \end{displaymath}
    Therefore, if there is such a cofinal functor $E:\J\to\E_J$ then
    \begin{displaymath}
        F\simeq\int^{J\in\J\op}\Hom_\C(UEJ,-),
    \end{displaymath}
    where $U:\E_F\to\C$ is the forgetful functor.

    For the converse, if $F$ is small then
    \begin{displaymath}
        F\simeq\int^{J\in\J\op}\Hom_\C(DJ,-)
    \end{displaymath}
    for some diagram $D:\J\to\C$ where $\J$ is small.
    Let
    \begin{displaymath}
        \phi_J:\Hom_\C(DJ,-)\to F
    \end{displaymath}
    be the component of the colimiting cone at each $J\in\J\op$.
    This corresponds to an element $x_J\in FDJ$ which is natural in $J\in\J\op$, and this defines a functor
    \[E:\J\to\E_F\ :\ J\ \mapsto \ (DJ,x_J).\]
    We will show that $E$ is cofinal.
    Let $(A,x)\in\E_F$ be given.
    An object in $E\downarrow (A,x)$ consists of an object $J\in \J$ morphism $f:DJ\to A$ such that $(Ff)x_J=x$.
    Suppose another such object $(K\in\J,g
    :DK\to A)$ is given.
    This amounts to
    \begin{displaymath}
    (\phi_J)
        _A(f)=x=(\phi_K)_A(g).
    \end{displaymath}
    However, $(\phi_J)_A$ and $(\phi_K)_A$ are components of a colimiting map which express the fact that $FA=\int^{J\in\J\op}\Hom_\C(DJ,A)$.
    Let $\D$ be the category of elements of the functor
    \[\Hom_\C(D-,A):\J\op\to\Set.\]
    Since $(J,f)$ and $(K,g)$ are objects of $\D$ which have the same image in the colimit, they must be connected as objects in $\D$ by \thref{conncolimit}, so there is a sequence of maps
    \begin{displaymath}
    \xymatrix{(J,f)=(J_0,f_0)&(J_1,f_1)\ar[l]\ar[r]&(J_2,f_2)&(J_3,f_3)\ar[l]\ar[r]&\dots&(J_n,f_n)=(K,g)\ar[l]}\end{displaymath}
    in $\D$, and every object in this path also must also have image $x\in FA$ in the colimit (by naturality of $\phi$). Therefore, the above is also a path in $E\downarrow (A,x)$.
    We have that $E\downarrow (A,x)$ is connected, since $(J,f)$ and $(K,g)$ are arbitrary objects.
\end{proof}
\begin{thm}
    For a functor $F:\C\to\Set$, the following are equivalent.
    \begin{enumerate}
        \item\label{itm:smalla} $F$ is the left Kan extension of $F|_\B$ along the inclusion $\B\hookrightarrow\C$, for a small full subcategory $\B\subseteq \C$.
        \item\label{itm:smallb} $F$ is the left Kan extension of a functor $\B\to \Set$ along a functor $\B\to\C$, for a small category $\B$.
        \item\label{itm:smallc} $F$ is the left Kan extension of a small functor $\B\to \Set$ along some functor $\B\to\C$.
        \item\label{itm:smalld} $F$ is small.
    \end{enumerate}
\end{thm}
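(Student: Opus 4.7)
The plan is to prove the cycle $(1)\Rightarrow(2)\Rightarrow(3)\Rightarrow(4)\Rightarrow(1)$, with the first three implications essentially formal and the real work in $(4)\Rightarrow(1)$, which will invoke \thref{smallcofinal}.

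The implication $(1)\Rightarrow(2)$ is immediate, taking the inclusion $\B\hookrightarrow\C$ as the functor. For $(2)\Rightarrow(3)$, I would argue that any functor $G:\B\to\Set$ with $\B$ small is automatically small: by \thref{cancol}, $G\simeq\int^{(A,x)\in\E_G\op}\Hom_\B(A,-)$, and $\E_G$ is small because its objects form the set-indexed disjoint union $\bigsqcup_{A\in\B}GA$ of sets and its morphisms are a subcollection of those of $\B$. For $(3)\Rightarrow(4)$, write the small functor as a small colimit of representables $G\simeq\int^{J\in\J\op}\Hom_\B(DJ,-)$; then $\mathrm{Lan}_H$ commutes with this colimit and sends each representable $\Hom_\B(DJ,-)$ to $\Hom_\C(HDJ,-)$ by the co-Yoneda lemma, so $\mathrm{Lan}_H G\simeq\int^{J\in\J\op}\Hom_\C(HDJ,-)$ is a small colimit of representables on $\C$, hence small.

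The main work is $(4)\Rightarrow(1)$. Applying \thref{smallcofinal}, I would obtain a small category $\J$ and a cofinal functor $E:\J\to\E_F$; write $E(J)=(UEJ,x_J)$ where $U:\E_F\to\C$ is the forgetful functor. Take $\B$ to be the (small) full subcategory of $\C$ on the objects $\{UEJ:J\in\J\}$. The inclusion $\iota:\B\hookrightarrow\C$ induces a functor $\iota^*:\E_{F|_\B}\to\E_F$, and $E$ factors through $\iota^*$. Once I know $\iota^*$ is cofinal I can chain isomorphisms
\[F\simeq\int^{(A,x)\in\E_F\op}\Hom_\C(A,-)\simeq\int^{(B,y)\in\E_{F|_\B}\op}\Hom_\C(B,-)\simeq\mathrm{Lan}_\iota F|_\B,\]
using \thref{cancol} on $F$, cofinality of $\iota^*$, and the fact that $\mathrm{Lan}_\iota$ preserves small colimits and sends representables to representables (so applying $\mathrm{Lan}_\iota$ to the canonical colimit of $F|_\B$ gives the middle coend).

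The only real obstacle is the cofinality of $\iota^*$. Fix $(A,x)\in\E_F$. Non-emptiness of $(A,x)\downarrow\iota^*$ is immediate from non-emptiness of $(A,x)\downarrow E$. For connectedness, given two objects $(B_i,y_i,f_i)$ for $i=1,2$, cofinality of $E$ at $(B_i,y_i)\in\E_F$ produces morphisms $g_i:B_i\to UEJ_i$ with $F(g_i)(y_i)=x_{J_i}$, yielding arrows $(B_i,y_i,f_i)\to(UEJ_i,x_{J_i},g_i\circ f_i)$ in $(A,x)\downarrow\iota^*$; and the two resulting target objects, reinterpreted as objects of $(A,x)\downarrow E$, are connected there by a zigzag which transfers directly to $(A,x)\downarrow\iota^*$ since every intermediate $UEK$ lies in $\B$.
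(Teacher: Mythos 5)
Your (a)$\Rightarrow$(b)$\Rightarrow$(c)$\Rightarrow$(d) is fine: the observation that any functor on a small category is small, and that applying $\mathrm{Lan}_H$ to a small colimit of representables gives a small colimit of representables, both work (your (c)$\Rightarrow$(d) is essentially the paper's argument, phrased through the presentation of the small functor rather than through \thref{smallcofinal}). The problem is in (d)$\Rightarrow$(a) (which the paper only cites from Day--Lack, so a self-contained proof is welcome): the key cofinality check has its variance reversed. The canonical presentation $F\simeq\int^{(A,x)\in\E_F\op}\Hom_\C(A,-)$ is a colimit indexed by $\E_F\op$, so to restrict it along $(\iota^*)\op:\E_{F|_\B}\op\to\E_F\op$ you must show that for each $(A,x)\in\E_F$ the comma category $\iota^*\downarrow(A,x)$ --- objects $\bigl((B,y),\,g:B\to A\text{ with }(Fg)(y)=x\bigr)$, i.e.\ morphisms \emph{into} $(A,x)$ --- is non-empty and connected; this is exactly the condition that the proof of \thref{smallcofinal} verifies for $E$ (it checks $E\downarrow(A,x)$). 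You instead work with $(A,x)\downarrow\iota^*$ (morphisms out of $A$), and you invoke ``cofinality of $E$ at $(B_i,y_i)$'' to produce morphisms $g_i:B_i\to UEJ_i$ with $F(g_i)(y_i)=x_{J_i}$. Cofinality of $E$ gives morphisms in the opposite direction, $UEJ_i\to B_i$ over $y_i$; it gives neither your $g_i$ nor the non-emptiness of $(A,x)\downarrow E$ that your non-emptiness claim uses. (Take $F=\Hom_\C(C_0,-)$ and $E$ the cofinal functor from the terminal category picking out $(C_0,1_{C_0})$: then $(A,x)\downarrow E$ is empty unless $x$ admits a retraction.) And even if those claims held, connectedness of $(A,x)\downarrow\iota^*$ is the condition governing colimits indexed by $\E_F$, not by $\E_F\op$, so it would not justify the middle isomorphism in your chain.

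The statement you actually need is true, and it is proved by the mirror image of your zigzag: non-emptiness of $\iota^*\downarrow(A,x)$ follows from non-emptiness of $E\downarrow(A,x)$; and given two objects $(B_i,y_i,g_i:B_i\to A)$, non-emptiness of $E\downarrow(B_i,y_i)$ yields $h_i:UEJ_i\to B_i$ with $(Fh_i)(x_{J_i})=y_i$, hence arrows $(UEJ_i,x_{J_i},g_ih_i)\to(B_i,y_i,g_i)$ in $\iota^*\downarrow(A,x)$, and the objects $(UEJ_i,x_{J_i},g_ih_i)$ lie in the image of $E\downarrow(A,x)$, whose connectedness supplies the connecting zigzag. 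With every arrow in that paragraph dualised your route to (a) is correct; the remaining points (the coend formula for $\mathrm{Lan}_\iota F|_\B$ over the small category $\E_{F|_\B}$, and that the composite isomorphism is the canonical comparison, which follows from full faithfulness of $\iota$) are routine.
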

\begin{proof}
    It is clear that $\ref{itm:smalla}\implies\ref{itm:smallb}\implies\ref{itm:smallc}$.

    Given a functor $E:\B\to\Set$, one can construct its left Kan extension $F:\C\to\Set$ along $P:\B\to\C$ as a colimit functor
    \begin{displaymath}
        F=\int^{(A,x)\in\E_E\op}\Hom_\C(PA,-),
    \end{displaymath}
    if this colimit functor exists.
    But, if $E$ is small, by \thref{smallcofinal}, there is a final functor $\J\op\to\E_E\op$ for some small category $\J$, and so this colimit functor can certainly be formed and $F$ is small.
    Therefore $\ref{itm:smallc}\implies \ref{itm:smalld}$.

    The fact $\ref{itm:smalld}\implies\ref{itm:smalla}$ is proven in~\cite{daylack}: If
    \begin{displaymath}
        F\simeq \int^{J\in\J\op} \Hom_\C(DJ,-)
    \end{displaymath}
    then one can take $\B=\{DJ\mid J\in\J\}$.
\end{proof}
\begin{defn}
    For a category $\C$ we write $(\C,\Set)$ for the \emph{category of small functors}.
    The legitimacy of this category is established by formally giving it the following data:
    \begin{itemize}
        \item \emph{Objects} given by pairs $(\J,E)$ where $\J$ is a small category and $E:\J\to\C$ is a functor.
        \item \emph{Morphisms} $(\J,E)\to (\K,F)$ given by elements of \begin{displaymath}
                                                                           \int_{J\in\J}\int^{K\in\K\op}\Hom_\C(FK,EJ).
        \end{displaymath}
        \item For any object $(\J,E)$, and any $J\in\J\op$, the identity $1_{EJ}\in\Hom_\C(EJ,EJ)$ gives a canonical element $(J, 1_{EJ})$ of $\Hom_\C(E-,EJ):\J\to\Set$; the path components of these elements, for each $J\in\J$, give rise to the \emph{identity}
        \begin{displaymath}
            1_{(\J,E)}\in  \int_{J\in\J}\int^{K\in\J\op}\Hom_\C(EK,EJ).
        \end{displaymath}
        \item For morphisms
        \begin{displaymath}
            \xymatrix{(\I,D)\ar[r]^\alpha&(\J,E)\ar[r]^\beta&(\K,F)}
        \end{displaymath}
        and any $I\in\I$, $\alpha_I$ is the path component of some $(J,f:EJ\to DI)$ in the category of elements of $\Hom_\C(E-,DI)$.
        But, $\beta_J$ is the path component of some $(K,g:FK\to EJ)$ in the category of elements of $\Hom_\C(F-,EJ)$.
        We set $(\beta\alpha)_I$ to the path component of $(K,fg:FK\to DI)$ in the category of elements of $\Hom(F-,DI)$.
        It is not difficult to show that this defines an element
        \begin{displaymath}
            \beta\alpha\in\int_{I\in\I}\int^{K\in\K\op}\Hom_\C(FK,DI),
        \end{displaymath}
        the \emph{composition} of $\beta$ and $\alpha$.
    \end{itemize}
    It is clear that we get a category $(\C,\Set)$ since morphisms $(\J,E)\to(\K,F)$ correspond to natural transformations
    \begin{displaymath}
        \int^{J\in\J\op}\Hom_\C(EJ,-)\to \int^{K\in\K\op}\Hom_\C(FK,-).
    \end{displaymath}
\end{defn}

\subsection{Accessible functors and $\lambda$-definable subcategories}\label{subsec:accfuncats}
We defer to~\cite{ar} for our terminology.
Throughout, $\mu$ and $\lambda$ will refer to regular infinite cardinals. \emph{(If $\alpha$ is a singular cardinal, then a poset is $\alpha$-directed if and only it is $\alpha^+$-directed. Since all successor cardinals are regular, we may assume our indices of accessibility are so.)} When $\C$ is accessible, we will write $\Presl\C$ for its full subcategory of $\lambda$-presented objects.

\begin{defn}
    A functor $F:\C\to\D$ is \emph{$\lambda$-accessible} if $\C$ and $\D$ are $\lambda$-accessible and $F$ preserves $\lambda$-directed colimits; it is \emph{accessible} if it is $\lambda$-accessible for some $\lambda$.
\end{defn}

\begin{prop}
    \thlabel{lambdapresacc}
    Suppose $\D$ is a $\lambda$-accessible category.
    For a functor $F:\C\to\D$, the following are equivalent:
    \begin{enumerate}
        \item\label{itm:accb} $\Hom(D,F-):\C\to\Set$ is accessible for each $D\in\Presl\D$.
        \item\label{itm:accc} $F$ is accessible.
    \end{enumerate}
\end{prop}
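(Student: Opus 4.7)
The plan is to handle the easy direction \ref{itm:accc} $\Rightarrow$ \ref{itm:accb} by composition, and to attack the hard direction \ref{itm:accb} $\Rightarrow$ \ref{itm:accc} through a Yoneda-style test of isomorphisms, using that $\Presl\D$ is essentially small and that the restricted Yoneda embedding $\D\hookrightarrow[\Presl\D\op,\Set]$ is fully faithful for every $\lambda$-accessible $\D$.

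For \ref{itm:accc} $\Rightarrow$ \ref{itm:accb}, I would assume $F$ is $\nu$-accessible, pick a regular $\mu$ with $\mu\geq\max(\nu,\lambda)$, and observe that every $D\in\Presl\D$ is then also $\mu$-presentable; so $\Hom_\D(D,-)$ preserves $\mu$-directed colimits, and composing with $F$ shows that $\Hom_\D(D,F-)$ does as well.

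The crux of \ref{itm:accb} $\Rightarrow$ \ref{itm:accc} will be choosing, at the outset, a single regular cardinal $\mu$ satisfying: (i) $\mu\geq\lambda$, (ii) both $\C$ and $\D$ are $\mu$-accessible, and (iii) $\Hom_\D(D,F-)$ preserves $\mu$-directed colimits for every isomorphism class of $D\in\Presl\D$. Given such a $\mu$ and a $\mu$-directed colimit $M=\int^{i}M_{i}$ in $\C$, I would form the canonical comparison morphism $\int^{i}FM_{i}\to FM$ in $\D$ and verify, for every $D\in\Presl\D$, that applying $\Hom_\D(D,-)$ sends both sides to $\int^{i}\Hom_\D(D,FM_{i})$ --- the left side because $D$ is $\mu$-presentable, the right side because $\Hom_\D(D,F-)$ is $\mu$-accessible by (iii). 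Full faithfulness of the restricted Yoneda embedding would then promote this to an isomorphism in $\D$, showing that $F$ preserves $\mu$-directed colimits.

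The main obstacle is really only the set-theoretic bookkeeping behind the choice of $\mu$: it must simultaneously exceed the accessibility ranks of $\C$ and $\D$ (which requires $\mu\triangleright\lambda$ in the technical sense of \cite{ar}) and dominate all of the accessibility indices of the functors $\Hom_\D(D,F-)$ as $D$ ranges over $\Presl\D$. This is routine once one observes that $\Presl\D$ is essentially small, so that one is really taking a supremum over a \emph{set} of indices; the rest of the argument is a standard Yoneda computation.
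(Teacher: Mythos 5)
Your proposal is correct and is essentially the argument the paper itself relies on: the paper simply defers to the proof of \cite[Corollary 2.22]{ar}, which is exactly your combination of a uniformization step (choosing one regular $\mu\geq\lambda$, with $\mu\triangleright$ the relevant indices, so that $\C$ and $\D$ are $\mu$-accessible and all $\Hom_\D(D,F-)$ with $D\in\Presl\D$ preserve $\mu$-directed colimits) followed by testing the colimit comparison map against $\lambda$-presentables and using that the restricted Yoneda embedding $\D\to[\Presl\D\op,\Set]$ is fully faithful, hence reflects isomorphisms. The only cosmetic point is that in the easy direction you should also choose $\mu$ so that $\C$ is $\mu$-accessible (as you do implicitly in the hard direction), since accessibility of $\Hom_\D(D,F-)$ requires an index at which its domain is accessible.
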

\begin{proof}
    The proof of~\cite[Corollary 2.22]{ar} suffices.
    We have simply inferred a sharper result.
\end{proof}
\begin{prop}[{\cite[Examples 2.17 (2)]{ar}}]
    \thlabel{accsmall}A functor $F:\C\to\Set$ is accessible if and only if it is small.
\end{prop}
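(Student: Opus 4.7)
The plan is to prove the two directions separately, using the theorem characterising small functors as left Kan extensions of small functors, and the defining property of $\lambda$-accessible categories that every object is a $\lambda$-directed colimit of $\lambda$-presented ones.

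For the forward direction, suppose $F:\C\to\Set$ is $\lambda$-accessible. By definition $\C$ is $\lambda$-accessible, so $\Presl\C$ is essentially small and every $C\in\C$ can be written as a $\lambda$-directed colimit $C=\int^{i\in I}C_i$ of objects $C_i\in\Presl\C$. Since $F$ preserves $\lambda$-directed colimits, I get $FC\simeq\int^{i\in I}FC_i$, which expresses $F$ as the left Kan extension of its restriction $F|_{\Presl\C}$ along the inclusion $\Presl\C\hookrightarrow\C$. Picking a small skeleton of $\Presl\C$ and invoking the preceding theorem (specifically $\ref{itm:smalla}\Rightarrow\ref{itm:smalld}$) then yields that $F$ is small.

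For the converse, suppose $F$ is small, so that
\[F\simeq \int^{J\in\J\op}\Hom_\C(DJ,-)\]
for some small $\J$ and functor $D:\J\to\C$. I will choose a single regular cardinal $\mu$ simultaneously satisfying three things: $\C$ is $\mu$-accessible, each $DJ$ (for $J\in\J$) is $\mu$-presented in $\C$, and $|\J|<\mu$. Then each representable $\Hom_\C(DJ,-)$ preserves $\mu$-directed colimits by definition of $\mu$-presented, and $F$, being a $\mu$-small colimit of $\mu$-accessible functors into $\Set$, also preserves $\mu$-directed colimits since in $\Set$ these commute with $\mu$-small colimits. Hence $F$ is $\mu$-accessible.

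The main obstacle is the cardinal arithmetic in the converse: guaranteeing the existence of a regular $\mu$ meeting all three conditions simultaneously. This is handled by the standard machinery of sharpness (and the fact that the set of arrows between regular cardinals with $\mu\triangleright\lambda$ is cofinal in the class of regular cardinals), so that $\C$ remains accessible relative to arbitrarily large $\mu$, while the essentially-small set of objects $\{DJ\mid J\in\J\}$ together with the small category $\J$ impose only set-many bounds.
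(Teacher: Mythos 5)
The paper gives no proof of this proposition at all: it is quoted verbatim from Ad\'amek--Rosick\'y (Examples 2.17(2)), so there is no internal argument to compare against. Your proposal is correct, and it is in substance the standard argument behind the cited result. The forward direction is fine: a $\lambda$-accessible $F$ is the left Kan extension of its restriction to a small skeleton of $\Presl\C$ (strictly, the Kan-extension colimit is taken over the canonical comma category $\Presl\C\downarrow C$, which is $\lambda$-filtered, and one passes freely between $\lambda$-directed and $\lambda$-filtered colimits), and then the characterisation of small functors as left Kan extensions off a small subcategory applies. The converse is where the genuine content lies, and you have located it correctly: one needs a single regular $\mu$ with $\lambda\triangleleft\mu$ so that $\C$ stays $\mu$-accessible (such $\mu$ exist arbitrarily large), with each $DJ$ $\mu$-presentable (presentability ranks only increase, and there are only set-many $DJ$), and with $\J$ having fewer than $\mu$ \emph{morphisms} (not merely objects), so that $F$ is a $\mu$-small colimit of $\mu$-accessible representables and hence preserves $\mu$-directed colimits, since $\mu$-small colimits commute with $\mu$-filtered colimits in $\Set$. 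Modulo that small precision about the size of $\J$, your argument is exactly the one the cited source gives, so the proposal stands.
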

\thref{lambdapresacc} and \thref{accsmall} justify the following definition.
\begin{defn}
    For accessible categories $\C$ and $\D$, we write $(\C,\D)$ to denote the \emph{category of accessible functors} $\C\to\D$.

    Remark: Supposing that $\D$ is $\lambda$-accessible, the legitimacy of this category is established by giving it the formal definition as the full subcategory subcategory $(\C,\D)\subseteq(\Presl\D\op,(\C,\Set))$ consisting of functors $\Phi:\Presl\D\op\to(\C,\Set)$ such that the composition $\ev C\ \cdot\ \Phi:\Presl\D\op\to\Set$ is representable for each evaluation functor $\ev C:(\C,\Set)\to\Set$ ($C\in\C$). We should not much care that a particular index of accessibility $\lambda$ for $\D$ is used, as the resulting category $(\C,\D)$ is invariant under such choices.
\end{defn}
\begin{defn}
    Let $\C$ be an accessible category.
    A morphism $f:A\to B\in \C$ is said to be \emph{$\lambda$-pure} (or a \emph{$\lambda$-pure monomorphism}) if for any $g:C\to D\in\Presl\C$ and any $u:C\to A\in\C$,
    \begin{displaymath}
        fu\text{ factors through }g\ \Longrightarrow \ u\text{ factors through }g.
    \end{displaymath}
    We say that a full subcategory $\D\subseteq \C$ is \emph{closed under $\lambda$-pure subobjects} if, for any $\lambda$-pure $f:A\to B\in\C$, if $B\in\D$ then $A\in\D$.

    When $\lambda=\omega$, we say \emph{pure}, \emph{pure monomorphism} or \emph{closed under pure subobjects}, respectfully.

\end{defn}
\begin{defn}
    A \emph{$\lambda$-definable subcategory} $\D\subseteq\C$ of a locally $\lambda$-presentable category $\C$ is a full subcategory which is closed under products, $\lambda$-directed colimits, and $\lambda$-pure subobjects.
    A \emph{definable subcategory} of a finitely accessible category is an $\omega$-definable subcategory.
\end{defn}
\begin{rmk}
    Definable subcategories of module categories were originally introduced in the model theory of modules.
    In~\cite{kr} they are considered outside of the additive context.
\end{rmk}
\begin{thm}[{\cite[Corollary 2.32]{ar}} with an easy modification]
    \thlabel{mesharpen}
    Every $\lambda$-accessible category $\C$ is equivalent to a subcategory of $(\J,\Set)$ closed under $\lambda$-directed colimits and $\lambda$-pure subobjects, where $\J=(\Presl \C)\op$.
\end{thm}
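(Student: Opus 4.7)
The plan is to exhibit the equivalence via the restricted Yoneda embedding
\[
E:\C\to(\J,\Set),\qquad C\mapsto \Hom_\C(-,C)|_{\Presl\C},
\]
where $\J=\Presl\C\op$. Since $\Presl\C$ is essentially small, $E(C)$ is automatically a small functor (its category of elements being essentially small), so $E$ lands in $(\J,\Set)$ without any size concern; this is what distinguishes the statement from~\cite[Corollary 2.32]{ar} and is the source of the word \emph{easy} in the attribution.

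First I would check that $E$ is fully faithful and preserves $\lambda$-directed colimits. For the first part, each $C\in\C$ can be written as a $\lambda$-directed colimit $C\simeq \int^{i\in\I}D_i$ of $\lambda$-presented objects, and both $\Hom_\C(C,C')$ and $\Hom_{(\J,\Set)}(E(C),E(C'))$ are computed as the limit $\int_{i\in\I}\Hom_\C(D_i,C')$, the latter by combining Yoneda on $\J$ with the fact that $E(C)=\int^{i\in\I\op}\Hom_\C(-,D_i)$ in $(\J,\Set)$. For the second part, colimits in $(\J,\Set)$ are pointwise and, for every $D\in\Presl\C$, $\Hom_\C(D,-)$ preserves $\lambda$-directed colimits by the very definition of $\lambda$-presented. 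It follows at once that the essential image of $E$ is closed under $\lambda$-directed colimits in $(\J,\Set)$, since a $\lambda$-directed colimit of $E(C_i)$ in $(\J,\Set)$ agrees with $E$ of the $\lambda$-directed colimit of the $C_i$ in $\C$.

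The hard part, and the step doing the real work, is closure of the essential image under $\lambda$-pure subobjects in $(\J,\Set)$. I would identify the essential image of $E$ with the $\lambda$-directed colimit closure of the representables in $(\J,\Set)$ (equivalently, the $\lambda$-flat functors on $\Presl\C\op$), which is the content of the accessibility of $\C$. Given a $\lambda$-pure monomorphism $F\hookrightarrow E(C)$ in $(\J,\Set)$, the aim is to show that $F$ is itself a $\lambda$-directed colimit of representables. One writes $F\simeq \int^{(A,x)\in\E_F\op}\Hom_\J(A,-)$ via \thref{cancol}, and uses the $\lambda$-purity of $F\hookrightarrow E(C)$ together with the presentation of $E(C)$ as a $\lambda$-directed colimit of $\lambda$-presented representables to factor each structure map of this presentation through a representable in the image; collating these factorisations and cofinality (via \thref{smallcofinal}) shows that $\E_F$ admits a final $\lambda$-directed subdiagram of representables, so $F\cong E(C')$ for some $C'\in\C$. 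This last diagram-chase is the only nontrivial ingredient, but it is precisely the adaptation of~\cite[Corollary 2.32]{ar} to the present setting.
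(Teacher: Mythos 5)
Your proposal is correct and follows essentially the same route as the paper: both use the restricted Yoneda embedding $EC=\Hom_\C(-,C)|_{\Presl\C}$, observe full faithfulness and closure of the image under $\lambda$-directed colimits, and prove closure under $\lambda$-pure subobjects by writing the subobject as its canonical colimit of small pieces, factoring their composites into $EC$ through $\lambda$-presented objects of the image, lifting these factorisations back along the $\lambda$-pure monomorphism, and finishing with a cofinality/$\lambda$-filteredness argument. The only cosmetic difference is that you phrase the canonical pieces and the factoring objects as representables, while the paper speaks of finitely presented objects of $(\J,\Set)$ and $\lambda$-presented objects of the embedded copy of $\C$, which amounts to the same thing.
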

\begin{proof}
    The proof is much the same as that of {\cite[Corollary 2.32]{ar}}, with some stronger observations.
    There is a fully faithful functor
    \begin{displaymath}
        E:\C\to(\J,\Set)
    \end{displaymath}
    given by $EC=\Hom_\C(-,C)|_\J$ for any $C\in\C$.
    If $C\in\Presl\C$ then $EC=\Hom_{\J}(C,-)$ is finitely presented.
    Let $\D$ be the image of $\C$ under this embedding.
    Since $\D$ is clearly closed under $\lambda$-directed colimits, we need only prove it is closed under $\lambda$-pure subobjects.

    Suppose $f:A\to B$ is a pure-monomorphism where $B\in\D$.
    The object $A$ is a canonical colimit $(u_i:A_i\to A)_{i\in I}$ of finitely presented (hence $\lambda$-presented) objects $A_i\in(\J,\Set)$.
    Since $\D$ is $\lambda$-accessible, for each $i\in I$ there is a $\lambda$-presented $B_i\in\D$ (and hence $B_i$ is a finitely presented in $(\J,\Set)$) and a commutative square
    \begin{displaymath}
        \xymatrix{A_i\ar[d]_{u_i}\ar[r]^{f_i}&B_i\ar[d]^{v_i}\\
        A\ar[r]_f&B}
    \end{displaymath}
    in $(\J,\Set)$.
    Since $f$ is $\lambda$-pure (it is $\omega$-pure), there is a morphism $\overline u_i:B_i\to A$ such that $\overline u_i f_i=u_i$, for each $i\in I$.
    It follows that the $\overline u_i$ form a cofinal subdiagram of the canonical colimit of $A$, and is therefore $\lambda$-filtered.
    Therefore $A$ is a $\lambda$-filtered colimit of objects $B_i\in \D$, so $A\in\D$.
\end{proof}
\begin{thm}[{\cite[Corollary 2.36]{ar}}]
    \thlabel{hard} Let $\C$ be an accessible category, and let $\D$ be an accessibly embedded subcategory of $\C$.
    Then $\D$ is accessible if and only if it is closed under $\lambda$-pure subobjects for some regular cardinal $\lambda$.
\end{thm}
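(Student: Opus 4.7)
The plan is to treat the two directions separately. The forward direction (accessible plus accessibly embedded implies closed under $\lambda$-pure subobjects for some $\lambda$) is essentially a repeat of the argument used in the proof of \thref{mesharpen}, while the backward direction is the substantive one and requires a L\"owenheim--Skolem style construction.

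For the forward direction, I would choose $\lambda$ sufficiently large that $\C$ is $\lambda$-accessible, $\D$ is $\lambda$-accessible, and the inclusion $\D\hookrightarrow\C$ preserves $\lambda$-directed colimits; the accessible embedding hypothesis makes this possible. Given a $\lambda$-pure monomorphism $f:A\to B$ in $\C$ with $B\in\D$, I would write $A$ as its canonical $\lambda$-directed colimit $(u_i:A_i\to A)_{i\in I}$ of $\lambda$-presented objects of $\C$. For each $i$, the composite $fu_i:A_i\to B$ factors through some $\lambda$-presented $B_i\in\D$ via a morphism $f_i:A_i\to B_i$ (using the $\lambda$-accessibility of $\D$). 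Since $A_i,B_i\in\Presl\C$ and $f$ is $\lambda$-pure, the factorization of $fu_i$ through $f_i$ lifts to $\overline{u}_i:B_i\to A$ with $\overline{u}_i f_i=u_i$. These $\overline u_i$ form a cofinal $\lambda$-directed subdiagram of the canonical colimit of $A$, so $A$ is a $\lambda$-directed colimit of objects of $\D$, hence $A\in\D$.

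For the backward direction, suppose $\D$ is accessibly embedded in $\C$ and closed under $\lambda$-pure subobjects. Enlarging $\lambda$ if necessary, I may assume $\C$ is $\lambda$-accessible and the embedding preserves $\lambda$-directed colimits. The strategy is to find some $\mu\geq\lambda$ such that every $D\in\D$ is a $\mu$-directed colimit of $\mu$-presented objects of $\D$, which then exhibits $\D$ as $\mu$-accessible. The key technical step is the following: for a suitably large $\mu$, every morphism $X\to D$ with $X\in\Presm\C$ and $D\in\D$ factors through a $\mu$-presented $\lambda$-pure subobject $D'\hookrightarrow D$ of $D$ (which then lies in $\D$ by the closure assumption). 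Assembling such factorizations for varying $X$ gives the required $\mu$-directed presentation of $D$.

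The main obstacle is the construction of the $\mu$-presented $\lambda$-pure subobject $D'$. This is built by a transfinite iteration: starting from a $\mu$-presented subobject of $D$ containing the image of $X$, one repeatedly adjoins witnesses for all $\lambda$-pure lifting conditions that $\lambda$-presented test diagrams impose. The iteration runs for fewer than $\mu$ stages, and the delicate point is choosing $\mu$ large enough relative to $\lambda$ and the presentability bounds in $\C$ (AR's ``sharply less than'' relation $\lambda \lhd \mu$) to guarantee that the resulting colimit remains $\mu$-presented and that the inclusion $D'\hookrightarrow D$ is $\lambda$-pure. This cardinal-arithmetic bookkeeping is the technical heart of the argument and is where I would rely most heavily on the machinery of~\cite{ar}.
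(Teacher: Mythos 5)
The paper itself gives no proof of \thref{hard}: it is imported verbatim from \cite[Corollary 2.36]{ar}, and the author even remarks later that the ``subtle combinatorics'' have been relocated to its proof. So the only comparison available is with the standard argument in \cite{ar}, which your outline does track. Within that outline there is one concrete gap, in the direction you call easy. You choose $\lambda$ so that $\C$ and $\D$ are $\lambda$-accessible and the inclusion preserves $\lambda$-directed colimits, and then assert $A_i,B_i\in\Presl\C$. But $B_i$ was produced as a $\lambda$-presented object of $\D$, and an object $\lambda$-presented in an accessibly embedded subcategory need not be $\lambda$-presented in the ambient category; since $\lambda$-purity of $f$ is tested only against morphisms $g:C\to D$ with $C,D\in\Presl\C$, you cannot apply it to $f_i:A_i\to B_i$ as written. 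The repair is the uniformization theorem \cite[Theorem 2.19]{ar}: there are arbitrarily large regular cardinals $\lambda$ for which the inclusion $\D\hookrightarrow\C$ is $\lambda$-accessible \emph{and} preserves $\lambda$-presentable objects, and since the conclusion only asks for closure under $\lambda$-pure subobjects for \emph{some} $\lambda$, you are free to enlarge $\lambda$ accordingly. (In \thref{mesharpen} this issue does not arise, because there the $\lambda$-presented objects of the embedded copy of $\C$ are representables, hence finitely presented in $(\J,\Set)$; your argument silently borrows that special feature.)

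For the backward direction, what you have written is a plan rather than a proof: the assertion that, for suitable $\mu$ sharply above $\lambda$, every morphism from a $\mu$-presented object into $D$ factors through a $\mu$-presented $\lambda$-pure subobject of $D$ \emph{is} the technical theorem of \cite{ar} from which Corollary 2.36 is deduced, and your sketch of the transfinite closure construction does not establish it -- it names the difficulty (the $\lambda\lhd\mu$ bookkeeping) without resolving it. Deferring that step to \cite{ar} is entirely consistent with the paper, which cites the whole corollary and nothing less, but then you should import the precise statement rather than gesture at its proof, and record the remaining routine points: for $\mu$ large enough $\D$ is closed under $\mu$-directed colimits in $\C$, so objects of $\D$ that are $\mu$-presented in $\C$ are $\mu$-presented in $\D$; the $\lambda$-pure $\mu$-presented subobjects of $D$ lie in $\D$ by hypothesis and form a $\mu$-directed (cofinal) diagram with colimit $D$; and $\C$ has only a set of $\mu$-presented objects up to isomorphism, so $\D$ is $\mu$-accessible. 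As it stands, the proposal proves no more than the citation already gives, so treat it as a correct reconstruction of the intended route with the two caveats above.
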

\begin{defn}
    Let $\M$ be a class of morphisms in an accessible category $\C$.
    We say that $A\in\C$ is \emph{$\M$-injective} if every morphism $A\to B$ in $\M$ induces a surjective function $\Hom_\C(B,M)\to\Hom_\C(A,M)$.
    A \emph{small-injectivity class} in $\C$ is a full subcategory whose objects are the $\M$-injective objects for some set of maps $\M$; it is a \emph{$\lambda$-injectivity class} if the morphisms in $\M$ are between $\lambda$-presentable objects.
\end{defn}
\begin{thm}[{\cite[Theorem 4.8]{ar}}]
    \thlabel{wkref} Let $\D$ be a full subcategory of a locally presentable category $\C$.
    Then the following conditions are equivalent.
    \begin{enumerate}
        \item\label{itm:defa} $\D$ is a small-injectivity class in $\C$.
        \item\label{itm:defb} $\D$ is accessible, accessibly embedded, and closed under products in $\C$.
        \item $\D$ is weakly reflective and accessibly embedded in $\C$.
    \end{enumerate}
\end{thm}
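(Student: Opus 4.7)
The plan is to cycle through the three implications $\ref{itm:defa}\Rightarrow\ref{itm:defb}\Rightarrow(c)\Rightarrow\ref{itm:defa}$, writing $(c)$ for the third condition (weakly reflective and accessibly embedded).

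For $\ref{itm:defa}\Rightarrow\ref{itm:defb}$: Write $\D$ as the class of $\M$-injective objects for a set $\M$. Closure under products is immediate from $\Hom_\C(-,\prod_i D_i)\simeq\prod_i\Hom_\C(-,D_i)$, which turns the surjectivity of each $\Hom_\C(B,D_i)\to\Hom_\C(A,D_i)$ into surjectivity over $\prod_i D_i$. Choose $\lambda$ large enough that $\C$ is locally $\lambda$-presentable and every morphism in $\M$ is between $\lambda$-presentable objects. Closure under $\lambda$-directed colimits then follows by a direct factorization argument using the $\lambda$-presentability of the domains of morphisms in $\M$. Accessibility of $\D$ itself is the subtler point: I would argue via a L\"owenheim--Skolem-style construction that every $D\in\D$ is a $\lambda$-directed colimit of $\lambda$-presentable subobjects which themselves lie in $\D$, obtained by enlarging any chosen $\lambda$-presentable subobject to absorb the finitely many lifting demands imposed by $\M$ at each stage.

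For $\ref{itm:defb}\Rightarrow(c)$: To construct a weak reflection of a given $C\in\C$ into $\D$, choose $\mu$ large enough that $C$ is $\mu$-presentable, $\D$ is $\mu$-accessible, and the inclusion $\D\hookrightarrow\C$ preserves $\mu$-directed colimits. Then the restriction $G:=\Hom_\C(C,-)\vert_\D:\D\to\Set$ is $\mu$-accessible and preserves products (both ingredients do). Apply the ``weak representability'' refinement of Freyd's theorem alluded to in the introduction: $\mu$-accessibility controls the size of the category of elements $\E_G$, and product-preservation supplies a weakly initial object $(R_C,\eta_C)\in\E_G$. This $\eta_C:C\to R_C$ is the desired weak reflection.

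For $(c)\Rightarrow\ref{itm:defa}$: Fix $\lambda$ such that the inclusion $\D\hookrightarrow\C$ preserves $\lambda$-directed colimits. For each $C\in\Presl\C$ (a set up to isomorphism), select a weak reflection $\eta_C:C\to R_C$ and set $\M:=\{\eta_C\}$. That $\D$ is contained in the $\M$-injectivity class is immediate from the defining universal property of each $\eta_C$. Conversely, if $X$ is $\M$-injective, apply the small object argument to $X\to 1$ relative to $\M$: $\M$-injectivity of $X$ allows each pushout stage to be mapped back compatibly to $X$, producing a retraction of the canonical $X\to X'$ where $X'$ is $\M$-injective and hence in $\D$. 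So $X$ is a retract of $X'\in\D$. Weakly reflective accessibly embedded subcategories are closed under retracts (factor an inclusion $A\to B$ with $B\in\D$ through $\eta_A$ to exhibit $\eta_A$ as a split monomorphism, then split the resulting idempotent on $R_A$ inside the accessible category $\C$), so $X\in\D$.

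The main obstacle is the weak representability step in $\ref{itm:defb}\Rightarrow(c)$: converting an accessible, product-preserving functor $\D\to\Set$ into a weakly initial object of its category of elements. This is precisely the ``weak representability criterion'' the introduction identifies as a central tool developed in Section~\ref{subsec:smallfuncats}, and is morally a careful implementation of Freyd's representability theorem adapted to the accessible setting. Once it is in hand, the remaining arguments are largely bookkeeping about presentability ranks and retract closure.
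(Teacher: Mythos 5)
The paper itself offers no proof of this statement --- it is quoted directly from Ad\'amek--Rosick\'y --- so the comparison is with the argument you would actually need, and two of your three implications have genuine gaps. In (a)$\Rightarrow$(b), closure under products and under $\lambda$-directed colimits are fine, but the accessibility of $\D$ is precisely the hard content, and your L\"owenheim--Skolem sketch is false as stated: you keep the same cardinal $\lambda$ for which the morphisms of $\M$ lie between $\lambda$-presentable objects and claim every $D\in\D$ is a $\lambda$-directed colimit of $\lambda$-presentable subobjects that lie in $\D$. The paper's own example refutes this: divisible abelian groups form an $\omega$-injectivity class in $\Ab$ (injectivity with respect to the multiplication maps $\ZZ\to\ZZ$, morphisms between finitely presentable objects), yet by Breitsprecher's result they contain no nonzero finitely presented objects and are not finitely accessible; the index of accessibility genuinely has to be raised. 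Moreover the ``finitely many lifting demands'' at each stage are really indexed by hom-sets, which need not have cardinality $<\lambda$, and objects of a locally presentable category need not be $\lambda$-directed colimits of $\lambda$-presentable \emph{subobjects} at all. The standard repair is to note that an injectivity class for morphisms between $\lambda$-presentable objects is closed under $\lambda$-pure subobjects as well as $\lambda$-directed colimits and then invoke \thref{hard}; that is exactly where the paper says the subtle combinatorics have been relocated, and it cannot be waved away.

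Your (b)$\Rightarrow$(c) is essentially sound: for large enough $\mu$ the functor $\Hom_\C(C,-)|_\D$ is $\mu$-accessible and product-preserving, its category of elements is accessible and has products, hence has a weakly initial object (the product of a weakly initial set), and such an object is a weak reflection. But (c)$\Rightarrow$(a) is circular at the decisive point: the small object argument produces an $\M$-injective $X'$, and you conclude it is ``$\M$-injective and hence in $\D$'' --- but ``$\M$-injective $\Rightarrow$ in $\D$'' is exactly the inclusion being proved, and nothing in the construction (pushouts of coproducts of the $\eta_C$ and a transfinite composite) keeps the stages, or $X'$, inside $\D$. The retract-closure step is also misjustified: splitting the idempotent on $R_A$ ``inside $\C$'' only recovers $A$ up to isomorphism and does not place it in $\D$; what does work is to realise the splitting as the colimit of the $\lambda$-directed chain of copies of $R_A$ with every transition map the idempotent, which lies in $\D$ because $\D$ is accessibly embedded. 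Even with that fix, the substantive obstruction remains: weak reflections are not functorial, so the chosen maps $R_{C_i}\to X$ do not assemble into a $\lambda$-directed diagram exhibiting $X\in\D$, and you may not appeal to accessibility of $\D$ here since in your cycle only (c) is available. As it stands, both (a)$\Rightarrow$(b) and (c)$\Rightarrow$(a) are missing their key ideas, which is where the weight of Ad\'amek--Rosick\'y's Theorem 4.8 actually lies.
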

\begin{thm}[{\cite[Characterisation Theorem 4.11]{ar}}]
    \thlabel{char} The following conditions on a category $\C$ are equivalent:
    \begin{enumerate}
        \item $\C$ is accessible and weakly cocomplete.
        \item $\C$ is accessible and has products.
        \item $\C$ is equivalent to a weakly reflective, accessibly embedded subcategory of $(\J,\Set)$ for some some small category $\J$.
        \item $\C$ is equivalent to a small-injectivity class in a locally presentable category.
    \end{enumerate}
\end{thm}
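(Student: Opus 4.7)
The plan is to run the cycle $(b)\implies(c)\implies(d)\implies(b)$ using the machinery already developed in this section, and to dispose of the equivalence $(a)\iff(b)$ by invoking the corresponding classical result from~\cite{ar}. Throughout, fix a regular cardinal $\lambda$ for which $\C$ is $\lambda$-accessible.

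For $(b)\implies(c)$, I would apply \thref{mesharpen} to embed $\C$ into $(\J,\Set)$ with $\J=(\Presl\C)\op$ as a subcategory closed under $\lambda$-directed colimits and $\lambda$-pure subobjects, via $C\mapsto\Hom_\C(-,C)|_\J$. Since this embedding is computed by corepresentable hom-functors, it preserves products, so the image is also closed under products in $(\J,\Set)$. Closure under $\lambda$-pure subobjects together with \thref{hard} gives that the embedding is accessible, so the image satisfies condition (b) of \thref{wkref}, and hence its condition (c): weakly reflective and accessibly embedded, which is condition (c) of the theorem.

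For $(c)\implies(d)$, because $\J$ is small, $(\J,\Set)$ coincides with the full presheaf category and is therefore locally presentable, so \thref{wkref} turns condition (c) (weakly reflective and accessibly embedded) into condition (a) (small-injectivity class), which is condition (d) of the theorem. For $(d)\implies(b)$, \thref{wkref} exhibits a small-injectivity class in a locally presentable category as accessible, accessibly embedded, and closed under products. Since locally presentable categories are complete, closure under products in the ambient category produces products in $\C$ itself, whence condition (b) holds.

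The hard part is the equivalence $(a)\iff(b)$, which I would simply lift from~\cite[Theorem~4.11]{ar}; this is really the substantive content of the theorem and is purely category-theoretic, not specific to the additive setting considered elsewhere in the paper. The underlying ideas are: for $(b)\implies(a)$, in an accessible category with products, a weak colimit of a small diagram $D$ may be realised as a product indexed by a set of ``test cocones'' whose apices are $\mu$-presentable for $\mu$ large enough to see $D$, with accessibility guaranteeing that this collection is indeed small; conversely for $(a)\implies(b)$, products are assembled from weak colimits via a weak adjoint-functor-type argument. I would not reproduce these arguments here.
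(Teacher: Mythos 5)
Your proposal is correct, but it is worth saying up front that the paper offers no proof of \thref{char} at all: it is quoted verbatim as \cite[Characterisation Theorem 4.11]{ar}, exactly like \thref{wkref} and \thref{hard}. So the comparison is between a pure citation and your partial re-derivation. Your cycle $(b)\Rightarrow(c)\Rightarrow(d)\Rightarrow(b)$ is sound and is in fact close in spirit to how the source proves the theorem: the embedding of \thref{mesharpen} does preserve products (hom-functors in the covariant variable, with products computed pointwise in $(\J,\Set)$), $(\J,\Set)$ for small $\J$ is the full presheaf category and hence locally presentable, and then \thref{wkref} shuttles you around the remaining conditions; the step $(d)\Rightarrow(b)$ via completeness of the ambient locally presentable category is also fine. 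Two small remarks. First, your appeal to \thref{hard} is redundant: the image of the embedding is equivalent to $\C$ and hence accessible by hypothesis, and what \thref{wkref} actually needs is that it is accessibly embedded, which is exactly the closure under $\lambda$-directed colimits already supplied by \thref{mesharpen}. Second, your treatment of $(a)\Leftrightarrow(b)$ is a citation of the very theorem being proved, which is legitimate only in the sense that it matches what the paper itself does; as a self-contained argument it is the missing piece. Your sketch of $(b)\Rightarrow(a)$ (weak colimit as a product over a set of test cocones with $\mu$-presentable vertices) is a correct argument, but the converse in \cite{ar} does not build products directly by a weak-adjoint-functor construction: it uses weak colimits to construct weak reflections of presheaves along the canonical embedding, i.e.\ proves $(a)\Rightarrow(c)$, and then extracts products from \thref{wkref} (small-injectivity classes are closed under products). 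So if you wanted to make the proof genuinely self-contained, the cleanest completion is $(a)\Rightarrow(c)$ by weak reflections plus your existing cycle, rather than the direct route your last sentence gestures at.
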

\begin{defn}
    \thlabel{definabledef}
    We say that an additive category $\D$ is \emph{$\lambda$-definable} if $\D\simeq\D'$ for some $\lambda$-definable subcategory $\D'$ of $\R\Mod$ for some preadditive category $\R$ (or, equivalently, a $\lambda$-definable cubcategory $\D'$ of some locally $\lambda$-presentable additive category).
\end{defn}
Many results in the theory of accessible categories carry with them the question of whether they can be sharpened.
For example, the equivalence $\ref{itm:defa}\Leftrightarrow\ref{itm:defb}$ in \thref{wkref} has since been sharpened to \thref{inj} below.
\begin{thm}[{\cite[Theorem 2.2]{arb}}]
    \thlabel{inj}
    Let $\C$ be a locally $\lambda$-presentable category.
    A full subcategory $\D\subseteq \C$ is a $\lambda$-injectivity class in $\C$ if and only if it is it is a $\lambda$-definable subcategory of $\C$.
\end{thm}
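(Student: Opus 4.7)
The plan is to prove the two implications separately. The forward direction, that a $\lambda$-injectivity class is $\lambda$-definable, is a routine verification of the three closure properties. The converse is the substantive part of the theorem and genuinely refines \thref{wkref}.

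For the forward direction, suppose $\D$ is the class of $\M$-injectives for a set $\M$ of morphisms $g:C\to D$ with $C,D\in\Presl\C$. Closure under products is immediate from the universal property, since $\Hom_\C(-,\prod A_\alpha)\simeq \prod\Hom_\C(-,A_\alpha)$ turns a family of componentwise surjections into a surjection. Closure under $\lambda$-directed colimits holds because both $\Hom_\C(C,-)$ and $\Hom_\C(D,-)$ preserve $\lambda$-directed colimits, and a $\lambda$-directed colimit of surjections in $\Set$ is a surjection. Closure under $\lambda$-pure subobjects: given a $\lambda$-pure monomorphism $f:A\to B$ with $B\in\D$ and any $u:C\to A$ with $g:C\to D\in\M$, the composite $fu$ factors through $g$ by $\M$-injectivity of $B$, and $\lambda$-purity of $f$ then guarantees $u$ itself factors through $g$, so $A\in\D$.

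For the converse, closure under $\lambda$-directed colimits ensures $\D$ is accessibly embedded in $\C$, and by \thref{hard} closure under $\lambda$-pure subobjects ensures $\D$ is accessible. Together with closure under products, \thref{wkref} produces a set $\M_0$ of morphisms in $\C$ such that $\D$ equals the class of $\M_0$-injectives, but a priori the morphisms of $\M_0$ need not have $\lambda$-presented domain and codomain. I would take $\M$ to be the set of all morphisms $g:C\to D$ with $C,D\in\Presl\C$ for which every object of $\D$ is $g$-injective. The inclusion of $\D$ in the class of $\M$-injectives is then tautological, so the whole content of the theorem lies in the reverse inclusion.

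The main obstacle is showing that every $\M$-injective object lies in $\D$. My strategy is to realise each $\M$-injective $X$ as a $\lambda$-pure subobject of an object of $\D$: the weak reflection $\eta_X:X\to RX$ provided by \thref{wkref} satisfies $RX\in\D$, so if $\eta_X$ can be shown to be a $\lambda$-pure monomorphism then $X\in\D$ by closure of $\D$ under $\lambda$-pure subobjects. To verify $\lambda$-purity of $\eta_X$, one takes $u:C\to X$ with $C\in\Presl\C$ and supposes $\eta_X u$ factors through some $g:C\to D\in\Presl\C$, and must produce a factorisation of $u$ through $g$. The key technical step is to exploit the local $\lambda$-presentability of $\C$ to convert any putative failure of such a factorisation into a morphism between $\lambda$-presented objects along which every object of $\D$ lifts but $X$ does not; such a morphism would lie in $\M$ and contradict $\M$-injectivity of $X$. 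I expect the coherence of the $\lambda$-presented approximations, and matching them with weak reflections coherently, to be the chief source of subtlety.
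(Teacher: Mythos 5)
Your forward direction is correct and complete, and your reduction of the converse is set up the right way; but the converse contains a genuine gap, and it sits exactly at the point you yourself flag as ``the key technical step''. After invoking \thref{hard} and \thref{wkref} to get that $\D$ is accessible, accessibly embedded, weakly reflective and a small-injectivity class, and after defining $\M$ to be the set of morphisms between $\lambda$-presented objects with respect to which every object of $\D$ is injective, all you have actually established is the tautological inclusion of $\D$ into the $\M$-injectives. The reverse inclusion --- equivalently, your claim that the weak reflection $\eta_X:X\to RX$ of an $\M$-injective $X$ is a $\lambda$-pure monomorphism --- is announced as a strategy but never argued. Note that this reformulation is not a reduction in difficulty: if the theorem holds, then $X\in\D$ forces $\eta_X$ to split (factor $1_X$ through $\eta_X$) and split monomorphisms are trivially $\lambda$-pure, so purity of $\eta_X$ is essentially equivalent to the statement being proved. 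Moreover, the morphisms supplied by \thref{wkref} are weak reflection maps whose codomains lie in $\D$ and are in general not $\lambda$-presented, so no cheap ``restriction to $\Presl\C$'' of that solution set is available; the passage from a small-injectivity class to a $\lambda$-injectivity class is precisely what makes \thref{inj} a sharpening.

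Concretely, to verify $\lambda$-purity of $\eta_X$ you must show that if $u:C\to X$ with $C\in\Presl\C$ does not factor through $g:C\to D\in\Presl\C$, then $\eta_X u$ does not factor through $g$ either. The natural move --- push out $g$ along $u$ and compare with $RX$ --- yields a morphism $X\to X\cup_C D$ which is a pushout of $g$, but this is not a morphism between $\lambda$-presented objects, and there is no reason it belongs to $\M$: in the purity check $g$ ranges over \emph{all} morphisms of $\Presl\C$, not only those under which $\D$ lifts, so $\M$-injectivity of $X$ cannot be applied to it directly. Manufacturing from such a configuration a morphism between $\lambda$-presented objects that lies in $\M$ yet admits no lift from $X$ requires a genuine approximation argument (a directed or transfinite construction interleaving $\lambda$-presented subobjects with weak reflections), and that construction is the actual content of \cite[Theorem 2.2]{arb}; your proposal asserts its existence rather than carrying it out. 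For comparison: the paper itself gives no proof of \thref{inj} --- it imports the result, and the remark after \thref{deffp} explicitly says that the combinatorial weight of the subject has been relocated to such cited results (here and in \thref{hard}) --- so your write-up reproduces the easy direction and the correct skeleton of the hard one, but the core of the hard direction is missing.
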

Counterexamples exist against various other such sharpening results.
Although all categories which are $\lambda$-definable for some $\lambda$ are accessible categories (see \thref{ldef} for $\lambda$-definable), this result cannot be sharpened.
For example, the category $\D$ of divisible abelian groups is a definable subcategory of $\Ab$, but, by the result~\cite[Korollar 1.8]{breit1970} of Breitsprecher and the classification of finitely generated abelian groups, $\fp\D=\fp\Ab  \cap \D=0$, so $\D$ is not finitely accessible.
So, although every $\lambda$-accessible category with products may be found as a $\lambda$-definable subcategory of a presheaf category, it is not the case that every $\lambda$-definable category is $\lambda$-accessible.
However, every $\omega$-definable subcategory of $\Ab$ is $\aleph_1$-accessible, since, by the Löwenheim-Skolem theorem, every abelian group is the ($\aleph_1$-directed) union of its countable (i.e. $\aleph_1$-presented) elementary subgroups.

\subsection{A presentability theorem}\label{subsec:a-presentability-theorem}
In this section we present the following weakening of the general adjoint functor theorem.
We will use it to reduce the number of generators required by a functor.
\begin{thm}
    \thlabel{benny} Let $D:\J\to\C$ be such that the limit
    \[\int_{J\in\J}DJ\in\C\]
    exists and the colimit functor
    \[\int^{J\in \J\op}\Hom_\C\left(DJ, -\right):\C\to\Set\]
    exists.

    By the universal property of colimits, and the Yoneda lemma, there is a canonical morphism
    \begin{displaymath}
        \mathrm{can}_D:\int^{J\in \J\op}\Hom_\C\left(DJ, -\right)\to \Hom_\C\left(\int_{J\in\J}DJ,-\right).
    \end{displaymath}

    For a functor $F:\C\to\Set$, the following are equivalent:
    \begin{enumerate}
        \item\label{itm:preslim}$F$ preserves the limit $\int_{J\in\J}DJ$.
        \item\label{itm:uv} For any morphism
        \[u:\int^{J\in\J\op}\Hom_\C(DJ,-)\to F\]
        there is a unique morphism
        \[v:\Hom_\C\left(\int_{J\in\J}DJ,-\right)\to F\]
        such that $v\cdot\mathrm{can}_D=u$.

        In the situation $\ref{itm:uv}$, we also have:
        \begin{enumerate}
            \item\label{itm:uvi} If $u$ is an epimorphism then $v$ is an epimorphism.
            \item\label{itm:uvii} If $u$ is a split monomorphism then $v$ is a split monomorphism.
            \item\label{itm:uviii} If $u$ is an isomorphism then $v$ is an isomorphism.
        \end{enumerate}
    \end{enumerate}
\end{thm}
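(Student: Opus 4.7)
The plan is to reduce the equivalence $\ref{itm:preslim}\Leftrightarrow\ref{itm:uv}$ to a bijectivity statement obtained by applying the Yoneda lemma together with the universal property of the colimit functor. First I would observe that, by the universal property of $\int^{J\in\J\op}\Hom_\C(DJ,-)$, natural transformations $u:\int^{J\in\J\op}\Hom_\C(DJ,-)\to F$ correspond to compatible families $\left(\alpha_J:\Hom_\C(DJ,-)\to F\right)_{J\in\J\op}$, which by Yoneda correspond to families $\left(x_J\in FDJ\right)_{J\in\J}$ satisfying $F(Df)(x_K)=x_J$ for every $f:K\to J\in\J$ — that is, to elements of $\int_{J\in\J}FDJ$. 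Similarly, natural transformations $v:\Hom_\C(\int_{J\in\J}DJ,-)\to F$ correspond by Yoneda to elements of $F\left(\int_{J\in\J}DJ\right)$. Unwinding the definition of $\mathrm{can}_D$ (whose components are given by precomposition with the limiting cone $\pi_J:\int_{J\in\J}DJ\to DJ$), the assignment $v\mapsto v\cdot\mathrm{can}_D$ corresponds under these bijections to the canonical comparison map $F\left(\int_{J\in\J}DJ\right)\to \int_{J\in\J}FDJ$ sending $y\mapsto(F(\pi_J)y)_J$. Thus condition $\ref{itm:uv}$ asserts precisely that this comparison is a bijection, which is condition $\ref{itm:preslim}$.

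For parts $\ref{itm:uvi}$--$\ref{itm:uviii}$ I would repeatedly exploit the uniqueness clause of $\ref{itm:uv}$. Part $\ref{itm:uvi}$ is standard: if $\alpha,\beta:F\to G$ satisfy $\alpha v=\beta v$, then $\alpha u=\alpha v\cdot\mathrm{can}_D=\beta v\cdot\mathrm{can}_D=\beta u$, so $\alpha=\beta$ because $u$ is epi. For part $\ref{itm:uvii}$, let $s:F\to\int^{J\in\J\op}\Hom_\C(DJ,-)$ be a retraction of $u$, and set $t=\mathrm{can}_D\cdot s$. Then
\begin{displaymath}
    tv\cdot\mathrm{can}_D = t\cdot u = \mathrm{can}_D\cdot s\cdot u = \mathrm{can}_D = 1\cdot\mathrm{can}_D,
\end{displaymath}
and since the identity is the unique morphism $\Hom_\C(\int_{J\in\J}DJ,-)\to\Hom_\C(\int_{J\in\J}DJ,-)$ whose composite with $\mathrm{can}_D$ is $\mathrm{can}_D$ (by the uniqueness in $\ref{itm:uv}$ applied with target $\Hom_\C(\int_{J\in\J}DJ,-)$), we conclude $tv=1$. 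Part $\ref{itm:uviii}$ then follows because an isomorphism $u$ is both epi and split mono, so $v$ is both epi and split mono by the previous parts, and any such morphism is an isomorphism.

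The only genuinely interesting step is the split-mono part $\ref{itm:uvii}$, where one must guess the right retraction $t=\mathrm{can}_D\cdot s$ and invoke the uniqueness clause of $\ref{itm:uv}$; once that trick is found, the rest is formal. The remaining obstacle is purely bookkeeping: carefully matching the cocone description of $\int^{J\in\J\op}\Hom_\C(DJ,-)$-morphisms with the limit description of $\int_{J\in\J}FDJ$, so that the composite $v\mapsto v\cdot\mathrm{can}_D$ is manifestly the canonical comparison map whose bijectivity is the definition of $F$ preserving the limit.
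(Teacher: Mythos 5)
Your proposal is correct and follows essentially the same route as the paper: the paper likewise treats the equivalence \ref{itm:preslim}$\Leftrightarrow$\ref{itm:uv} as the standard Yoneda/comparison-map fact (stated there as known, which you simply spell out), and proves \ref{itm:uvii} by exactly your trick, applying the uniqueness clause of \ref{itm:uv} to $F=\Hom_\C\left(\int_{J\in\J}DJ,-\right)$ with $u=\mathrm{can}_D$ so that $\mathrm{can}_D\cdot s$ is a left inverse of $v$, with \ref{itm:uvi} immediate and \ref{itm:uviii} deduced from the other two.
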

\begin{proof}
    The equivalence $\ref{itm:preslim}\Leftrightarrow\ref{itm:uv}$ is known and not hard to prove, but often only stated when $\C$ is small.
    The remarks on $\ref{itm:uv}$ are easy additions: $\ref{itm:uvi}$ is immediate; to get $\ref{itm:uvii}$, we apply $\ref{itm:uv}$ to $F=\Hom\left(\int_{J\in\J}DJ,-\right)$ and $u=\mathrm{can}_D$ to obtain the fact that, if $w$ is a left inverse of $u$, then $\mathrm{can}_D \cdot w$ is a left inverse of $v$; $\ref{itm:uviii}$ follows from $\ref{itm:uvi}$ and $\ref{itm:uvii}$.
\end{proof}
\begin{rmk}
    \thlabel{addbenny}
    An additive analog of \thref{benny} exists.
    We replace $\C$ by an additive category, $\Set$ by $\Ab$ and $F$ by an additive functor $\C\to\Ab$.
    We do not require $\J$ to be additive.
\end{rmk}
\begin{ex}
    We can deduce the representability theorems from \thref{benny}.
    Let a functor $F:\C\to\Set$ on a category $\C$ be given and let $\E=\E_F$.
    We know that $F$ can be written as the colimit of a diagram of representables, indexed by $\E$:
    \begin{displaymath}
        F=\int^{(A,x)\in \E\op}\Hom_\C(A,-).
    \end{displaymath}
    If $F$ has a solution set $\F\subseteq \E$, $\C$ is complete, and $F$ is continuous, then there is a small cofinal subcategory $\G\subseteq \E$ which contains $\F$, so
    \begin{displaymath}
        F\simeq \int^{(A,x)\in \G\op}\Hom_\C(A,-).
    \end{displaymath}
    It follows from \thref{benny}\ref{itm:uviii} that $F$ is representable.
\end{ex}

    \section{Functor categories from the model theory of modules}\label{sec:functor-categories-from-the-model-theory-of-modules}
    \subsection{Finitely presented functors}\label{subsec:fpstuff}
In this section, let $\C$ denote an accessible additive category.
\begin{defn}
    A functor $F:\C\to\Ab$ is said to be \emph{finitely presented} if there is an exact sequence
    \begin{displaymath}
        \xymatrix{\Hom_\C(B,-)\ar[r]&\Hom_\C(A,-)\ar[r]&F\ar[r]&0}
    \end{displaymath}
    for objects $A,B\in\C$.
    We write $\fp{\C,\Ab}$ for the \emph{category of finitely presented functors} $\C\to\Ab$.
\end{defn}
\begin{ex}
    Suppose $\C$ has kernels and suppose $F:\C\to\Ab$ is finitely presented.
    It is known that, if $F$ is left exact $F$ is representable.
\end{ex}
We recall the following fundamental result of Auslander.
\begin{thm}[{\cite[Proposition 2.1]{auslander1965}}]
    \thlabel{aus} For any additive category $\C$, $\fp{\C,\Ab}$ has object-wise cokernels.
    If $\C$ has weak cokernels then $\fp{\C,\Ab}$ is abelian with object-wise kernels.
    Conversely, if $\fp{\C,\Ab}$ has weak kernels then $\C$ has weak cokernels.
    If $\C$ is abelian then $\fp{\C,\Ab}$ has global dimension $2$ or $0$.
\end{thm}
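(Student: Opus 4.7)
The plan is to verify each of Auslander's four assertions using the Yoneda lemma and its consequence that $\Hom_\C(A,-)$ is projective in $\fp{\C,\Ab}$: a map $\Hom_\C(A,-)\to H$ is an element of $HA$, and any epimorphism between such functors is object-wise surjective.

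First, for the object-wise cokernels, I would choose presentations $\Hom_\C(B,-)\to\Hom_\C(A,-)\to F\to 0$ and $\Hom_\C(D,-)\to\Hom_\C(C,-)\to G\to 0$ and lift any $\phi:F\to G$ to a morphism between them, enlarging the second presentation by representable summands (for instance replacing $D$ with $D\oplus B$) if necessary to ensure that the map between the leftmost representables also lifts via Yoneda. The object-wise cokernel of $\phi$ in $\Fun{\C,\Ab}$ can then be read off as the cokernel of a concrete morphism of representables, so is finitely presented, and hence is also the cokernel in $\fp{\C,\Ab}$.

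Second, for the object-wise kernels assuming $\C$ has weak cokernels, the key observation is that a weak cokernel $g:B\to C$ of $f:A\to B$ makes $\Hom_\C(C,-)\to\Hom_\C(B,-)\to\Hom_\C(A,-)$ exact at the middle term, by the very defining factorisation property of $g$. Hence the kernel of any morphism between representables is finitely generated, and a diagram chase on the lifted morphism of presentations (or the snake lemma in $\Fun{\C,\Ab}$) combined with the object-wise cokernels just established shows that the kernel of any morphism in $\fp{\C,\Ab}$ is finitely presented. Conversely, if $\fp{\C,\Ab}$ has weak kernels, then given $f:A\to B$ in $\C$ one chooses a weak kernel $K\to\Hom_\C(B,-)$ of $f^{*}$ and any representable surjection $\Hom_\C(C,-)\twoheadrightarrow K$; the induced $c:B\to C$ in $\C$ is then seen via Yoneda to be a weak cokernel of $f$.

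Finally, when $\C$ is abelian, the left-exactness of representables gives $\ker f^{*}=\Hom_\C(\Coker f,-)$, so every f.p.\ functor $F$ admits a length-two projective resolution $0\to\Hom_\C(\Coker f,-)\to\Hom_\C(B,-)\to\Hom_\C(A,-)\to F\to 0$, bounding the global dimension by $2$. The $2$-or-$0$ dichotomy---ruling out global dimension exactly $1$---is the main obstacle, and rests on Auslander's additional observation that representables in $\fp{\C,\Ab}$ enjoy enough injectivity to make $\fp{\C,\Ab}$ behave like a Frobenius category: any non-projective f.p.\ functor then forces projective dimension exactly $2$ to be attained somewhere, while if none exists then $\C$ is semisimple and the global dimension is $0$.
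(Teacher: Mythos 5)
The paper does not prove this theorem at all (it is quoted from Auslander), so the only benchmark is the classical argument. Your treatment of the first three assertions is essentially that argument and is fine: object-wise cokernels via a lifted map of presentations (the cokernel of $\phi:F\to G$ is presented by $\Hom_\C(D\oplus A,-)\to\Hom_\C(C,-)$); exactness of $\Hom_\C(C,-)\to\Hom_\C(B,-)\to\Hom_\C(A,-)$ at the middle term when $g:B\to C$ is a weak cokernel of $f$ (note that to see the kernel of a map of representables is finitely \emph{presented}, not just finitely generated, you apply the weak-cokernel construction a second time, to $g$ itself, before running your diagram chase); and the converse via projectivity of representables with respect to object-wise surjections, which your opening remarks correctly justify.

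The genuine gap is in the last assertion. The bound $\mathrm{gl.dim}\le 2$ via $0\to\Hom_\C(\Coker f,-)\to\Hom_\C(B,-)\to\Hom_\C(A,-)\to F\to 0$ is correct, but the exclusion of global dimension exactly $1$ is not proved: the appeal to ``enough injectivity of representables'' making $\fp{\C,\Ab}$ ``behave like a Frobenius category'' is neither established nor true in general (the projectives are exactly the representables, since idempotents of $\C$ split, while for $\C$ abelian and not semisimple some representables fail to be injective), and the sentence ``any non-projective f.p.\ functor then forces projective dimension exactly $2$ to be attained'' is a restatement of the claim, not an argument. The standard argument is elementary and should replace this: assume every finitely presented functor has projective dimension $\le 1$, and let $0\to A\xrightarrow{f}B\xrightarrow{g}C\to 0$ be any short exact sequence in $\C$. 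Put $F=\Coker(f^*)$ and $W=\Img(f^*)$. From $0\to W\to\Hom_\C(A,-)\to F\to 0$ and $\mathrm{pd}\,F\le 1$ you get that $W$ is projective, hence representable; from left exactness of $\Hom$ you get the exact sequence $0\to\Hom_\C(C,-)\xrightarrow{g^*}\Hom_\C(B,-)\to W\to 0$, which therefore splits, and a retraction of $g^*$ corresponds under Yoneda to $s:C\to B$ with $gs=1_C$. So every short exact sequence of $\C$ splits, i.e.\ $\C$ is semisimple, and then every finitely presented functor is representable, so the global dimension is $0$. Hence the global dimension is never $1$: it is $0$ in the semisimple case and exactly $2$ otherwise, which is the stated dichotomy.
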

\begin{thm}
    \thlabel{fpacc}Let $\C$ be a $\lambda$-accessible category with products.
    For a functor $F:\C\to\Ab$, the following are equivalent.
    \begin{enumerate}
        \item\label{itm:fpa}$F$ is $\lambda$-accessible and preserves products.
        \item\label{itm:fpb}There is an exact sequence
        \begin{displaymath}
            \xymatrix{\Hom_\C(B,-)\ar[r]&\Hom_\C(A,-)\ar[r]&F\ar[r]&0.}
        \end{displaymath}
        where $A$ and $B$ are $\lambda$-presented.
    \end{enumerate}
\end{thm}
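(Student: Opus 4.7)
The plan is as follows. The direction $\ref{itm:fpb}\Rightarrow\ref{itm:fpa}$ is formal: for $A\in\Presl\C$ the representable $\Hom_\C(A,-)$ is $\lambda$-accessible by the definition of $\lambda$-presentation and preserves all limits, and since $\lambda$-directed colimits and arbitrary products are both exact in $\Ab$, the pointwise cokernel $F$ of a natural transformation between two such representables inherits $\lambda$-accessibility and product-preservation.

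For $\ref{itm:fpa}\Rightarrow\ref{itm:fpb}$, I first aim to produce an epimorphism $\Hom_\C(A,-)\to F$ with $A\in\Presl\C$, and then iterate on its kernel. Since $\C$ is $\lambda$-accessible, $\Presl\C$ is essentially small, so the category of elements $\E$ of the restriction $F|_{\Presl\C}$ is essentially small, and $\lambda$-accessibility of $F$ gives
\[
F\simeq\int^{(A,x)\in\E\op}\Hom_\C(A,-).
\]
Forming $P=\prod_{(A,x)\in\E}A\in\C$ (indexed over a small skeleton of $\E$) and using that $F$ preserves products, we have $F(P)\simeq\prod F(A)$, so the tuple $\xi=(x)_{(A,x)}$ is a canonical element which, by Yoneda, defines a natural transformation $\widetilde\eta:\Hom_\C(P,-)\to F$. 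A short calculation shows $\widetilde\eta$ is pointwise surjective: any $y\in F(C)$ is represented in the coend by some $(A,x,g:A\to C)$, whence $\widetilde\eta_C(g\circ\pi_{(A,x)})=F(g)(F(\pi_{(A,x)})(\xi))=F(g)(x)=y$.

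The main obstacle is that $P$ is not $\lambda$-presented. To descend to a $\lambda$-presented generator, I would write $P=\int^{i\in I}P_i$ as a $\lambda$-directed colimit of $P_i\in\Presl\C$ with structure maps $u_i:P_i\to P$; since $F$ preserves this colimit, $\xi=F(u_i)(\xi_i)$ for some $i$ and some $\xi_i\in F(P_i)$. The induced $\eta:\Hom_\C(P_i,-)\to F$ then inherits pointwise surjectivity from $\widetilde\eta$ via the identity $\widetilde\eta_C(f)=\eta_C(f\circ u_i)$ (valid for $f:P\to C$), so taking $A=P_i$ gives the desired epimorphism. Finally, letting $K=\ker\eta$ (computed pointwise), exactness of products and of $\lambda$-directed colimits in $\Ab$ applied to the short exact sequence $0\to K\to\Hom_\C(A,-)\to F\to 0$ forces $K$ to itself be $\lambda$-accessible and product-preserving, so repeating the construction yields $B\in\Presl\C$ and an epimorphism $\Hom_\C(B,-)\to K$; composing with $K\hookrightarrow\Hom_\C(A,-)$ produces the required presentation of $F$.
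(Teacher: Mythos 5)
Your proof is correct, and its skeleton matches the paper's: the direction $\ref{itm:fpb}\Rightarrow\ref{itm:fpa}$ is the same exactness argument, and for $\ref{itm:fpa}\Rightarrow\ref{itm:fpb}$ both you and the paper write $F$ as the canonical colimit of representables at $\lambda$-presented objects, use preservation of a product to compress the resulting family of generators into a single generator, and then iterate on the kernel, which preserves products and $\lambda$-directed colimits by exactness in $\Ab$. Where you genuinely diverge is the descent from the product $P$ (which is not $\lambda$-presented) to a $\lambda$-presented generator. The paper factors the epimorphism $\bigoplus_i\Hom_\C(A_i,-)\to F$ through $\Hom_\C\left(\prod_i A_i,-\right)$ via \thref{benny}/\thref{addbenny}, lifts the generator $x\in F\left(\prod_i A_i\right)$ back along $\bigoplus_i\Hom_\C\left(A_i,\prod_i A_i\right)\to F\left(\prod_i A_i\right)$, and uses the finite support of an element of a direct sum to cut down to the finite direct sum $A'=\bigoplus_{i\in I'}A_i$, which is $\lambda$-presented. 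You instead apply $\lambda$-accessibility of $F$ a second time, to $P$ itself: writing $P$ as a $\lambda$-directed colimit of $\lambda$-presented objects $P_i$, the generator $\xi\in FP$ pulls back to some $\xi_i\in FP_i$, and the identity $\widetilde\eta_C(f)=\eta_C(f\circ u_i)$ shows that $\eta:\Hom_\C(P_i,-)\to F$ remains pointwise surjective. Both descents are valid; yours avoids the presentability theorem and the finite-support bookkeeping (needing only Yoneda plus preservation of one product and one $\lambda$-directed colimit), while the paper's gives slightly more explicit control, exhibiting the generator as a finite direct sum of objects already occurring in the canonical diagram. One small point to make explicit: your claim that each $y\in FC$ is represented by a single leg $(A,x,g)$ of the additive coend should be justified by accessibility (write $C$ as a $\lambda$-directed colimit of $\lambda$-presented objects and pull $y$ back); alternatively, your surjectivity argument is robust even if $y$ is only a finite sum of legs, since $\sum_k F(g_k)(x_k)=\widetilde\eta_C\bigl(\sum_k g_k\circ\pi_{(A_k,x_k)}\bigr)$.
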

\begin{proof}
    $\Ab$ is $\lambda$-accessible.

    $\ref{itm:fpb}\implies\ref{itm:fpa}$: Products and $\lambda$-directed colimits are exact in $\Ab$.
    It follows that $F$ also preserves products $\lambda$-directed colimits.

    $\ref{itm:fpa}\implies\ref{itm:fpb}$: Let $\B=\Presl\C$ and consider the restriction $G=F|_\B:\B\to\Ab$.
    Then $F$ is the unique $\lambda$-accessible functor such that $F|_\B\simeq G$, and it can be recovered from $G$ as a colimit functor
    \begin{displaymath}
        F\simeq \int^{(A,x)\in\E_G\op}\Hom_\C(A,-).
    \end{displaymath}
    Therefore there is a set $\{A_i\}_{i\in I}$ of $\lambda$-presented objects of $\C$ and an epimorphism
    \begin{displaymath}
        \bigoplus_{i\in I}\Hom_\C(A_i,-)\to F.
    \end{displaymath}
    Now, $\C$ has the product $A=\prod_{i\in I}A_i$, and $F$ preserves it, so, by \thref{addbenny}, there is an epimorphism
    \begin{displaymath}
        u:\Hom_\C(A,-)\to F.
    \end{displaymath} Let $x=v_A(1_A)\in FA$ be the Yoneda-corresponding element to $v$.
    Since $v$ is an epimorphism there is no proper subfunctor $G\subset F$ such that $x\in GA$.
    In other words, $x$ generates $F$.

    Suppose $\{x_i\}_{i\in I}\in \bigoplus_{i\in I}\Hom_\C(A_i,A)$ are such that $u_A(\{x_i\}_{i\in I})=x$.
    Let $I'=\{i\in I:x_i\neq 0\}$, which is a finite set.
    We can restrict $u$ to an epimorphism \begin{displaymath}
                                                                                                                                                                                                            u':\Hom_\C(A',-)\to F.
    \end{displaymath}
    where $A'=\bigoplus_{i\in I'}A_i\in\B$.
    Now, $K=\Ker(u')$ preserves $\lambda$-directed colimits and products, so by applying the same argument to $K$ we obtain an epimorphism $\Hom_\C(B,-)\to K$ for some $B\in\B$.
\end{proof}
\begin{defn}
    For an accessible category $\C$ with products, we write $\funi\C$ for the category of accessible functors $\C\to\Ab$ that preserve products.
\end{defn}
\begin{defn}
    \thlabel{ldef}
    If $\D$ is a $\lambda$-definable additive category, we write $\Funl\D$ for the category of accessible functors $\D\to\Ab$ that preserve $\lambda$-directed colimits, and we write $\funl\D$ for the category of functors $\D\to\Ab$ that preserve $\lambda$-directed colimits and products.
    We simply write $\Fun\D$ and $\fun\D$ when $\lambda=\omega$.
\end{defn}
\begin{cor}
    For an accessible category $\C$ with products, $\fp{\C,\Ab}=\funi\C$.
\end{cor}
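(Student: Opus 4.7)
The plan is to prove the two inclusions $\fp{\C,\Ab}\subseteq\funi\C$ and $\funi\C\subseteq\fp{\C,\Ab}$ using \thref{fpacc} as the essential ingredient, the main task being to eliminate the cardinal $\lambda$ from the statement of that theorem.

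For $\fp{\C,\Ab}\subseteq\funi\C$, I would start from a presentation $\Hom_\C(B,-)\to\Hom_\C(A,-)\to F\to 0$. Representable functors preserve all limits, in particular products, and cokernels in $(\C,\Ab)$ are computed objectwise while products are exact in $\Ab$, so $F$ preserves products. Since $\C$ is accessible, we can choose a regular cardinal $\lambda$ so that both $A$ and $B$ are $\lambda$-presented; then the two representables preserve $\lambda$-directed colimits, which are also exact in $\Ab$, so $F$ does too. Hence $F$ is $\lambda$-accessible and product-preserving, i.e.\ $F\in\funi\C$.

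For the reverse inclusion, suppose $F\in\funi\C$, say $F$ is $\mu$-accessible and preserves products. Since $\C$ is $\lambda_0$-accessible for some $\lambda_0$, the standard ``sharpening'' of accessibility indices (see \cite{ar}) allows one to choose a single regular cardinal $\lambda\geq\max(\lambda_0,\mu)$ with respect to which both $\C$ is accessible and $F$ remains $\lambda$-accessible. With this uniform $\lambda$, the hypotheses of \thref{fpacc}\ref{itm:fpa} are in force, so $F$ admits a presentation $\Hom_\C(B,-)\to\Hom_\C(A,-)\to F\to 0$ with $A,B$ $\lambda$-presented. In particular, $F$ is finitely presented.

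The only subtle point is the cardinal bookkeeping in the second inclusion: arranging that both $\C$ and $F$ are accessible with respect to the same regular cardinal $\lambda$. This is a routine use of the fact that the class of regular cardinals witnessing accessibility of a given accessible category is cofinal in the class of regular cardinals (up to the standard sharply-larger-than relation), but it is the one step where the cardinal-free statement is genuinely stronger than \thref{fpacc} itself.
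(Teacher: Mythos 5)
Your proposal is correct and is essentially the argument the paper intends: the corollary is stated as an immediate consequence of \thref{fpacc}, with precisely the cardinal bookkeeping you spell out (raising to a regular $\lambda$ that is sharply larger than an accessibility rank of $\C$ and at least $\mu$, noting that $\mu$-directed colimit preservation automatically upgrades to $\lambda$-directed colimit preservation, and that any two objects $A,B$ are $\lambda$-presented for all sufficiently large such $\lambda$). No genuinely different route is taken, and your identification of the uniformisation of $\lambda$ as the only nontrivial point is accurate.
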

We can now restate \thref{inj} in the additive context.
\begin{cor}
    \thlabel{deffp}
    Let $\C$ be a locally $\lambda$-presentable additive category.
    A full subcategory $\D\subseteq \C$ is $\lambda$-definable if and only if there is a full subcategory $\F\subseteq\funl\C$ such that
    \begin{displaymath}
        \D = \setc{C\in\C }{ \forall F\in\F, F C = 0}.
    \end{displaymath}
\end{cor}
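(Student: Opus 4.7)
The plan is to reduce to Theorem~\thref{inj} (which equates $\lambda$-definable subcategories with $\lambda$-injectivity classes) by translating between the injectivity condition for a morphism between $\lambda$-presented objects and the vanishing of its associated cokernel functor. The bridge is Theorem~\thref{fpacc}: since $\C$ is locally $\lambda$-presentable, a functor $F:\C\to\Ab$ lies in $\funl\C$ if and only if it fits into an exact sequence
\[\Hom_\C(B,-)\xrightarrow{\Hom_\C(f,-)}\Hom_\C(A,-)\to F\to 0\]
for some $f:A\to B$ with $A,B\in\Presl\C$.

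The first substantive step is to verify the key dictionary: for such a presentation, applying the Yoneda lemma shows that $F(C) = 0$ is equivalent to the surjectivity of the precomposition map $\Hom_\C(B,C)\to\Hom_\C(A,C)$, which in turn is exactly the condition that $C$ be $\{f\}$-injective. Writing $F_f$ for the functor presented by $f$, it follows that for any collection $\M$ of morphisms between $\lambda$-presented objects, the class $\{C\in\C : F_fC=0 \text{ for all } f\in\M\}$ coincides with the class of $\M$-injective objects in $\C$.

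For the forward direction, Theorem~\thref{inj} supplies a set $\M$ of morphisms between $\lambda$-presented objects whose injectivity class is $\D$, and then $\F := \{F_f : f\in\M\}$ is a full subcategory of $\funl\C$ with the required property. Conversely, given $\F\subseteq\funl\C$ whose common zero-locus is $\D$, apply Theorem~\thref{fpacc} to pick for each $F\in\F$ a presenting morphism $f_F$ between $\lambda$-presented objects. The step I expect to need most care is set-theoretic: the full subcategory $\F$ need not be a set, so we cannot directly feed the family $\{f_F\}$ to Theorem~\thref{inj}. This is resolved by observing that $\Presl\C$ is essentially small (since $\C$ is $\lambda$-accessible), so by Theorem~\thref{fpacc} there are only set-many isomorphism classes in $\funl\C$; replacing each $F$ by an isomorphic representative produces a set $\M$ of morphisms between $\lambda$-presented objects with the same zero-locus $\D$, and Theorem~\thref{inj} completes the argument.
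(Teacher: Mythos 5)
Your proposal is correct and follows essentially the same route as the paper, whose proof is simply to combine \thref{inj} with \thref{fpacc}; your Yoneda dictionary between $F_fC=0$ and $\{f\}$-injectivity of $C$ is exactly the implicit translation, and the observation that $\Presl\C$ (hence $\funl\C$ up to isomorphism) is essentially small, so one may pass from a full subcategory $\F$ to a genuine set of presenting morphisms, is a worthwhile detail the paper leaves unstated.
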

\begin{proof}
    Combine \thref{inj} and \thref{fpacc}
\end{proof}
\begin{rmk}
    Let $R$ be a ring and right $\M=R\Mod$ be the category of left $R$-modules.
    In light of Burke's result, which says that pp-pairs for left $R$-modules are precisely the finitely presented functors $\fp\M\to\Ab$ \cite{burke}, Prest essentially proved \thref{deffp}, in the case where $\lambda=\omega$ and $\C=\M$, at~\cite[Corollary 2.32]{prest1988}.
    Prest's proof utilises pp-elimination of quantifiers and Shelah's result which connects elementary equivalence with ultrapowers.
    Naively, one might try to generalise this proof to one which uses a generalised result about $\lambda$-complete ultrapowers.
    However, one meets some problems with this approach: The combinatorics for Shelah's result are very difficult, and the existence of non-principal $\lambda$-complete ultrapowers are not provable in ZFC~\cite[12.12]{jech}.
    So, one might be forgiven for thinking that the theory of accessible categories has saved us from dealing with these issues.
    Although it is certainly true that we have side-stepped the need for Shelah's result, we have not avoided the subtle combinatorics: These have simply been relocated, e.g.\ to the proof of \thref{hard}, on which \thref{deffp} relies.
\end{rmk}
\begin{rmk}\thlabel{rmk:funquotient}
    Suppose $\C$ is a locally presentable additive category and let $\D$ be a small-injectivity class in $\C$.
There is the obvious restriction functor
\begin{displaymath}
    i^*:\funi\C\to \funi\D
\end{displaymath}
and there is a unique right exact functor
\begin{displaymath}
    i_*:\funi\D\to \funi\C
\end{displaymath}
such that $i_*\Hom_\D(D,-)\simeq \Hom_\C(D,-)$, naturally in $D$.
It is easy to show that $i_*\adj i^*$ and $i_*$ is fully faithful.
We therefore obtain the following Serre quotient:
\begin{displaymath}
    \funi\C/\setc{F\in\funi\C}{F|_\D=0}\simeq \funi\D
\end{displaymath}
\end{rmk}
\begin{rmk}
It is unknown to me whether a sharpening of the result in \ref{rmk:funquotient}, 
\begin{displaymath}
    \funl\C/\setc{F\in\funl\C}{F|_\D=0}\simeq \funl\D
\end{displaymath}
is provable for any regular cardinal $\lambda$. The $\lambda=\omega$ is known \cite{prest2009}.
\end{rmk}

\subsection{pp-pairs}\label{subsec:ppp}
We now demonstrate the connection with finitary model theory.
We give an algebraic description of the syntactic aspects, but a model theorist would have no trouble translating.
No new results are given in this section, and~\cite{prest2009} or~\cite[Chapter 12]{prest1988} is an alternative reference.

In this subsection, let $\R$ denote a small additive category and let $\M=\R\Mod$.
We shall use lower case letters such as a $\ka$, $\la$, $\ma$ and $\na$ to denote objects of $\R$, and upper case letters such as $A$, $B$, $C$, and $D$, for arrows in $\R$.
For a left $\R$-module $M\in\M$ and an object $\ka\in\R$, we shall write $M^\ka=M\ka$, and for a morphism $A:\ma\to\na\in\R$ and an element $x\in M^\ma$, we shall write $Ax=(MA)x$.

\begin{defn}
    A \emph{pp-$\na$-formula (for left $\R$-modules)} is a cospan
    \begin{displaymath}
        \xymatrix{\phi:&\na \ar[r]^{A}& \ma & \la\ar[l]_{B}.}
    \end{displaymath}
    Given $M\in\M$, then $F_\phi M\subseteq M^\na$ is the set of elements $x\in M^\na$ such that $Ax=By$ for some $y\in M^\la$; this is the \emph{solution set of $\phi$ in $M$}.
    When $\la=0$ we say that $\phi$ is \emph{quantifier free}.
\end{defn}
\begin{thmdef}
    Every pp-$\na$-formula $\phi$ admits a finitely presented $\R$-module $M$ and an element $a\in F_\phi M$ such that, for any other module $N$ and solution $b\in F_\phi N$, there is a morphism $f:M\to N$ with $f^\na a=b$. \emph{Such a pair $(M,a)$ is called a \emph{free realisation of $\phi$}.}
\end{thmdef}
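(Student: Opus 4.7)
The plan is to construct the free realisation as the obvious ``pushout'' module presented by generators and relations encoded by the cospan $\phi$. Concretely, write $A^*:\Hom_\R(\ma,-)\to\Hom_\R(\na,-)$ and $B^*:\Hom_\R(\ma,-)\to\Hom_\R(\la,-)$ for the natural transformations induced by $A:\na\to\ma$ and $B:\la\to\ma$ via the Yoneda lemma (precomposition). Define
\[
M\ :=\ \Coker\!\left(\Hom_\R(\ma,-)\xrightarrow{(A^*,\,-B^*)}\Hom_\R(\na,-)\oplus\Hom_\R(\la,-)\right).
\]
This is manifestly finitely presented in $\M=\R\Mod$, being a cokernel of a morphism between finite direct sums of representables.

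Next, I would identify the distinguished solution. The composite $\Hom_\R(\na,-)\hookrightarrow \Hom_\R(\na,-)\oplus \Hom_\R(\la,-)\twoheadrightarrow M$ corresponds, under Yoneda, to an element $a\in M^\na$; the analogous construction for $\Hom_\R(\la,-)$ yields an element $a'\in M^\la$. The defining relation of the cokernel translates directly, via Yoneda, into the equation $Aa=Ba'$ in $M^\ma$, so $a\in F_\phi M$ witnessed by $a'$.

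For the universal property, given any $N\in\M$ and any solution $b\in F_\phi N$, pick $c\in N^\la$ with $Ab=Bc$. By Yoneda, $b$ and $c$ correspond to natural transformations $\beta:\Hom_\R(\na,-)\to N$ and $\gamma:\Hom_\R(\la,-)\to N$. The equation $Ab=Bc$ is precisely the condition $\beta\circ A^*=\gamma\circ B^*$, so by the universal property of the cokernel (i.e., of the pushout $\Hom_\R(\na,-)\sqcup_{\Hom_\R(\ma,-)}\Hom_\R(\la,-)$) there is a (not necessarily unique) morphism $f:M\to N$ extending the pair $(\beta,\gamma)$, and by construction $f^\na a=b$.

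There is no real obstacle here; the proof is essentially an unpacking of Yoneda and the universal property of the cokernel, so the only care is in translating the elementary statement ``$Ax=By$ for some $y$'' into the compatibility condition required to glue two representables along $\Hom_\R(\ma,-)$. It is worth noting that uniqueness of $f$ cannot be expected, since different witnesses $c$ for the same solution $b$ generally give different extensions; this is a feature rather than a bug, and explains why the terminology is ``free realisation'' rather than ``universal realisation''.
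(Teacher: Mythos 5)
Your proof is correct and is essentially the paper's own argument: the paper defines $M$ as the pushout of $\Hom_\R(\na,-)\xleftarrow{A^*}\Hom_\R(\ma,-)\xrightarrow{B^*}\Hom_\R(\la,-)$, which is exactly your cokernel of $(A^*,-B^*)$, with the same Yoneda translation of a solution $(b,c)$ into a pair of maps out of representables agreeing on $\Hom_\R(\ma,-)$. One tiny slip: the morphism $f$ extending a \emph{fixed} pair $(\beta,\gamma)$ is in fact unique (that is the universal property of the pushout, and the paper records this); what fails, as you correctly explain at the end, is uniqueness of $f$ given $b$ alone, which is all the non-uniqueness the terminology ``free realisation'' reflects.
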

\begin{proof}
    Suppose $\phi$ is given by the span
    \begin{displaymath}
        \xymatrix{\phi:&\na\ar[r]^{A} & \ma & \la \ar[l]_{B}.}
    \end{displaymath}
    Let $M$ be defined by a pushout
    \begin{displaymath}
        \xymatrix{\Hom_\R(\ma,-)\ar[d]_{A^*}\ar[r]^{B^*}&\Hom_\R(\la,-)\ar[d]^c\\
        \Hom_\R(\na,-)\ar[r]_a&M.}
    \end{displaymath}
    Now if $N$ is a module with $b\in N^\na$ such that $Ab=Bd$ for some $d\in N^\la$, then there is a unique map $f:M\to N$ such that $f^\na a=b$ and $f^\la c=d$.
\end{proof}
If $\phi$ is a pp-$\na$-formula and $(M,a)$ is a free realisation, then the associated functor $F_\phi$ is the image of the induced morphism
\begin{displaymath}
    \Hom_\M(M,-)\to (-)^\na:g\mapsto ga,
\end{displaymath}
and therefore $F_\phi\in\fun{\M}$, because $\fun\M$ is an abelian subcategory of $(\M,\Ab)$ and so it is closed under images.

Let $M\in\fp\M$.
There is an exact sequence
\begin{displaymath}
    \xymatrix{\Hom_\R(\ma,-)\ar[r]^{A^*}&\Hom_\R(\na,-)\ar[r]&M\ar[r]&0}
\end{displaymath}
for some $A:\na\to\ma$, so $\Hom_\M(M,-)\simeq F_\phi$ where $\phi$ is
\begin{displaymath}
    \xymatrix{\phi:&\na\ar[r]^{A} & \ma & 0\ar[l].}
\end{displaymath}

Taking a subfunctor $G\subset F_\phi$ with $G\in\fun\M$, $G$ is clearly the image of some morphism ${\Hom_\M(N,-)\to(-)^\na}$, corresponding to an element $b\in N^\na$, with $N\in\fp\M$.
Suppose
\begin{displaymath}
    \xymatrix{
        \Hom_\R(\ka,-)\ar[r]^{B^*}&\Hom_\R(\la,-)\ar[r]& N\ar[r]&0
    }
\end{displaymath}
is an exact sequence for some $B:\la\to\ka\in\R$ and let $c\in N^\la$ be the element corresponding to the map ${\Hom_\R(\la,-)\to N}$.
There is ${C:\la\to\na\in\R}$ such that $b=Cc$.
It follows that $G=F_\psi$ where $\psi$ is the pp-$\na$-formula

\begin{displaymath}
    \xymatrix{\psi:&\na\ar[rr]^{\begin{pmatrix}
                                    1_\na\\0
    \end{pmatrix}} && \na\oplus \ka && \la\ar[ll]_{\begin{pmatrix}
                                                       C\\B
    \end{pmatrix}}.}
\end{displaymath}

\begin{defn}
    For two pp-$\na$-formulas $\phi$ and $\psi$, if $F_\psi$ is a subfunctor of $F_\phi$, we say that $\phi/\psi$ is a \emph{pp-$\na$-pair}.
    We also define $F_{\phi/\psi}=F_\phi/F_\psi\in\fun{\M}$.
\end{defn}

Now we have shown:

\begin{thm}[{\cite[Corollary 3.1.9]{burke}}]
    For any $F\in\fun\M$, there is a pp-pair $\phi/\psi$ such that $F\simeq F_{\phi/\psi}$ where $\phi$ is quantifier-free.
\end{thm}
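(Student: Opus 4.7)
The plan is to assemble the theorem out of the pieces already laid down in Subsection~\ref{subsec:ppp}, using \thref{fpacc} to identify $\fun\M$ with the finitely presented functors. First I would note that $\M=\R\Mod$ is locally finitely presentable additive, so \thref{fpacc} applies with $\lambda=\omega$: a functor $F:\M\to\Ab$ preserves directed colimits and products if and only if it admits a presentation
\[
\xymatrix{\Hom_\M(N,-)\ar[r]^-{\alpha}&\Hom_\M(M,-)\ar[r]&F\ar[r]&0}
\]
with $M,N\in\fp\M$. Pick such a presentation for the given $F\in\fun\M$.

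Next I would invoke the identification of $\Hom_\M(M,-)$ with $F_\phi$ for a quantifier-free $\phi$ which the text establishes just before the theorem: choose an exact sequence $\Hom_\R(\ma,-)\to\Hom_\R(\na,-)\to M\to 0$ giving $A:\na\to\ma$ in $\R$, and set $\phi$ to be the cospan $\na\xrightarrow{A}\ma\leftarrow 0$. Then I would form $G:=\Img(\alpha)$, a subfunctor of $\Hom_\M(M,-)\simeq F_\phi$. Because $\fp{\M,\Ab}=\funi\M$ is abelian by \thref{aus} (the category $\M$ has cokernels, hence weak cokernels) and closed under images inside $(\M,\Ab)$, $G$ lies in $\fun\M$ as a subfunctor of $F_\phi$.

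Now I would apply verbatim the construction given in the paragraph preceding the theorem: since $G\subseteq F_\phi$ is in $\fun\M$, there is a finitely presented $N$ with element $b\in N^\na$ generating $G$ (namely, taking $b$ to be the element corresponding to $\alpha$), and a presentation of $N$ yields a pp-$\na$-formula $\psi$ (the explicit cospan written out there) with $G=F_\psi$. Since $G\subseteq F_\phi$ by construction, $\phi/\psi$ is a pp-$\na$-pair and
\[
F\;\simeq\;\Hom_\M(M,-)/G\;\simeq\;F_\phi/F_\psi\;=\;F_{\phi/\psi},
\]
completing the proof.

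The step carrying real content has already been settled as \thref{fpacc}, namely that accessibility plus product-preservation for $F$ forces both terms of its presentation to be representable by \emph{finitely presented} modules; without that, one would only know $\Hom_\M(M,-)$ for $M\in\fp\M$, which would not match a pp-formula. Everything after that is bookkeeping against the formulas $\phi,\psi$ constructed explicitly in the paragraph just above the theorem, so I do not expect any further obstacle.
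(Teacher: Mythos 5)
Your proposal is correct and follows essentially the same route as the paper: the text preceding the theorem carries out exactly the steps you describe (the quantifier-free $\phi$ with $F_\phi\simeq\Hom_\M(M,-)$, and the identification of a subfunctor $G\subseteq F_\phi$ in $\fun\M$ as $F_\psi$), with the final assembly via \thref{fpacc} at $\lambda=\omega$ left implicit in the phrase ``Now we have shown,'' which you have simply made explicit. No gaps.
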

\begin{rmk}
    Let $R$ be a ring.
    Traditionally, for a natural number $n$ and a ring $R$, pp-$n$-formulas for left $R$-modules are conditions of the form $\exists y, Ax=By$ where $x=(x_1,\dots,x_n)^T$ is a column of $n$ free variables and $y=(y_1,\dots,y_m)^T$ is a column of existentially quantified variables, and where $A$ and $B$ are appropriately sized matrices over $R$.
    Such a condition can be interpreted in a module $M$ to give a subgroup of $M^n$.
    The present definition of pp-pair recovers this one in the case that $\R$ is the category whose objects are natural numbers and an arrow {$m\to n$} is an $n\times m$ matrix, since there is an obvious equivalence $\R\Mod\simeq R\Mod$, under which pp-$n$-formulas for left $\R$-modules coincide with the usual notion of a pp-$n$-formula for left $R$-modules.
\end{rmk}

\subsection{Sufficiency of finitary pp-pairs}\label{subsec:suff}
{In this section, let $\R$ be a small additive category with $\lambda$-small products, let $\M=\R\Mod$, the category of left $\R$-modules, and let $\N$ be the full subcategory of $\M$ consisting of those left $\R$-modules which preserve $\lambda$-small products.}

\begin{rmk}
    Let $\C$ be a locally $\lambda$-presented addive category and let $\R=(\Presl\C)\op$: By~\cite[Proposition 2.2]{tendas}, $ \C$ consists of the left $\R$-modules which preserve $\lambda$-small limits.
    But then, as a subcategory of $\N$,
    \begin{displaymath}
        \C=\setc{N\in\N}{ N\text{ is left exact}}.
    \end{displaymath}
    Therefore $\C$ can be identified \emph{as the full subcategory of $\N$} consisting of those modules which make the functors given by pp-pairs
    \begin{displaymath}
        \frac{
            \xymatrix{A\ar[r]^{f}&B&0\ar[l]}
        }{
            \xymatrix{A\ar[r]^{1_{A}}&A &0\ar[l]}
        },
        \
        \frac{
            \xymatrix{B\ar[r]^{g}&C&0\ar[l]}
        }{
            \xymatrix{B\ar[r]^{1_{B}}&B&A\ar[l]_f}
        }
        \ (f\text{ the kernel of } g, g\in \R)
    \end{displaymath}
    vanish.
    For any $\lambda$-definable subcategory $\D\subseteq\C$, there is a set of functors $F_i:\C\to\Ab$ ($i\in I$), each of which is part of an exact sequence
    \begin{displaymath}
        \xymatrix{\Hom_\C(B_i,-)\ar[r]^{f_i^*}&\Hom_\C(A_i,-)\ar[r]&F_i\ar[r]&0}
    \end{displaymath}
    with $f_i:B_i\to A_i\in\R$, such that
    \begin{displaymath}
        \D=\{D\in\C \ : \ \forall i \in I, \ F_i D=0\}.
    \end{displaymath}
    Therefore, $\D$ can be found \emph{as a subcategory of} $\N$ (into which $\C$ embeds as a $\lambda$-definable reflective subcategory) as the modules which make both the above set of pp-pairs, and the pp-pairs
    \begin{displaymath}
        \frac{
            \xymatrix{A_i\ar[r]^{1_{A_i}}&A_i&A_i\ar[l]_{1_{A_i}}}
        }{
            \xymatrix{A_i\ar[r]^{1_{A_i}}&A_i&B_i\ar[l]_{f_i}}
        } \ (i\in I)
    \end{displaymath}
    vanish.
    This shows that any $\lambda$-definable additive category $\D$ (see \thref{definabledef}) can be found as the class of modules which make the functors, given by certain pp-pairs, vanish, so long as we consider $\D$ to be a $\lambda$-definable subcategory of $\N$.
    We will now strengthen this result to a description of the functors in $\funl\N$: We will show that they are precisely those functors $\M\to\Ab$, given by pp-pairs, for left $\R$-modules, restricted to $\N$.
\end{rmk}
\begin{thm}
    \thlabel{lpfp}
    $\Presl\N=\fp\M.$
\end{thm}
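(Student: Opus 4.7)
The plan is to prove the equality by separate inclusions, leveraging the characterization stated earlier in the paper that a module $M$ lies in $\N$ if and only if it is a $\lambda$-directed colimit $\int^{i \in I} M_i$ of finitely presented $\R$-modules. The essential preliminary observation is that $\lambda$-directed colimits in $\N$ are computed pointwise as in $\M$: for a $\lambda$-directed diagram $(M_i)$ in $\N$, evaluating its pointwise colimit at a $\lambda$-small product $\prod_j \ka_j$ in $\R$ gives
\[\int^{i}\prod_j M_i(\ka_j) = \prod_j \int^{i} M_i(\ka_j),\]
using that each $M_i$ preserves $\lambda$-small products together with the classical commutation of $\lambda$-directed colimits with $\lambda$-small products in $\Ab$. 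Hence the inclusion $\N \hookrightarrow \M$ creates $\lambda$-directed colimits.

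For $\fp\M \subseteq \Presl\N$, I would first note that $\fp\M \subseteq \N$: any $M \in \fp\M$ is the cokernel of a morphism between representables $\Hom_\R(\ma,-) \to \Hom_\R(\na,-)$, both of which preserve all products, and exactness of products in $\Ab$ transfers this property to the cokernel. Since $M$ is finitely presented (hence $\lambda$-presented) in $\M$, the functor $\Hom_\M(M,-)$ preserves $\lambda$-directed colimits in $\M$; by the preliminary observation, its restriction $\Hom_\N(M,-)$ preserves $\lambda$-directed colimits in $\N$, so $M \in \Presl\N$. For the reverse inclusion, given $M \in \Presl\N$, write $M = \int^{i \in I} M_i$ using the cited characterization, with the diagram $\lambda$-directed and each $M_i \in \fp\M$; this colimit is formed in $\N$ as well. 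Since $M$ is $\lambda$-presented in $\N$, the identity $1_M$ factors through some cocone component $M_j \to M$, so $M$ is a retract of $M_j \in \fp\M$. Finitely presented objects are closed under retracts, so $M \in \fp\M$.

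The only real subtlety is the preliminary observation about how $\lambda$-directed colimits in $\N$ relate to those in $\M$; once it is in place, both inclusions follow by standard universal-property arguments together with the retract-closure of $\fp\M$. No new combinatorics is required, as the classical interchange of $\lambda$-directed colimits with $\lambda$-small products in $\Ab$ carries the weight of the proof.
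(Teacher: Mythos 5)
Your inclusion $\fp\M\subseteq\Presl\N$ is correct and is exactly the direction the paper dismisses as clear: representables preserve $\lambda$-small products, exactness of products in $\Ab$ passes this to cokernels, and closure of $\N$ under pointwise $\lambda$-directed colimits (your preliminary observation, which is right) makes finitely presented modules $\lambda$-presented in $\N$. The problem is the reverse inclusion, where your argument is circular relative to what is actually available at this point. The characterization you invoke --- $M$ preserves $\lambda$-small products if and only if $M$ is a $\lambda$-directed colimit of finitely presented modules --- is only \emph{announced} in the introduction; in the body of the paper it is the corollary stated immediately after this theorem, and its proof consists of deducing it from $\Presl\N=\fp\M$. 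So you cannot cite it here. Moreover, the implication you need (product-preserving $\Rightarrow$ $\lambda$-directed colimit of finitely presented modules) is essentially equivalent to the hard inclusion itself: general theory gives you $M\in\N$ as a $\lambda$-directed colimit of $\lambda$-\emph{presented objects of} $\N$, but identifying those with finitely presented $\R$-modules is precisely the statement being proved. Your retract argument at the end is fine, but it only discharges the easy formal step; the entire nontrivial content has been assumed.

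To see what is missing, compare with the paper's proof of the hard direction: $\N$ is the class of modules orthogonal to the maps $\bigoplus_{i\in I}\Hom_\R(\na_i,-)\to\Hom_\R\left(\prod_{i\in I}\na_i,-\right)$ with $|I|<\lambda$, hence a reflective subcategory of $\M$ closed under $\lambda$-directed colimits, so locally $\lambda$-presentable, and $\Presl\N$ consists of retracts of $\N$-reflections of $\lambda$-presented modules. The key computation is that $\Hom_\M\left(\bigoplus_{i\in I}\Hom_\R(\ma_i,-),N\right)\simeq N\left(\prod_{i\in I}\ma_i\right)\simeq\Hom_\M\left(\Hom_\R\left(\prod_{i\in I}\ma_i,-\right),N\right)$ naturally in $N\in\N$, so the reflector collapses $\lambda$-small coproducts of representables to single representables, and right exactness then shows the reflection of a $\lambda$-presented module is finitely presented. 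If you want to keep your architecture (show $M\in\N$ is a $\lambda$-directed colimit of finitely presented modules, e.g.\ by showing $\fp\M/M$ is $\lambda$-filtered), you would have to carry out the analogous computation yourself: a $\lambda$-small family of maps $A_k\to M$ from finitely presented modules corresponds, via $\prod_k M\na_k\simeq M\left(\prod_k\na_k\right)$, to a single map from a finitely presented module built on $\prod_k\na_k$ through which the family factors. That step, in one form or another, is the actual content of the theorem, and it is absent from your proposal.
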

\begin{proof}
    Since $\N$ is the class of left $\R$-modules which are orthogonal to the set of all maps of the form
    \begin{displaymath}
        \bigoplus_{i\in I}\Hom_\R(\na_i,-)\to \Hom_\R\left(\prod_{i\in I}\na_i,-\right)
    \end{displaymath}
    with $|I|<\lambda$, it is a reflective $\lambda$-definable subcategory of $\M$ by~\cite[Theorem 1.39]{ar}, and in particular it is closed under $\lambda$-directed colimits.
    Therefore, $\N$ is a locally $\lambda$-presented category.
    Also, $\N$ is an abelian subcategory of $\M$.

    For any $M\in\Presl\M$, there is an exact sequence
    \begin{displaymath}
        \bigoplus_{i\in I}\Hom_\R(\ma_i,-)\to\bigoplus_{j\in J}\Hom_\R(\na_j,-)\to M\to 0,
    \end{displaymath}
    where $|I|,|J|<\lambda$.
    For any $N\in\N$, there are isomorphisms
    \begin{align*}
        \Hom_\M\left(\bigoplus_{i\in I}\Hom_\R(\ma_i,-),N\right)
        &\simeq\prod_{i\in I}\Hom_\M(\Hom_\R(\ma_i,-),N)\\
        &\simeq \prod_{i\in I}N\ma_i\\
        &\simeq N\left(\prod_{i\in I}\ma_i\right)\\
        &\simeq \Hom_\M\left(\Hom_\R\left(\prod_{i\in I}\ma_i,-\right),N\right)
    \end{align*}
    which are natural in $N$.
    This shows that, under the reflector $\M\to \N$,
    \begin{align*}
        \bigoplus_{i\in I}\Hom_\R(\ma_i,-)&\mapsto \Hom_\R\left(\prod_{i\in I}\ma_i,-\right)\\
        \bigoplus _{j\in J}\Hom_\R\left(\na_j,-\right)&\mapsto \Hom_\R\left(\prod_{j\in J}\na_j,-\right).
    \end{align*}
    Right exactness of the reflector now shows that the reflection of $M$ in $\N$ is a finitely presented left $\R$-module.
    But, since $\N$ is a reflective subcategory of $\M$ which is closed under $\lambda$-directed colimits, the $\lambda$-presented objects of $\N$ are precisely the class of summands of $\N$-reflections of $\lambda$-presented objects of $\M$.
    The class summands of finitely presented modules is closed under summands, so $\Presl\N\subseteq\fp{\M}$.
    The converse, that $\fp{\M}\subseteq\Presl\N$, is clear.
\end{proof}

\begin{cor}
    Let $\R$ be a small additive category with $\lambda$-small coproducts.
    For any left $\R$-module $M\in\R\Mod$, the following are equivalent.
    \begin{enumerate}
        \item\label{itm:fpthma} $M$ preserves $\lambda$-small products.
        \item\label{itm:fpthmb} $\fp\M/M$ is $\lambda$-filtered.
        \item\label{itm:fpthmc} $M$ is a $\lambda$-directed colimit of finitely presented modules.
    \end{enumerate}
\end{cor}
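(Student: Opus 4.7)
The plan is to use \thref{lpfp} to transfer the question into the locally $\lambda$-presentable category $\N$, and then close the short cycle $\ref{itm:fpthma} \Rightarrow \ref{itm:fpthmb} \Rightarrow \ref{itm:fpthmc} \Rightarrow \ref{itm:fpthma}$. The ingredients I would extract from \thref{lpfp} and its proof are: $\N$ is locally $\lambda$-presentable, $\Presl\N = \fp\M$, and $\N$ is closed under $\lambda$-directed colimits inside $\M$.

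For $\ref{itm:fpthma} \Rightarrow \ref{itm:fpthmb}$, I would invoke the standard fact from the theory of locally $\lambda$-presentable categories (\cite[\S 1]{ar}) that for every object $X$ of a locally $\lambda$-presentable category $\C$ the comma category $\Presl\C / X$ is $\lambda$-filtered. Applied to $\C = \N$ and $X = M \in \N$, and using that $\N$ is full in $\M$ with $\Presl\N = \fp\M$, this yields that $\fp\M / M = \Presl\N / M$ is $\lambda$-filtered.

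For $\ref{itm:fpthmb} \Rightarrow \ref{itm:fpthmc}$, I would use that $\M = \R\Mod$ is locally finitely presentable, so $M$ is the canonical colimit of the forgetful functor $\fp\M/M \to \fp\M \hookrightarrow \M$; under $\ref{itm:fpthmb}$ this is a $\lambda$-directed colimit of finitely presented modules. For $\ref{itm:fpthmc} \Rightarrow \ref{itm:fpthma}$, since $\fp\M = \Presl\N \subseteq \N$ and $\N$ is closed under $\lambda$-directed colimits in $\M$, any $M$ of the form described by $\ref{itm:fpthmc}$ lies in $\N$, i.e.\ preserves $\lambda$-small products.

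The hard part is not technical: the whole argument leans on \thref{lpfp}, and once that is in hand there is nothing to do except apply the standard comma-category characterisation of $\lambda$-presentability and close the cycle. Were one to reprove $\ref{itm:fpthma} \Rightarrow \ref{itm:fpthmb}$ from scratch, the work would be in verifying that for $M \in \N$ every $\lambda$-small diagram in $\fp\M/M$ can be dominated by a single object — essentially the factorisation-and-coequalisation argument used to show that a $\lambda$-filtered colimit of finitely presented objects has $\lambda$-filtered comma category over it, which is precisely what is packaged in \cite{ar} and recovered here from \thref{lpfp}.
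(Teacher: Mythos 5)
Your proposal is correct and follows essentially the same route as the paper: both reduce everything to \thref{lpfp} ($\Presl\N=\fp\M$), deduce that $\fp\M/M$ is $\lambda$-filtered for $M\in\N$ from the standard closure of $\lambda$-presentable objects under $\lambda$-small colimits, use the canonical colimit of $\fp\M/M\to\M$ for $\ref{itm:fpthmb}\Rightarrow\ref{itm:fpthmc}$, and close the cycle via closure of $\N$ under $\lambda$-directed colimits. The only cosmetic difference is that you cite the comma-category fact from \cite{ar} directly where the paper rederives it in one line.
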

\begin{proof}
    Since $\Presl\N=\fp\M$, $\fp\M$ is closed, as a full subcategory of $\N$, under $\lambda$-small colimits.
    It easily follows that if $M\in\N$ then $\fp\M/M$ is $\lambda$-filtered.
    Therefore, $\ref{itm:fpthma}\implies\ref{itm:fpthmb}$.

    Since $M$ is the colimit of the forgetful functor $\fp\M/M\to \M$, if $\fp\M/M$ is $\lambda$-filtered then $M$ is a $\lambda$-filtered colimit, and hence a $\lambda$-directed colimit, of finitely presented modules.
    Therefore $\ref{itm:fpthmb}\implies\ref{itm:fpthmc}$.

    Since $\fp\M=\Presl\N\subset\N$ and $\N$ is closed under $\lambda$-directed colimits we have $\ref{itm:fpthmc}\implies\ref{itm:fpthma}$.
\end{proof}

\begin{cor}
    $\funl\N=\fun\M|_\N \ : \ = \setc{F|_\N}{F\in\fun\M}.$ Equivalently, for each $F\in\funl\N$ there is a pp-pair for left $\R$-modules such that $\phi/\psi$ such that $F\simeq F_{\phi/\psi}|_\N$.
\end{cor}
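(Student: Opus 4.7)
The plan is to use \thref{fpacc} applied to the locally $\lambda$-presentable category $\N$, identify the $\lambda$-presented generators with finitely presented $\R$-modules via \thref{lpfp}, and then lift the presentation back to a finitely presented functor on $\M$, which by the previous subsection corresponds to a pp-pair.

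First I would dispose of the easy inclusion $\fun\M|_\N\subseteq \funl\N$: since $\N$ is closed under $\lambda$-directed colimits (being reflective in $\M$ with reflector preserving $\lambda$-directed colimits, as argued in the proof of \thref{lpfp}) and under products (products commute with products, so a product of modules preserving $\lambda$-small products still does so), any $G\in\fun\M$ restricts to a functor $\N\to\Ab$ which still preserves $\lambda$-directed colimits and products.

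For the nontrivial inclusion, let $F\in\funl\N$. The category $\N$ is locally $\lambda$-presentable (by the first paragraph of the proof of \thref{lpfp}) and in particular $\lambda$-accessible with products, so \thref{fpacc} applies: there is an exact sequence
\begin{displaymath}
    \xymatrix{\Hom_\N(B,-)\ar[r]^{f^*}&\Hom_\N(A,-)\ar[r]&F\ar[r]&0}
\end{displaymath}
in $(\N,\Ab)$ with $A,B\in\Presl\N$. By \thref{lpfp}, $A$ and $B$ lie in $\fp\M$, so the morphism $f:A\to B$ is a morphism in $\fp\M\subseteq\M$, giving rise to a morphism
\begin{displaymath}
    f^*:\Hom_\M(B,-)\to\Hom_\M(A,-)
\end{displaymath}
in $(\M,\Ab)$ whose restriction to $\N$ is the presenting map of $F$. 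Let $F'$ be its cokernel in $(\M,\Ab)$: then $F'$ is finitely presented on $\M$, so $F'\in\fp{\M,\Ab}=\fun\M$ by the Corollary following \thref{fpacc}.

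Since cokernels in both $(\M,\Ab)$ and $(\N,\Ab)$ are computed objectwise, the restriction functor $(\M,\Ab)\to(\N,\Ab)$ is exact; applying it to the presentation of $F'$ produces a cokernel diagram in $(\N,\Ab)$ with the same presenting map as that of $F$, so $F'|_\N\simeq F$ by the universal property. Finally, by the discussion in Section~\ref{subsec:ppp}, any $F'\in\fun\M$ is of the form $F_{\phi/\psi}$ for some pp-pair $\phi/\psi$ for left $\R$-modules, yielding $F\simeq F_{\phi/\psi}|_\N$ as required.

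The only delicate point is verifying the identification $\Presl\N=\fp\M$ in order to lift the presentation to $\M$, but this is exactly \thref{lpfp}, so there is no real obstacle; the remaining steps are formal applications of results already established.
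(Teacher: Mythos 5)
Your proposal is correct and follows exactly the route the paper intends for this corollary (which it leaves unproved as an immediate consequence): apply \thref{fpacc} to the locally $\lambda$-presentable category $\N$, use $\Presl\N=\fp\M$ from \thref{lpfp} to lift the presenting morphism to $\M$, and invoke the pp-pair description of $\fun\M$ from Section~\ref{subsec:ppp}, with the easy inclusion handled by closure of $\N$ under products and $\lambda$-directed colimits. No gaps; the objectwise-cokernel argument for $F'|_\N\simeq F$ is exactly the right justification.
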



\subsection{The $\lambda$-pure exact structure}\label{subsec:the-lambda-pure-exact-structure}
\begin{defn}
    Let $\C$ be a locally $\lambda$-presentable additive category. A sequence in $\C$,
    \begin{displaymath}
        \xymatrix{0\ar[r]&L\ar[r]^f&M\ar[r]^g&N\ar[r]&0,}
    \end{displaymath}
    is said to be \emph{$\lambda$-pure exact} if
    \begin{displaymath}
        \xymatrix{0\ar[r]&FL\ar[r]^{Ff}&FM\ar[r]^{Fg}&FN\ar[r]&0}
    \end{displaymath}
    is exact for any $F\in\funl\C$. When $\lambda=\omega$, we simply say \emph{pure exact}.
\end{defn}

\begin{defn}
    A map $g$ in a $\lambda$-accessible category $\C$ is called a \emph{$\lambda$-pure epimorphism} if $\Hom_\C(A,g)$ is surjective for all $A\in\Presl\C$.
\end{defn}

\begin{thm}
    \thlabel{pes}
    Let $\C$ be a locally $\lambda$-presented additive category. For a sequence
    \begin{displaymath}
        \xymatrix{\xi: & 0\ar[r]&L\ar[r]^f&M\ar[r]^g&N\ar[r]&0}
    \end{displaymath}
    in $\C$, the following are equivalent:
    \begin{enumerate}
        \item\label{itm:hompure} $\Hom_\C(A,\xi)$ is exact for any $A\in\Presl\C$.
        \item\label{itm:seqpure} $\xi$ is $\lambda$-pure.
        \item \label{itm:xipure} $\xi$ is a $\lambda$-directed colimit of split exact sequences (with first term $L$).
        \item\label{itm:fpure} $f$ is a $\lambda$-pure monomorphism and the sequence is a kernel-cokernel pair.
        \item \label{itm:gpure} $g$ is a $\lambda$-pure epimorphism and the sequence is a kernel-cokernel pair.
    \end{enumerate}
    The class of pure-exact sequences form an exact structure on $\C$.
    \begin{proof}
        Omitted -- the same moves can be made as for the case $\lambda=\omega$.
    \end{proof}
\end{thm}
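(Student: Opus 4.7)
The plan is to establish the cycle $\ref{itm:xipure}\Rightarrow\ref{itm:seqpure}\Rightarrow\ref{itm:hompure}\Rightarrow\ref{itm:xipure}$ first, and then to relate $\ref{itm:fpure}$ and $\ref{itm:gpure}$ to $\ref{itm:hompure}$. The first two implications are formal: every $F\in\funl\C$ preserves $\lambda$-directed colimits, every additive functor preserves split short exact sequences, and $\lambda$-directed colimits are exact in $\Ab$, which yields $\ref{itm:xipure}\Rightarrow\ref{itm:seqpure}$; while $\Hom_\C(A,-)\in\funl\C$ for each $A\in\Presl\C$ by \thref{fpacc}, which gives $\ref{itm:seqpure}\Rightarrow\ref{itm:hompure}$.

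The main step is $\ref{itm:hompure}\Rightarrow\ref{itm:xipure}$. I would present $N$ as the canonical $\lambda$-directed colimit $N\simeq\int^{(A,a)\in\Presl\C/N}A$ and, for each such pair $(A,a)$, form the pullback $P_a=M\times_NA$; the kernel of $P_a\to A$ is still $L$. By $\ref{itm:hompure}$, the map $\Hom_\C(A,g)$ is surjective, so $a$ admits a lift $\tilde{a}:A\to M$, and the pair $(\tilde{a},1_A)$ supplies a section of $P_a\to A$, exhibiting $0\to L\to P_a\to A\to 0$ as a split short exact sequence. Because $\lambda$-directed colimits commute with finite limits in a locally $\lambda$-presentable category, we obtain $M\simeq M\times_NN\simeq\int^{(A,a)}P_a$, identifying $\xi$ as the colimit of these split sequences computed in the category of short exact sequences.

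For $\ref{itm:hompure}\Leftrightarrow\ref{itm:gpure}$, the density of $\Presl\C$ in $\C$ upgrades hom-exactness against $\Presl\C$ to the statement that $\xi$ is a kernel-cokernel pair, while the surjectivity of $\Hom_\C(A,g)$ for $A\in\Presl\C$ is the definition of $g$ being a $\lambda$-pure epimorphism; conversely, on a kernel-cokernel pair whose cokernel is a $\lambda$-pure epimorphism, the functor $\Hom_\C(A,-)$ yields left exactness from the kernel property and right exactness from the $\lambda$-pure epi hypothesis. For $\ref{itm:hompure}\Leftrightarrow\ref{itm:fpure}$, I would invoke the characterisation from~\cite{ar} that $\lambda$-pure monomorphisms in locally $\lambda$-presentable categories are precisely the $\lambda$-directed colimits of split monomorphisms between $\lambda$-presented objects: the diagram produced in $\ref{itm:xipure}$ exhibits $f$ as such a colimit, and conversely such a presentation of $f$ produces, by taking cokernels, a presentation of $\xi$ by split short exact sequences, recovering $\ref{itm:xipure}$. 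The claim that these sequences form an exact structure then follows from $\ref{itm:xipure}$: the trivial exact structure of split short exact sequences is closed under $\lambda$-directed colimits in the arrow category, and Quillen's axioms transport through such colimits.

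The only non-routine step is $\ref{itm:hompure}\Rightarrow\ref{itm:xipure}$; the essential input is the commutativity of $\lambda$-directed colimits with finite limits, which sidesteps the need to choose the lifts $\tilde{a}$ coherently and instead lets the pullback construction do the work.
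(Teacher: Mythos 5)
The paper gives no argument at all (it defers to ``the same moves as for $\lambda=\omega$''), and your outline is precisely those standard moves, so on the main equivalences you are doing what the author intends. The cycle (c)$\Rightarrow$(b)$\Rightarrow$(a)$\Rightarrow$(c) is sound: $\Hom_\C(A,-)\in\funl\C$ for $A\in\Presl\C$, split sequences are preserved by any additive functor and $\lambda$-directed colimits are exact in $\Ab$, and the pullback argument works because $\lambda$-directed colimits commute with finite limits in a locally $\lambda$-presentable category, giving $\int^{(A,a)}(M\times_N A)\simeq M\times_N N\simeq M$. The use of the Ad\'amek--Rosick\'y characterisation of $\lambda$-pure monomorphisms as $\lambda$-directed colimits of split monomorphisms for (d) is also the right tool (note only that the presentation you recover from it has varying first terms, so strictly it yields (b) rather than the parenthesised form of (c); the cycle still closes).

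Two places need tightening. First, in (a)$\Rightarrow$(c) the kernel of $P_a\to A$ is $\Ker(g)$, not $L$ by fiat: you must first check that (a) forces $f=\Ker(g)$ (test against $\Presl\C$ and use that every object is a $\lambda$-directed colimit of $\lambda$-presented ones). Similarly, in (a)$\Leftrightarrow$(e), ``density upgrades hom-exactness to a kernel--cokernel pair'' is too quick for the cokernel half, since $\C$ need not be abelian; $g=\Coker(f)$ is most cleanly extracted from the colimit presentation (c) you have already established. Second, the final assertion that these sequences form an exact structure is claimed, not proved: ``Quillen's axioms transport through such colimits'' is not an argument, because the axioms concern composition of admissible monomorphisms and pushouts of admissible monomorphisms along \emph{arbitrary} morphisms (dually pullbacks of admissible epimorphisms), none of which follows formally from closure of the class under $\lambda$-directed colimits. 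The intended argument is nearby, and it is exactly where the clause ``with first term $L$'' in (c) earns its keep: write the sequence as a $\lambda$-directed colimit of split sequences with constant first term $L$, push out each split sequence along $L\to L'$ (still split), and use that pushouts commute with colimits; composition closure of $\lambda$-pure monomorphisms follows directly from (a) or (b), and the epimorphism side is dual via (e). With these repairs your outline is a complete version of the proof the paper omits.
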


\begin{ex}
    Let $R$ be a ring. For a family $A_i\in R\Mod$ $(i\in I)$,
    \begin{displaymath}
        \prod^\lambda_{i\in I}A_i \ : \ =\setc{(x_i)_{i\in I}\in \prod_{i\in I}}{\left|\setc{i\in I}{x_i\neq 0}\right|<\lambda}\subseteq \prod_{i\in I}A_i
    \end{displaymath}
    is a $\lambda$-pure submodule of the product. 
    
    When $R=\mathbb Z$, each $A_i=\mathbb Z$, $|I|$ is infinite and non-measurable, and $\lambda=\aleph_0$, it is known that this inclusion is not split: In that case, we have Zeeman's result \cite[Theorem 2(ii), p. 196, in the case $A=\bigoplus_I\ZZ$, $B=C=\ZZ$]{zeeman} which tells us $\Hom(\ZZ^I,Z)\simeq \bigoplus_I\ZZ$; if the inclusion $\bigoplus_I\ZZ\to\ZZ^I$ were split, we would have $\ZZ^I$ as a summand of $\bigoplus_I\ZZ$, while $|\ZZ^I|=2^{|I|}>|I|=\left|\bigoplus_I\ZZ\right|$.
\end{ex}

\begin{prop}
    Let $\R$ be an additive category with $\lambda$-small products and let $\N$ denote the category of left $\R$-modules which preserve $\lambda$-small products. 
    
    For a submodule $N\subseteq M$ with $N,M\in\N$, if $N$ is a pure submodule of $M$ (i.e. a pure subobject $N\subset M$ in $\R\Mod$) then it is a $\lambda$-pure submodule of $M$.
\end{prop}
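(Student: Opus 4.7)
The plan is to reduce directly to the identification $\Presl\N = \fp\M$ from \thref{lpfp}: under this identification, the test objects for $\lambda$-purity in $\N$ coincide exactly with the test objects for ordinary purity in $\M = \R\Mod$, so the two purity conditions for the inclusion $N \hookrightarrow M$ collapse to the same statement. First I would note that $\N$ is locally $\lambda$-presentable---this is established in the proof of \thref{lpfp}---so $\lambda$-purity in $\N$ is actually defined. By the definition of $\lambda$-pure monomorphism preceding \thref{mesharpen}, the inclusion $f : N \hookrightarrow M$ is a $\lambda$-pure monomorphism in $\N$ precisely when, for every $g : C \to D$ with $C, D \in \Presl\N$ and every $u : C \to N$ in $\N$, whenever $fu$ factors through $g$, so does $u$.

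Next I would exploit fullness of $\N$ in $\M$ to translate this condition: a morphism between objects of $\N$ is just a morphism in $\M$, and a factorization of a map into $N$ (or into $M$) through an object $D \in \N$ is a factorization in $\N$ if and only if it is a factorization in $\M$. Combined with $\Presl\N = \fp\M$, the condition above becomes the statement that, for every $g : C \to D$ in $\fp\M$ and every $u : C \to N$ in $\M$, if $fu$ factors through $g$ in $\M$, then so does $u$. This is \emph{verbatim} the assertion that $f$ is a pure (i.e., $\omega$-pure) monomorphism in $\M = \R\Mod$, which is precisely our hypothesis.

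There is no serious obstacle once \thref{lpfp} is in hand; the entire content of the argument is the coincidence of the two test classes and the observation that factorizations through an object of $\N$ are insensitive to whether one works in $\N$ or in $\M$.
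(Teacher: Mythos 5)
Your proof is correct, but it runs along a different track from the paper's. You work entirely with the elementary factorization definition of a $\lambda$-pure monomorphism: since $\Presl\N=\fp\M$ (\thref{lpfp}) and $\N$ is a full subcategory of $\M$ containing $\fp\M$, the test morphisms $g:C\to D$ and the factorizations through $D$ are literally the same whether read in $\N$ or in $\M$, so "$\lambda$-pure mono in $\N$" and "pure mono in $\M$" become the same condition (indeed you get an equivalence, not just the stated implication). The paper instead argues at the level of the whole sequence $0\to N\to M\to M/N\to 0$: it notes that $M/N$ also lies in $\N$ (as $\N$ is an abelian subcategory of $\M$), that purity in $\M$ means every $F\in\fun\M$ keeps the sequence exact, and that $\funl\N=\fun\M|_\N$, so every functor in $\funl\N$ does too, which is the functorial definition of $\lambda$-pure exactness in $\N$. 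The two arguments establish the conclusion in two a priori different but equivalent senses --- yours via the monomorphism/factorization criterion, the paper's via the $\lambda$-pure exact structure --- with \thref{pes} supplying the bridge (its proof is omitted in the paper, so neither route is fully self-contained on this point). What yours buys is economy of inputs: you need only \thref{lpfp} and fullness, not the identification $\funl\N=\fun\M|_\N$ nor closure of $\N$ under quotients; what the paper's buys is that it directly produces the $\lambda$-pure exact sequence in the form used in the rest of that section.
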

\begin{proof}
 The sequence
        \begin{displaymath}
            \xymatrix{0\ar[r]&N\ar[r]&M\ar[r]&M/N\ar[r]&0}
        \end{displaymath}
        is pure in $\M=\R\Mod$. However, $N,M,M/N\in\N$ as $\N$ is an abelian subcategory of $\N$. Therefore, the exactness of this sequence is preserved by any functor in $\fun\M|_\N=\funl\N$. Therefore, the sequence is $\lambda$-pure.
\end{proof} 

\begin{defn}
    We say that an object in a locally $\lambda$-presented additive category $\C$ is \emph{$\lambda$-pure-injective} (respectively, \emph{$\lambda$-pure-projective}) if it is injective (respectively, projective) with respect to the $\lambda$-pure exact structure on $\C$. When $\lambda=\omega$, we simply say \textit{pure-injective} (respectively \textit{pure-projective}).
\end{defn}

\begin{thm}
    Let $\C$ be a locally $\lambda$-presented category and let $\R=(\Presl\C)\op$. For any object $C\in\C$,
    \begin{displaymath}
        C\text{ is }\lamdba\text{-pure-injective in }\C \iff \Hom_\C(-,C)|_\R\text{ is }\lamdba\text{-pure-injective in }\R\Mod.
    \end{displaymath}
\end{thm}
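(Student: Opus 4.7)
The plan is to move the problem across the embedding
\[
i : \C \hookrightarrow \R\Mod, \qquad C \mapsto \Hom_\C(-, C)|_\R,
\]
which, by the remark preceding \thref{lpfp}, is fully faithful with image the full subcategory of left $\R$-modules preserving $\lambda$-small limits. In particular $i$ preserves $\lambda$-directed colimits and products, and, being the inclusion of a small-orthogonality class into a locally presentable category, admits a left adjoint reflector $L : \R\Mod \to \C$.

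The key lemma I would prove first is that \emph{both $i$ and $L$ take $\lambda$-pure exact sequences to $\lambda$-pure exact sequences}. By \thref{pes}\ref{itm:xipure}, a $\lambda$-pure exact sequence is precisely a $\lambda$-directed colimit of split short exact sequences; both $i$ and $L$ are additive (so preserve splittings) and preserve $\lambda$-directed colimits ($i$ by the preceding paragraph, $L$ as a left adjoint), so each carries $\lambda$-directed colimits of split exact sequences to $\lambda$-directed colimits of split exact sequences, i.e.\ to $\lambda$-pure exact sequences.

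For the direction $(\Leftarrow)$, given a $\lambda$-pure exact sequence $0 \to L \to M \to N \to 0$ in $\C$ and the assumption that $iC$ is $\lambda$-pure-injective in $\R\Mod$, the key lemma gives that $0 \to iL \to iM \to iN \to 0$ is $\lambda$-pure exact in $\R\Mod$, hence $\Hom_{\R\Mod}(-, iC)$ is exact on it; full faithfulness of $i$ then yields that $\Hom_\C(-, C)$ is exact on the original sequence. For the direction $(\Rightarrow)$, given a $\lambda$-pure exact sequence $0 \to L' \to M' \to N' \to 0$ in $\R\Mod$ and the assumption that $C$ is $\lambda$-pure-injective in $\C$, the key lemma gives that $0 \to LL' \to LM' \to LN' \to 0$ is $\lambda$-pure exact in $\C$ (in particular a kernel-cokernel pair, so that $LL' \to LM'$ is mono), $\Hom_\C(-, C)$ is exact on it by hypothesis, and transporting along the adjunction $L \adj i$ yields that $\Hom_{\R\Mod}(-, iC)$ is exact on the original sequence, as required.

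The only genuinely delicate step is the portion of the key lemma concerning $L$: a priori, left adjoints need not preserve monomorphisms, so one might worry about whether $LL' \to LM'$ remains injective. The point is that the colimit characterisation \thref{pes}\ref{itm:xipure} lets us sidestep this worry completely --- $L$ is applied to a $\lambda$-directed colimit diagram of \emph{split} short exact sequences, whose splittings $L$ preserves, and then the resulting $\lambda$-directed colimit of split exact sequences in $\C$ is $\lambda$-pure exact, and so a kernel-cokernel pair by \thref{pes}\ref{itm:fpure} automatically.
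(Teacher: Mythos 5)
Your proposal is correct and follows essentially the same route as the paper: view $\C$ as the reflective subcategory of $\R\Mod$ of modules preserving $\lambda$-small limits, use that the reflector (and the inclusion) preserve $\lambda$-directed colimits of split exact sequences to transport $\lambda$-pure exact sequences, and transfer Hom-exactness across the adjunction and full faithfulness. The only difference is cosmetic: you spell out the ``clear'' direction and isolate the transport of $\lambda$-purity as an explicit lemma, which the paper handles in a single parenthetical remark.
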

\begin{proof}
    Consider $\C$ to be the reflective full subcategory of $\R\Mod$ consisting of the modules which preserve $\lambda$-small limits. Let $R:\R\Mod\to\C$ be the left adjoint of the inclusion $\C\subseteq\R\Mod$.

    If $C$ is $\lambda$-pure-injective in $\R\Mod$, it is clearly $\lambda$-pure-injective in $\C$.

    Suppose $C$ is $\lambda$-pure-injective in $\C$. If
    \begin{displaymath}
        \xymatrix{0\ar[r]&L\ar[r]^f&M\ar[r]^g&N\ar[r]&0}
    \end{displaymath}
    is a $\lambda$-pure-exact sequence in $\R\Mod$ then
    \begin{displaymath}
        \xymatrix{0\ar[r]&RL\ar[r]^{Rf}&RM\ar[r]^{Rg}&RN\ar[r]&0}
    \end{displaymath}
    is $\lambda$-pure-exact in $\C$ since $R$ preserves $\lambda$-directed colimits (and therefore sequences that are $\lambda$-directed colimits of split sequences). There is an isomorphism of sequences
    \begin{displaymath}
        \xymatrix{0\ar[r]&\Hom_\C(RN,C)\ar[d]_{\simeq}\ar[rr]^{\Hom_\C(Rg,C)}&&\Hom_\C(RM,C)\ar[d]^{\simeq}\ar[rr]^{\Hom_\C(Rf,C)}&&\Hom_\C(RL,C)\ar[d]^{\simeq}\ar[r]&0
        \\
        0\ar[r]&\Hom_\C(N
        ,C)\ar[rr]^{\Hom_\C(g,C)}&&\Hom_\C(M,C)\ar[rr]^{\Hom_\C(f,C)}&&\Hom_\C(L,C)\ar[r]&0}
    \end{displaymath}
    with the top sequence exact, so the bottom sequence also is exact. Therefore $C$ is pure-injective in $\R\Mod$.

\end{proof}

    \section{The Auslander-Gruson-Jensen 2-functor}\label{sec:the-auslander-gruson-jensen-2-functor}
    \subsection{The colax 2-functor}\label{subsec:the-colax-2-functor}

\begin{defn}
    We write $\MODCAT$ for the following $2$-category:
    \begin{itemize}
        \item $0$-cells given by small pre-additive categories.
        \item $1$-cells $\R\to\S$ given by accessible additive functors $\R\Mod\to\S\Mod$.
        \item $2$-cells given by natural transformations.
    \end{itemize}
    We write $\MODCATCO$ for $\MODCAT$ with the $2$-cells reversed.
\end{defn}
\begin{defn}
    For any accessible functor $F:\R\Mod\to\Ab$, the \textbf{Auslander-Gruson-Jensen dual} of $F$ is the functor $D F:\Mod\R\to\Ab$ defined by
    \begin{displaymath}
    (D F)
        N=(F,N\otimes_\R -).
    \end{displaymath}
    for any $N\in\Mod\R$.
\end{defn}
\begin{rmk}
    Let $\R$ be a pre-additive category.
    
    Auslander, and independently Gruson and Jensen, established an equivalence
    \begin{displaymath}
    (\fun{\R\Mod})
        \op\to \fun{\Mod\R}:F\mapsto D_\R F
    \end{displaymath}
    given by
    \begin{displaymath}
    (DF)
        M=(F,M\otimes_\R -)
    \end{displaymath}
    with inverse also given by Auslander-Gruson-Jensen dual.
    This equivalence is the \textbf{Auslander-Gruson-Jensen duality}.

    In \cite{dean3} it was observed that this equivalence extends to a functor
    \begin{displaymath}
        \funi{\R\Mod}\op\to \Fun{\Mod\R}:F\mapsto D F
    \end{displaymath}
    with a fully faithful right and left adjoint (and hence, to a recollement), with the right adjoint given by Auslander-Gruson-Jensen dual, and the left adjoint being given by $L_0$ ($0$-th left derived functor) of the right adjoint.

    Now we further generalise the Auslander-Gruson-Jensen dual by showing that it defines a lax 2-functor
    \begin{displaymath}
        D:\MODCATCO\to\MODCAT.
    \end{displaymath}
\end{rmk}

\begin{prop}
    \thlabel{thm:dualaccessible}
    If $F:\R\Mod\to\Ab$ is $\lambda$-accessible then $D F:\Mod\R\to\Ab$ is $\mu$-accessible, where $\mu$ is any regular cardinal such that $F\in\Presm{\Funl{\R\Mod}}$.
    In particular, if $F$ is accessible then $D F$ is accessible.

\end{prop}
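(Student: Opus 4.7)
The plan is to view $(DF)N=(F, N\otimes_\R -)$ as a $\Hom$ in $\Funl{\R\Mod}$, and then push a $\mu$-directed colimit through this $\Hom$ using the hypothesis that $F$ is $\mu$-presented there. For any $N\in\Mod\R$, the functor $N\otimes_\R -:\R\Mod\to\Ab$ is a left adjoint, so it preserves every colimit, in particular $\lambda$-directed ones; therefore $N\otimes_\R -$ lies in $\Funl{\R\Mod}$, and $DF$ factors as the composite of $N\mapsto N\otimes_\R -$ with $(F,-):\Funl{\R\Mod}\to\Ab$.

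The central technical step is to show that tensoring carries $\mu$-directed colimits in $\Mod\R$ to $\mu$-directed colimits in $\Funl{\R\Mod}$. Given $N=\int^{i\in I}N_i$ with $I$ being $\mu$-directed, preservation of colimits by $-\otimes_\R M$ for each $M\in\R\Mod$ yields the pointwise identity $N\otimes_\R M=\int^{i\in I}(N_i\otimes_\R M)$ in $\Ab$. One must then verify that this pointwise directed colimit is the colimit inside $\Funl{\R\Mod}$, equivalently, that $\Funl{\R\Mod}$ is closed under directed colimits in $(\R\Mod,\Ab)$. This holds because directed colimits commute with $\lambda$-directed colimits in $\Ab$, so the pointwise directed colimit of $\lambda$-accessible functors remains $\lambda$-accessible.

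Combining these, since $F$ is $\mu$-presented in $\Funl{\R\Mod}$, $(F,-)$ preserves $\mu$-directed colimits, so
\[(DF)N \;\simeq\; \left(F,\int^{i\in I}N_i\otimes_\R -\right) \;\simeq\; \int^{i\in I}(DF)N_i,\]
proving $DF$ is $\mu$-accessible. For the ``in particular'' clause, I would note that restriction to $\Presl{\R\Mod}$ gives an equivalence between $\Funl{\R\Mod}$ and the locally presentable category of additive functors $\Presl{\R\Mod}\to\Ab$, so every accessible $F$ is $\mu$-presented there for some $\mu$. The main obstacle is the closure-under-directed-colimits point; once that is in hand, $\mu$-presentability of $F$ yields the result immediately.
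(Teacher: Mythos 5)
Your proposal is correct and follows essentially the same route as the paper: factor $DF$ as the tensor embedding $\Mod\R\to\Funl{\R\Mod}$, $N\mapsto N\otimes_\R-$, followed by $(F,-):\Funl{\R\Mod}\to\Ab$, and observe that both preserve $\mu$-directed colimits because $F$ is $\mu$-presented in $\Funl{\R\Mod}$ (the paper justifies the existence of such a $\mu$ by noting $\Funl{\R\Mod}$ is a module category, which is the same fact you invoke via restriction to $\Presl{\R\Mod}$). You merely spell out details the paper leaves implicit, such as closure of $\Funl{\R\Mod}$ under pointwise directed colimits.
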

\begin{proof}
    Suppose $F$ preserves $\lambda$-directed colimits.
    Then $F\in \Funl{\R\Mod}$, which is a module category.
    Therefore $F$ is $\mu$-presented for some regular cardinal $\mu$.

    Since $D_\R F$ is the composition of the two functors
    \begin{displaymath}
        \xymatrix{\Mod\R\ar[rr]^{\hspace{-5mm}\text{tensor}}&&\Funl{\R\Mod}\ar[rr]^{\hspace{5mm}(F,-)}&&\Ab}
    \end{displaymath}
    which both preserve $\mu$-directed colimits.
\end{proof}

\begin{defn}
    Given any accessible functor $F:\R\Mod\to\S\Mod$, we define its \textbf{Auslander-Gruson-Jensen dual} to be the functor $DF:\Mod\R\to\Mod\S$ defined by
    \begin{displaymath}
        ((DF)N)S=D((F-)S)N
    \end{displaymath}
    for any $N\in\Mod\R$ and $S\in\S$, where the functor $(F-)S:\R\Mod\to\Ab$ is defined by $M\mapsto (FM)S$.
\end{defn}

\begin{cor}
    For any accessible additive functor $F:\R\Mod\to\S\Mod$, $DF:\Mod\R\to\Mod\S$ is accessible.
\end{cor}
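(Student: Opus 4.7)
The plan is to produce a single regular cardinal $\mu$ such that $DF$ preserves $\mu$-directed colimits, by applying \thref{thm:dualaccessible} pointwise at each $S\in\S$ and leveraging the smallness of $\S$ to get uniform control of the resulting accessibility rank. Fix $\lambda$ such that $F$ preserves $\lambda$-directed colimits. For every $S\in\S$, the evaluation $\ev_S:\S\Mod\to\Ab$ preserves all colimits, so the composite $(F-)S=\ev_S\circ F:\R\Mod\to\Ab$ lies in $\Funl{\R\Mod}$. Moreover, the assignment $S\mapsto (F-)S$ is manifestly functorial, so we have a functor $\S\to\Funl{\R\Mod}$.

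The category $\Funl{\R\Mod}$ is a module category (it is equivalent to additive functors $\Presl{\R\Mod}\to\Ab$), hence locally presentable. Since $\S$ is small, the image of the above functor is a small set of objects of $\Funl{\R\Mod}$, and in any locally presentable category such a set is contained in $\Presm{-}$ for some sufficiently large regular cardinal $\mu$. Choose any such $\mu$, so that $(F-)S\in\Presm{\Funl{\R\Mod}}$ for all $S\in\S$. By \thref{thm:dualaccessible} applied to each $(F-)S$, the functor $(DF-)S=D((F-)S):\Mod\R\to\Ab$ is $\mu$-accessible for every $S\in\S$.

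Finally, directed colimits in the module category $\Mod\S$ are computed pointwise, so a functor $G:\Mod\R\to\Mod\S$ preserves a given directed colimit in $\Mod\R$ iff $(G-)S:\Mod\R\to\Ab$ does so for every $S\in\S$. Combining this with the previous step yields that $DF$ preserves $\mu$-directed colimits, i.e.\ $DF$ is $\mu$-accessible, which is what we wanted. The only step requiring real thought is the uniformisation of $\mu$ across all $S\in\S$; this is the obstacle one would hit if $\S$ were not small, but smallness makes the image of $\S\to\Funl{\R\Mod}$ a legitimate set, after which local presentability of $\Funl{\R\Mod}$ does the rest.
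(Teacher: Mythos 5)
Your proof is correct and follows essentially the same route as the paper's: apply \thref{thm:dualaccessible} to each $(F-)S=\ev_S\circ F\in\Funl{\R\Mod}$ and use the smallness of $\S$ to choose a single regular cardinal bounding the presentability ranks of all these functors, so that $DF$ is accessible since ($\mu$-)directed colimits in $\Mod\S$ are computed pointwise. The only cosmetic difference is that the paper picks a cardinal $\mu_S$ for each $S$ and then a $\nu$ above all of them, whereas you uniformise at the outset; the content is identical.
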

\begin{proof}
    Suppose $F:\R\Mod\to\S\Mod$ is $\lambda$-presentable.
    For each $S\in\S$, if $(F-)S:\R\Mod\to\Ab$ is $\mu_S$-presentable in $\Funl{\R\Mod}$ then $D((F-)S)$ is $\mu_S$-accessible by \thref{thm:dualaccessible}.
    For any regular cardinal $\nu$ such that $\nu>\mu_S$ for all $S\in\S$, $DF$ is $\nu$-accessible.
\end{proof}

\begin{defn}
    \begin{itemize}
        \item For any $0$-cell $\R\in\MODCAT$ we write $D\R=\R\op$.
        \item For any $1$-cell $F:\R\to\S\in\MODCAT$ we write $DF:D\R\to D\S\in\MODCAT$ for Auslander-Gruson-Jensen dual of $F$.
        \item For any $2$-cell $\alpha:F\to G$ the induced $2$-cell $D\alpha:DG\to DF$.
    \end{itemize}
\end{defn}

\begin{prop}
    For $1$-cells $F:\R\to\S$, $G:\S\to\T$, there is a canonical natural transformation
    \begin{displaymath}
        \delta_{G,F}:(DG)(DF)\to D(FG)
    \end{displaymath}
    which is natural in $G$ and $F$.
\end{prop}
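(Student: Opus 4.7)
The plan is to build $\delta_{G,F}$ from a single canonical evaluation pairing and then check naturality in all variables. First I would unpack the values at $N \in \Mod\R$, $T \in \T$: the definitions of $DF$, $DG$ and the tensor-Hom description of $D$ on functors to $\Ab$ give, respectively,
\[
((DG)(DF)N)(T) \simeq \Hom\bigl((G-)T,\ ((DF)N) \otimes_\S -\bigr)
\]
and
\[
(D(FG)N)(T) \simeq \Hom\bigl((G-)T \circ F,\ N \otimes_\R -\bigr),
\]
where the Homs are natural transformations in $(\S\Mod,\Ab)$ and $(\R\Mod,\Ab)$. So at each $(N,T)$ I must produce a function between these Hom-sets, functorial in $N$ and $T$.

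The key step is a canonical evaluation pairing
\[
\epsilon_{N,M}\colon ((DF)N) \otimes_\S FM \longrightarrow N \otimes_\R M, \qquad M \in \R\Mod,
\]
natural in $N \in \Mod\R$ and $M \in \R\Mod$. An element of $((DF)N)(S) = \Hom((F-)S,\ N\otimes_\R -)$ is a natural transformation $\alpha$, and for $y \in (FM)(S)$ I send $\alpha \otimes y$ to $\alpha_M(y) \in N \otimes_\R M$. The nontrivial check is that this descends to the coend defining $\otimes_\S$: for $s\colon S \to S'$, $\alpha \in ((DF)N)(S')$ and $y \in (FM)(S)$, the right $\S$-action on $(DF)N$ gives $(\alpha \cdot s)_M = \alpha_M \circ (FM)(s)$, so $(\alpha\cdot s)_M(y) = \alpha_M(s\cdot y)$, which is exactly the coend relation. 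Given this, the map $\delta_{G,F}$ sends $\beta\colon (G-)T \to ((DF)N)\otimes_\S -$ to the natural transformation whose component at $M$ is the composite
\[
(G(FM))(T) \xrightarrow{\beta_{FM}} ((DF)N) \otimes_\S FM \xrightarrow{\ \epsilon_{N,M}\ } N \otimes_\R M.
\]
Naturality of $\delta_{G,F}(\beta)$ in $M$ follows from naturality of $\beta$ and $\epsilon$, and an entirely formal computation shows that $\delta_{G,F}$ is natural in $T$ (respecting the right $\T$-module structures) and in $N$ (giving a natural transformation of functors $\Mod\R \to \Mod\T$).

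The main obstacle I expect is bookkeeping rather than mathematical content: keeping track of the coend $\otimes_\S$ through the naturality diagrams, and in particular the well-definedness of $\epsilon_{N,M}$ using the explicit forms of the pre- and post-composition $\S$-actions on $(DF)N$ and $FM$. Naturality of $\delta_{G,F}$ in the $1$-cells $F$ and $G$ (with respect to $2$-cells between them) is then a straightforward manipulation: a $2$-cell $G \to G'$ pre-composes with $\beta$, and a $2$-cell $F \to F'$ pre-composes at the level of $FM$ and acts on $(DF)N$ via the contravariant action of $D$ on $2$-cells, and both actions commute with $\epsilon$ by construction. Once $\epsilon$ is in hand, every remaining verification is a routine Yoneda-style chase.
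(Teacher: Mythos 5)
Your construction is exactly the paper's: your evaluation pairing $\epsilon_{N,M}$ is the paper's map $\beta_{N,F}\colon (DF)N\otimes_\S(F-)\to N\otimes_\R-$ (defined elementwise by $\alpha\otimes x\mapsto\alpha_M(x)$), and composing with $\beta_{FM}$ is precisely the paper's whiskering $\alpha * F$ followed by $\beta_{N,F}$. The proposal is correct and, if anything, more careful than the paper, since you spell out the coend well-definedness that the paper subsumes under its omitted naturality checks.
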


\begin{proof}
    Let $N\in\Mod\R,T\in\T$ be given.

    For a natural transformation $\alpha:\Phi\to\Psi$ between functors $\Phi,\Psi:\S\Mod\to\Ab$, we will write $\alpha*F$ for the induced natural transformation $\Phi F\to \Psi F$.

    We need to define a morphism

    \begin{align*}
        ((DG)(DF)N)T&=((DG)((DF)N))T
        \\& = D((G-)T)((DF)N)
        \\& = ((G-)T, (DF)N\otimes_\S -)
        \\&\to ((GF-)T,N\otimes_\R-)
        \\&=(D(GF)N)T.
    \end{align*}
    For any $\alpha:(G-)T\to (DF)N\otimes _\S-$, define
    \begin{displaymath}
        ((\delta_{G,F})_N)_T(\alpha)=\beta_{N,F}\circ (\alpha * F)
    \end{displaymath}
    where
    \begin{displaymath}
        \beta_{N,F}:(DF)N\otimes_\S(F-)\to N\otimes_\R-
    \end{displaymath}
    is given by, at each $S\in \S$, the component
    \begin{displaymath}
        \beta_{N,F,S}:((F-)S, N\otimes_\R -)\otimes_\ZZ (F-)S\to N\otimes_\R-
    \end{displaymath}
    such that, for each $M\in\R\Mod$,
    \begin{displaymath}
        \beta_{N,F,S,M}(\alpha\otimes_\ZZ x)=\alpha_M(x)\in N\otimes_\R M.
    \end{displaymath}
    Various naturality checks are necessary, but these are omitted.
\end{proof}

\begin{lem}
    \thlabel{ass}For $1$-cells $F:\R\to\S$, $G:\S\to\T$ and $H:\T\to\U$, the diagram
    \begin{displaymath}
        \xymatrix{(DH)(DG)(DF)\ar[d]_{\delta_{H,G}*DF}\ar[rr]^{DH*\delta_{G,F}}&&(DH)D(GF)\ar[d]^{\delta_{H,GF}}
            \\D(HG)(DF)\ar[rr]_{\delta_{HG,F}}&&D(HGF)}
    \end{displaymath}
    commutes.
\end{lem}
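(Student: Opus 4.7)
The plan is to verify the coherence identity by unpacking both composites pointwise. Fix $N\in\Mod\R$ and $U\in\U$. Both composites give maps of abelian groups $((DH)(DG)(DF)N)U \to (D(HGF)N)U$, sending a natural transformation $\gamma\colon(H-)U\to((DG)(DF)N)\otimes_\T -$ to a natural transformation $(HGF-)U\to N\otimes_\R -$. I would show these coincide on an arbitrary $\gamma$ using only the defining formula $((\delta_{-,-})_N)_U(\alpha) = \beta_{?,?}\circ(\alpha*?)$ and the evaluation rule $\beta_{A,F,S,M}(\alpha\otimes x)=\alpha_M(x)$ from the construction of $\delta$.

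First I would follow the ``left then bottom'' path: $(\delta_{H,G}*DF)_N$ evaluated at $U$ sends $\gamma$ to $\beta_{(DF)N,G}\circ(\gamma*G)$, after which $(\delta_{HG,F})_N$ at $U$ yields $\beta_{N,F}\circ\bigl((\beta_{(DF)N,G}\circ(\gamma*G))*F\bigr)$. Next I would follow the ``top then right'' path: $(DH*\delta_{G,F})_N$ at $U$ post-composes $\gamma$ with $(\delta_{G,F})_N$ applied inside the codomain, replacing $((DG)(DF)N)\otimes_\T -$ by $D(GF)N\otimes_\T -$; then $(\delta_{H,GF})_N$ at $U$ post-composes with $\beta_{N,GF}$ after whiskering by $GF$.

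With both sides explicit, the statement reduces to the identity
\[
\beta_{N,GF}\circ\bigl(\widetilde\gamma*GF\bigr) \;=\; \beta_{N,F}\circ\bigl((\beta_{(DF)N,G}*F)\circ(\gamma*GF)\bigr),
\]
where $\widetilde\gamma$ is the result of $\delta_{G,F}$-twisting the codomain of $\gamma$. Evaluated at any $M\in\R\Mod$ and $x\in (HGFM)U$, both sides unpack to a double application of the rule $\beta(\alpha\otimes y) = \alpha_?(y)$, and agreement then follows from the elementary whiskering formula $(\alpha*F)_M(x)=\alpha_{FM}(x)$ together with associativity of composition in $\Ab$. The main obstacle is purely bookkeeping: four categories $\R,\S,\T,\U$, three layered tensor--hom adjunctions, and several whiskerings demand careful tracking of naturality squares, but no new idea beyond the definition of $\beta$ and of $\delta$ enters. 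I would organise the verification around a cube whose faces are instances of the defining formula for $\beta$ and of naturality, and check commutativity face by face.
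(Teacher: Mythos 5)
The paper itself omits this proof, describing it as a tedious diagram chase, and your plan is precisely that chase carried out correctly: after using $(\gamma*G)*F=\gamma*GF$ to cancel the common factor $\gamma*GF$, both composites reduce on generators $\alpha\otimes x$ to $\beta_{N,F,M}(\alpha_{FM}(x))$, so the square commutes. The only remaining point, well-definedness of $\beta$ on the tensor product, belongs to the paper's construction of $\delta$ rather than to this lemma, so nothing is missing from your outline.
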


\begin{proof}
    Omitted; the proofs are not instructive, just a tedious diagram chase.
\end{proof}
\begin{defn}
    Writing $1_\R:\R\Mod\to \R\Mod$ and $1_{\R\op}:\Mod \R\to \Mod \R$ for the identity functors, we define a natural transformation
    \begin{displaymath}
        \psi_\R:1_{\R\op}\to D(1_\R)
    \end{displaymath}
    which, at any $N\in \Mod\R$, evaluates to the morphism
    \begin{displaymath}
        \psi_{\R,N}:N\to (D(1_\R))N
    \end{displaymath}
    defined by
    \begin{displaymath}
        (\psi_{\R,N,R}(x))_M(y)=x\otimes_\R y.
    \end{displaymath}
    for any $R\in\R$, $x\in NR$, $M\in \R\Mod$, and $y\in MR$.
\end{defn}

\begin{lem}
    \thlabel{unit1}
    For any $1$-cell $F:\R\to\S$, the diagrams
    \begin{displaymath}
        \xymatrix{DF\ar@{=}[rrd]\ar[rr]^{\psi_\S*DF}&&D(1_\S)(DF)\ar[d]^{\delta_{\S, F}}
            \\&&DF}
        \ \ \
        \xymatrix{DF\ar@{=}[rrd]\ar[rr]^{DF*\psi_\R}&&(DF)D(1_\R)\ar[d]^{\delta_{F,\R}}
            \\&&DS}
    \end{displaymath}
    commute.
\end{lem}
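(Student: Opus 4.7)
The plan is to verify both triangles by direct computation at the level of components. Each side of each triangle is a natural transformation between functors $\Mod\R \to \Mod\S$, and fixing $N\in\Mod\R$ and $S\in\S$ reduces the question to an equality of endomorphisms of the hom set $((F-)S,N\otimes_\R -)=((DF) N) S$.

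For the left triangle, take $\alpha\in((F-)S,N\otimes_\R -)$. The horizontal composite $(\psi_\S * DF)_N$ is $\psi_{\S,(DF)N}$; its component at $S$ sends $\alpha$ to the natural transformation $\widetilde\alpha\colon\ev{S}\to (DF)N\otimes_\S -$ determined by $\widetilde\alpha_Y(y)=\alpha\otimes_\S y$. The next map $(\delta_{1_\S,F})_{N,S}$ is, by definition of $\delta$, given by $\widetilde\alpha\mapsto \beta_{N,F}\circ(\widetilde\alpha * F)$, so at $M\in\R\Mod$ and $x\in(FM)S$ this returns $\beta_{N,F,S,M}(\alpha\otimes_\S x)=\alpha_M(x)$. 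Hence the composite is $\alpha\mapsto\alpha$.

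For the right triangle, take the same $\alpha$. The horizontal composite $(DF*\psi_\R)_N$ is $DF(\psi_{\R,N})$, so at $S$ it postcomposes $\alpha$ with the morphism $(\psi_{\R,N})\otimes_\R -\colon N\otimes_\R -\to D(1_\R)N\otimes_\R -$, producing some $\alpha'\colon(F-)S\to D(1_\R)N\otimes_\R -$. The map $(\delta_{F,1_\R})_{N,S}$ sends $\alpha'$ to $\beta_{N,1_\R}\circ\alpha'$. Writing a typical image $\alpha_M(x)=\sum_i n_i\otimes_\R y_i$ with $n_i\in NR_i$, $y_i\in M R_i$, the defining formula $(\psi_{\R,N,R_i}(n_i))_M(y_i)=n_i\otimes_\R y_i$ then gives
\[
\beta_{N,1_\R,M}(\alpha'_M(x))=\sum_i (\psi_{\R,N,R_i}(n_i))_M(y_i)=\sum_i n_i\otimes_\R y_i=\alpha_M(x),
\]
so this composite is also the identity.

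Conceptually, both triangles are just the unit axioms for the colax $2$-functor $D$, and the content is entirely encoded in the defining formulas for $\psi_\R$ and for $\beta_{N,F}$. The only real obstacle is notational bookkeeping: keeping track of which tensor is over $\R$ and which is over $\S$, and which ``naturality in'' index is attached to which functor.
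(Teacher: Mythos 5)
Your computation is correct, and it is exactly the verification the paper declines to write out: its ``proof'' of this lemma is simply ``omitted; a tedious diagram chase,'' and your component-level check (evaluating at $N\in\Mod\R$ and $S\in\S$, unwinding $\psi$ and $\beta$ on generators, and using that whiskering with $1_\R$ is trivial so that $\beta_{N,1_\R}\circ(\psi_{\R,N}\otimes_\R-)$ is the identity on $N\otimes_\R-$) is precisely that chase, with the labels $\delta_{\S,F}$, $\delta_{F,\R}$ correctly read as $\delta_{1_\S,F}$, $\delta_{F,1_\R}$. The only cosmetic remark is that in the right triangle you need not worry about the choice of representation $\alpha_M(x)=\sum_i n_i\otimes_\R y_i$, since you are composing two already well-defined additive maps and checking the composite on elementary tensors.
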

\begin{proof}
    Omitted; the proofs are not instructive, just a tedious diagram chase.
\end{proof}
\begin{thm}
    \thlabel{colax}
    The Auslander-Gruson-Jensen dual gives a lax $2$-functor
    \begin{displaymath}
    (D, \ \delta, \ \psi)
        :\MODCATCO\to\MODCAT.
    \end{displaymath}
\end{thm}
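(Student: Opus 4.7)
The plan is to assemble the data $(D, \delta, \psi)$ into a lax 2-functor by verifying three things in turn: (i) on each hom-category, the assignment $F \mapsto DF$, $\alpha \mapsto D\alpha$ defines a functor $\MODCATCO(\R,\S) \to \MODCAT(D\R, D\S)$; (ii) the comparison 2-cells $\delta$ and $\psi$ are natural in the appropriate arguments; (iii) the lax coherence axioms hold. Part (iii) has already been packaged into Lemmas \thref{ass} and \thref{unit1}, and the object-level assignment $F \mapsto DF$ lands in accessible functors by the corollary following Proposition \thref{thm:dualaccessible}, so the genuine content of what remains is the 2-cell functoriality and the naturality of $\delta$.

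For (i), given a 2-cell $\alpha: F \to G$ in $\MODCAT(\R,\S)$ — equivalently $\alpha: G \to F$ in $\MODCATCO(\R,\S)$ — I would define $D\alpha: DG \to DF$ pointwise: at $N \in \Mod\R$ and $S \in \S$, take the morphism $D((G-)S)N \to D((F-)S)N$ induced by applying the scalar Auslander-Gruson-Jensen dual to $\alpha_{-,S}: (F-)S \to (G-)S$. Since the scalar dual is contravariant in its functor argument, this assignment preserves vertical composition and identities of 2-cells, producing the required hom-functor.

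For (ii), $\psi_\R$ is a natural transformation $1_{\R\op} \to D(1_\R)$ by construction. The substantive part is naturality of $\delta_{G,F}$ in both $G$ and $F$. Unwinding the formula $\delta_{G,F,N,T}(\alpha) = \beta_{N,F} \circ (\alpha * F)$, this reduces to two whiskering checks: one for 2-cells $F \to F'$ on the right, one for 2-cells $G \to G'$ on the left. Each reduces to the naturality of the evaluation map defining $\beta_{N,F}$ together with the interchange law for horizontal composition. With this in hand, the coherence axioms are immediate: the associativity pentagon is Lemma \thref{ass}, and the two unit-triangle axioms are Lemma \thref{unit1}.

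The main obstacle is really only bookkeeping: tracking that $D$ reverses 2-cells when viewed from $\MODCAT$ (so is covariant from $\MODCATCO$), and that the comparisons $\delta$ run in the lax direction $(DG)(DF) \to D(GF)$ rather than the oplax direction. Once variances are pinned down, the remaining verification consists of routine diagram chases, the substantive instances of which have already been relegated to the omitted proofs of Lemmas \thref{ass} and \thref{unit1}.
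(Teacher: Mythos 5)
Your proposal is correct and follows essentially the same route as the paper, whose entire proof is to observe that \thref{ass} and \thref{unit1} supply the lax coherence axioms; the extra steps you spell out (2-cell functoriality of $D$ and naturality of $\delta$ and $\psi$) are exactly the checks the paper delegates to the earlier definitions and to the omitted "tedious diagram chase" verifications. No substantive difference in approach.
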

\begin{proof}
    \thref{ass} and \thref{unit1} state the appropriate axioms.
\end{proof}
Since lax $2$-functors preserve monads, we immediately gain the following result.
\begin{cor}
    If $L=(L,\ \Delta:L\to L^2,\ \epsilon:L\to 1_\R)$ is an accessible comonad on $\R\Mod$, then
    \begin{displaymath}
        DL \ : \ =(DL,\ (D\Delta)\circ \delta_{L,L},\ (D\epsilon)\circ \psi)
    \end{displaymath}
    is an accessible monad on $\Mod\R$.
\end{cor}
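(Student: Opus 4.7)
The plan is to derive this corollary from the general fact that a lax $2$-functor preserves monads, applied to the lax $2$-functor $D:\MODCATCO\to\MODCAT$ of \thref{colax}.

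A comonad $(L,\Delta,\epsilon)$ in $\MODCAT$ is literally the same data as a monad in $\MODCATCO$: the $2$-cell $\Delta:L\to L^2$ in $\MODCAT$ becomes a $2$-cell $L^2\to L$ in $\MODCATCO$, the counit $\epsilon:L\to 1_\R$ becomes a unit $1_\R\to L$, and the coassociativity and counitality axioms of the comonad are, on the nose, the associativity and unitality axioms of the corresponding monad in $\MODCATCO$. Applying $D$ to this monad in $\MODCATCO$ produces a monad in $\MODCAT$ on the underlying $1$-cell $DL$, whose multiplication and unit are the composites
\[\xymatrix{(DL)(DL)\ar[r]^-{\delta_{L,L}}&D(L^2)\ar[r]^-{D\Delta}&DL}\]
and
\[\xymatrix{1_{\R\op}\ar[r]^{\psi}&D(1_\R)\ar[r]^{D\epsilon}&DL,}\]
i.e.\ the data specified in the statement.

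What remains is the standard diagram chase behind "lax $2$-functors preserve monads". Associativity of the multiplication follows by pasting the hexagon coherence of \thref{ass} with the image under $D$ of coassociativity of $\Delta$; the two unit axioms follow by pasting the triangle coherences of \thref{unit1} with the image under $D$ of the two counit axioms of $\epsilon$. I expect the only real friction to be bookkeeping: tracking the directions of $2$-cells under the contravariance of $D$ on $2$-cells, and the composition orders in compound pastings like $(DL)(DL)(DL)\to D(L^3)\to DL$; no idea beyond the coherence data already assembled is required. Finally, accessibility of $DL$ as an endofunctor of $\Mod\R$ is exactly the content of the corollary to \thref{thm:dualaccessible} upgrading accessibility of any $F:\R\Mod\to\S\Mod$ to accessibility of $DF:\Mod\R\to\Mod\S$.
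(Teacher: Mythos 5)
Your proposal is correct and is exactly the paper's argument: the paper derives this corollary in one line from the fact that lax $2$-functors preserve monads (a comonad on $\R\Mod$ being a monad in $\MODCATCO$), with accessibility of $DL$ already guaranteed because $D$ lands in $\MODCAT$, whose $1$-cells are accessible functors. Your extra remarks spelling out the pasting of \thref{ass} and \thref{unit1} with the (co)associativity and (co)unit axioms just make explicit the standard diagram chase the paper leaves implicit.
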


\subsection{Basic isomorphisms}\label{subsec:basic-isomorphisms}

\begin{rmk}
    We need to discuss various isomorphisms to do with tensor products. The lax 2-functor $D$ is defined on functors $\R\Mod\to\S\Mod$ where $\R$ and $\S$ are small pre-additive categories. To save notational anguish, in proofs and definitions we will assume that the pre-additive categories we discuss are actually rings. We trust that the reader will have no trouble generalising these if they so wish.

    We will write $\R,\S,\T,\U$ for arbitrary pre-additive categories as we require them.
\end{rmk}

\begin{defn}
    Let $k=\mathbb Q/\mathbb Z$.
    If $A$ is an abelian group we will define $A^*=\Hom_{\Ab}(A,k)$.
\end{defn}

\begin{defn}
 For any $A,M\in\M=\R\Mod$ there is a cananonical map
        \begin{align*}
            \tau_{M,A}:M^*\otimes_\R A&\to \Hom_\M(A,M)^*\\
            f\otimes_\R a&\mapsto (g\mapsto f(g(a)))
        \end{align*}
        which is natural in $A$ and $M$.
\end{defn}

\begin{rmk}\thlabel{niceomorphismlol}
    It is a basic well-known fact that,
    for any $A,M\in\R\Mod$, if $A$ is finitely presented then $\tau_{M,A}$ is an isomorphism.

    For any $M\in\R\Mod$, $N\in\Mod\R$, there is an isomorphism
    \begin{displaymath}
        (M,N^*)\simeq(N,M^*)
    \end{displaymath}
    which is natural in $M$ and $N$, by hom-tensor duality.
\end{rmk}
\begin{defn}Let $\R$ and $\S$ be small pre-additive categories.
\begin{itemize}
    \item     We will write $\extdir{(\R\Mod,\S\Mod)}$ for the category of functors $\R\Mod\to\S\Mod$ which preserve directed colimits.
    \item     We will write $\extinv{((\R\Mod)\op,\S\Mod)}$ for the category of functors $(\R\Mod)\op\to\S\Mod$ which preserve inverse colimits.
    \item     We will write $\R\mod=\fp{\R\Mod}$ and $\mod\R=\fp{\Mod\R}$.
\end{itemize} 
\end{defn}
\begin{rmk}\thlabel{rmk:equiv}
    For any $F\in(\R\mod,\S\Mod)$, we have the functor $\extdir{F}:\R\Mod\to\S\Mod$ defined by
    \begin{displaymath}
        \extdir F M=\Hom_{\R\Mod}(-,M)|_{(\R\mod)\op} \otimes_{\R\mod} F.
    \end{displaymath}
    By the ``tensor product" analog of the Yoneda lemma, there is an isomorphism $\rest{\extdir F}{\R\mod}\simeq F$.
    
     For any $H:\R\Mod\to\S\Mod$, there is a canonical natural transformation
     $$\alpha_H:\extdir{\rest{H}{\R\mod}}\to H$$
     such that $H$ preserves directed colimits if and only if $\alpha_H$ is an isomorphism.

     Therefore, restriction ot $\R\mod$ gives an equivalence
     \begin{displaymath}
         \extdir{(\R\Mod,\S\Mod)}\simeq (\R\mod,\S\Mod).
     \end{displaymath}
\end{rmk}

\begin{rmk}\thlabel{rmk:coequiv}
    For any $G\in((\R\mod)\op,\S\Mod)$, we have the functor $\extinv{G}:(\R\Mod)\op\to\S\Mod$ defined by
    \begin{displaymath}
        \extinv G M=(\Hom_{\R\Mod}(-,M)|_{(\R\mod)\op}, G).
    \end{displaymath}
    By the Yoneda lemma, there is an isomorphism $\rest{\extdir G}{(\R\mod)\op}\simeq G$.
    
     For any $K:(\R\Mod)\op\to\S\Mod$, there is a canonical natural transformation
     $$\beta_K:K\to \extinv{\rest{K}{(\R\mod)\op}}$$
     such that $K$ preserves inverse limits if and only if $\beta_K$ is an isomorphism.

     Therefore, restriction ot $\R\mod$ gives an equivalence
     \begin{displaymath}
         \extinv{((\R\Mod)\op,\S\Mod)}\simeq ((\R\mod)\op,\S\Mod).
     \end{displaymath}
\end{rmk}
\begin{cor}
    For any $F\in\extdir{(\R\Mod,\S\Mod)}$ and $M\in\R\Mod$, there is an isomorphism 
    \begin{displaymath}
        \chi_{F,M}:(DF)(M^*)\simeq (FM)^*
    \end{displaymath}
    natural in $F$ and $M$.
\end{cor}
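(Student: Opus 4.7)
The plan is to exhibit $\chi_{F,M}$ levelwise at each $S\in\S$ through a chain of natural isomorphisms, then assemble them into a morphism of right $\S$-modules. Unpacking the definitions gives
\[
((DF)(M^*))S \;=\; D((F-)S)(M^*) \;=\; \bigl((F-)S,\; M^*\otimes_\R - \bigr),
\]
a Hom in the category of directed-colimit-preserving functors $\R\Mod\to\Ab$; both entries preserve such colimits because $M^*\otimes_\R-$ is a left adjoint and $(F-)S$ factors as $F$ (which preserves directed colimits) followed by evaluation at $S$ (which preserves all colimits). By \thref{rmk:equiv}, restriction to $\R\mod$ is fully faithful on such functors, so this Hom equals the end
\[
\int_{A\in\R\mod}\Hom_\Ab\bigl((FA)S,\;M^*\otimes_\R A\bigr).
\]

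For $A\in\R\mod$ the natural map $\tau_{M,A}$ of \thref{niceomorphismlol} is an isomorphism $M^*\otimes_\R A\simeq \Hom_\R(A,M)^*$, and hom-tensor adjunction in $\Ab$ with cogenerator $k=\QQ/\ZZ$ then identifies the integrand with $\Hom_\Ab\bigl((FA)S\otimes_\ZZ\Hom_\R(A,M),\;k\bigr)$. Since the contravariant functor $\Hom_\Ab(-,k)$ sends colimits to limits, it converts the end into the $k$-dual of a coend:
\[
\Hom_\Ab\!\left(\int^{A\in\R\mod}(FA)S\otimes_\ZZ\Hom_\R(A,M),\;k\right).
\]
The coend appearing here is precisely the tensor product $\Hom_\R(-,M)|_{(\R\mod)\op}\otimes_{\R\mod}(F-)S|_{\R\mod}$, i.e.\ $\extdir{(F-)S|_{\R\mod}}(M)$; because $(F-)S$ already preserves directed colimits, the second half of \thref{rmk:equiv} identifies this with $(FM)S$. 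The chain thus terminates at $((FM)S)^* = ((FM)^*)S$, giving the required component of $\chi_{F,M}$ at $S$.

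Naturality in $F$ and $M$ follows from naturality of each intermediate identification ($\tau$ is natural in both arguments, and the Yoneda/coend identifications are standard), and functoriality in $S\in\S$ of the entire chain assembles the components into an isomorphism of right $\S$-modules. I do not anticipate a serious obstacle: the essential content is just the Yoneda/coend presentation of $\extdir{-}$ from \thref{rmk:equiv} combined with the hom-tensor-$\tau$ calculation from \thref{niceomorphismlol}. The only slightly delicate step is the end-coend swap, which is immediate once one recalls that $\Hom_\Ab(-,k)$ sends colimits to limits.
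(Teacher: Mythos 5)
Your proof is correct and takes essentially the same route as the paper's: restrict along the equivalence of \thref{rmk:equiv}, apply $\tau_{M,A}$ from \thref{niceomorphismlol}, and use $\QQ/\ZZ$-duality together with the tensor presentation of $\extdir{(F-)S|_{\R\mod}}$ to land on $(FM)^*$. The only cosmetic difference is that you keep the $\S$-variable and the end/coend bookkeeping explicit where the paper suppresses $\S$ and instead invokes the hom-tensor swap and the functor $\extinv{\cdot}$ of \thref{rmk:coequiv}; the mathematical content is the same.
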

\begin{proof}
    \begin{align*}
    (DF)(M^*)&=(F,M^*\otimes_\R-)
    \\
    & \simeq (\rest{F}{\R\mod}, \rest{M^*\otimes_\R-}{\R\mod})\text{ \thref{rmk:equiv}}
    \\
    & \simeq (\rest{F}{\R\mod}, \rest{\Hom(-,M)}{(\R\mod)\op}^*)\text{ \thref{niceomorphismlol}}
    \\
    & \simeq (\rest{\Hom(-,M)}{(\R\mod)\op}, \rest{F}{\R\mod}^*)
    \\
    & = \extinv{\rest{F}{\R\mod}^*}M
    \\ 
    & \simeq F^*M
    \\
    &=(FM)^*.
\end{align*}
\end{proof}

\begin{lem}
    Let $F:\R\Mod\to\S\Mod$ be accessible. The functor $DF$ is left pure exact, meaning that $DF$ sends pure exact sequences
    \begin{displaymath}
        \xymatrix{0\ar[r]&A\ar[r]& B\ar[r]&C\ar[r]&0}
    \end{displaymath}
    in $\Mod\R$ to a left exact sequence
    \begin{displaymath}
        \xymatrix{0\ar[r]&(DF)A\ar[r]&(DF) B\ar[r]&(DF)C}
    \end{displaymath}
    in $\Mod\S$.
    
    If $F$ preserves directed colimits then $DF$ preserves products. Regardless of  is left pure exact,
\end{lem}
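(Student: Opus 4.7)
The plan is to reduce both assertions to the case where the target is $\Ab$, working componentwise in $\S$. By the definition $((DF)N)S = D((F-)S)N$, and since kernels, cokernels, and products in $\Mod\S$ are computed pointwise, it suffices to prove the two statements for the functor $DG:\Mod\R\to\Ab$ attached to an arbitrary accessible $G=(F-)S:\R\Mod\to\Ab$, where $(DG)N = \Hom_{\Funl{\R\Mod}}(G, N\otimes_\R -)$ for any regular cardinal $\lambda$ such that $G\in\Funl{\R\Mod}$. The second assertion will then be handled only in the additional case where $G$ preserves directed colimits.

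For left pure exactness, I would take a pure-exact sequence $0\to A\to B\to C\to 0$ in $\Mod\R$. By the standard tensor characterisation of purity in module categories, the induced sequence $0\to A\otimes_\R -\to B\otimes_\R -\to C\otimes_\R -\to 0$ of functors $\R\Mod\to\Ab$ is pointwise exact. Each tensor functor $N\otimes_\R -$ preserves all colimits, so in particular it lies in the abelian category $\Funl{\R\Mod}$, in which kernels and cokernels are computed pointwise; the sequence is therefore short exact in $\Funl{\R\Mod}$. Applying the left exact functor $\Hom_{\Funl{\R\Mod}}(G,-)$ then yields the required left exact sequence $0\to(DG)A\to(DG)B\to(DG)C$ in $\Ab$.

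For product preservation, under the hypothesis that $F$ preserves directed colimits, each $G=(F-)S$ lies in $\extdir{(\R\Mod,\Ab)}$, and by \thref{rmk:equiv} is determined by its restriction to $\R\mod$. The central calculation I need is the natural isomorphism
\begin{displaymath}
    \left(\prod_{i\in I} N_i\right) \otimes_\R M \;\simeq\; \prod_{i\in I}\bigl(N_i \otimes_\R M\bigr)
\end{displaymath}
for each $M\in\R\mod$: it holds on representables since tensoring with $\Hom_\R(r,-)$ is evaluation at $r$ and products in $\Mod\R$ are computed pointwise, and it extends to all finitely presented $M$ by choosing a finite presentation of $M$ and combining right exactness of tensor with exactness of arbitrary products in $\Ab$. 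Restricting the Hom computation along the equivalence $\extdir{(\R\Mod,\Ab)}\simeq(\R\mod,\Ab)$ to $\R\mod$ and using that $\Hom$ preserves products in its second slot then gives $(DG)\bigl(\prod_i N_i\bigr)\simeq \prod_i (DG) N_i$, which upgrades to the claim for $DF$ pointwise in $S$.

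The only substantive step is the commutation of products with tensor product on finitely presented modules, which relies crucially on the exactness of arbitrary products in $\Ab$; everything else is a formal consequence of the abelianness of $\Funl{\R\Mod}$ and the left exactness of representable functors on it.
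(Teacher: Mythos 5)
Your proposal is correct and follows essentially the same route as the paper: left pure exactness via the tensor characterisation of purity plus left exactness of $\Hom$ out of $F$, and product preservation via the restriction equivalence with $(\R\mod,\Ab)$ together with the fact that tensoring by finitely presented modules commutes with products. You merely make explicit the componentwise reduction over $\S$ and the details the paper leaves implicit.
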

\begin{proof}
    Left pure exactness is clear from the tensor characterisation of pure exact sequences: If $\xi$ is a pure exact sequence in $\Mod\R$, $\xi\otimes_\R-$ is exact, and so $(DF)\xi=(F,\xi\otimes_\R-)$ is left exact.
    
    For any $N\in\Mod\R$, $(DF)N=(F,N\otimes_\R-)\simeq (F|_{R\mod},N\otimes_\R-|_{\R\mod})$, naturally so. The tensor-by-finitely-presented-modules embedding $\Mod\R\to(\R\mod,\Ab)$ preserves products; this is clearly enough.
\end{proof}

\begin{cor}\thlabel{cor:wow}
    If $F:\R\Mod\to\S\Mod$ and $G:\S\Mod\to\T\Mod$ preserve directed colimits and $F$ preserves products, there is an isomorphism
    \begin{displaymath}
        (DG)(DF)\simeq D(GF).
    \end{displaymath}
\end{cor}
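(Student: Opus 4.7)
The plan is to show that the canonical natural transformation
$\delta_{G,F}\colon (DG)(DF)\to D(GF)$ from $\thref{colax}$ is a natural isomorphism, verifying this pointwise on $\Mod\R$. First I restrict to a character module $M^*$ with $M\in\R\Mod$: iterating the isomorphism $\chi$ of the preceding corollary gives natural isomorphisms
\[
(DG)(DF)(M^*) \xrightarrow[\sim]{(DG)\chi_{F,M}} (DG)((FM)^*) \xrightarrow[\sim]{\chi_{G,FM}} (GFM)^* \xleftarrow[\sim]{\chi_{GF,M}} D(GF)(M^*),
\]
using in turn that $F$, $G$ and hence $GF$ preserve directed colimits. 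Unwinding the constructions of $\chi$ and $\delta$ identifies this composite with $(\delta_{G,F})_{M^*}$.

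To extend from characters to an arbitrary $N\in\Mod\R$, form the natural \emph{character coresolution}
\[
0\to N\to N^{**}\to C^{**}\qquad (C\mathrel{:=}N^{**}/N),
\]
a left-exact sequence obtained by splicing the pure short-exact $0\to N\to N^{**}\to C\to 0$ with the pure monomorphism $C\to C^{**}$ (both being character-duality units). The outer terms $N^{**}$ and $C^{**}$ are characters, on which $\delta_{G,F}$ is an isomorphism by the previous paragraph. I aim to show that both $(DG)(DF)$ and $D(GF)$ send this left-exact sequence to a left-exact sequence in $\Mod\T$; once established, naturality of $\delta_{G,F}$ together with a kernel comparison forces $(\delta_{G,F})_N$ to be an isomorphism as well.

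For $D(GF)$ left-exactness of the image sequence is immediate from its left-pure-exactness (previous lemma). The main obstacle is establishing the same for $(DG)(DF)$, and this is exactly where the hypothesis that $F$ preserves products enters. By \thref{fpacc}, each functor $(F-)\s\colon\R\Mod\to\Ab$ is finitely presented, so $(F,-)$ commutes with filtered colimits at each $\s\in\S$, and therefore $DF$ preserves directed colimits. Being additive, $DF$ also preserves split short-exact sequences, and since every pure-exact sequence is a directed colimit of split ones by \thref{pes}, $DF$ is in fact \emph{pure-exact}, not merely left-pure-exact. Applying $DF$ to the pure short-exact and pure-mono constituents of the character coresolution produces, respectively, a pure short-exact sequence and a pure monomorphism in $\Mod\S$; splicing them and then applying the left-pure-exact $DG$ yields the required left-exact sequence in $\Mod\T$, completing the argument.
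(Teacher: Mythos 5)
Your overall skeleton is the same as the paper's: agreement of $(DG)(DF)$ and $D(GF)$ on character modules $M^*$ via a double application of $\chi$, pure exactness of $DF$ (this is where product preservation of $F$ enters; your derivation via \thref{fpacc} and the fact that pure exact sequences are directed colimits of split ones is fine, and a little more explicit than the paper's appeal to the Auslander--Gruson--Jensen duality), left pure exactness of $DG$ and $D(GF)$, and then gluing along the character coresolution $0\to N\to N^{**}\to C^{**}$, i.e.\ ``enough pure injectives of the form $M^*$''. The genuine gap is the sentence ``Unwinding the constructions of $\chi$ and $\delta$ identifies this composite with $(\delta_{G,F})_{M^*}$.'' This is asserted with no verification, and it is precisely the statement the paper does \emph{not} prove: immediately after this corollary the author records ``the isomorphism in \thref{cor:wow} is $\delta_{G,F}$'' as an open conjecture. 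Your whole extension step leans on this identification --- it is what makes $(\delta_{G,F})_{N^{**}}$ and $(\delta_{G,F})_{C^{**}}$ isomorphisms and what lets naturality of $\delta_{G,F}$ make the comparison squares commute for free --- so as written the argument proves the corollary only modulo an unproved (and, per the paper, nontrivial) claim.

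Note that the corollary itself does not mention $\delta_{G,F}$, and the paper avoids it: it simply produces \emph{some} isomorphism, natural in $M$, on the characters, and then compares kernels along the coresolution. If you rework your proof in that style, the step needing care is the commutativity of the two squares over $N^{**}\to C^{**}$: the isomorphism $\chi$ is natural in $M\in\R\Mod$, i.e.\ with respect to maps of the form $u^*$, and the connecting map $N^{**}\to C^{**}$ of your coresolution (the splice of the cokernel map with the unit $C\to C^{**}$) is not of that form, so naturality there has to be argued rather than quoted. So either supply the unwinding that identifies your character-level composite with $(\delta_{G,F})_{M^*}$ (which would in fact settle the paper's conjecture), or drop $\delta$ and justify the kernel comparison directly.
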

\begin{proof}
For any $M\in\R\Mod$, there are isomorphisms
    \begin{align*}
    (DG)(DF)(M^*) & \simeq (DG)((FM)^*)
    \\
    & \simeq (G(FM))^*
    \\
    & = (GFM)^*
    \\ & \simeq D(GF)(M^*)
\end{align*}
which are clearly natural in all variables.

The functors $DG$ and $D(GF)$ are left pure exact. If $F\in\fun{\R\Mod}$ then $DF\in\fun{\Mod\R}$; this is a statement about functors into $\Ab$ but clearly generalises into the statement that finitely accessible functors which preserve products dualise to finitely accessible functors which preserve products. Therefore $DF$ is pure \textit{exact}. It follows that $(DG)(DF)$ is left pure exact. Since $(DG)(DF)$ and $D(GF)$ agree on the $M^*$s, and there are enough pure injectives of the form $M^*$, this is enough.
\end{proof}

\begin{conj}
    The isomorphism in \thref{cor:wow} is $\delta_{G,F}$.
\end{conj}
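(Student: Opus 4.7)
The plan is to verify the conjectured equality by (i) evaluating $\delta_{G,F}$ on the class of pure-injective modules of the form $M^{*}$, where it can be directly compared with the isomorphism $\chi$ assembled in \thref{cor:wow}; and then (ii) extending by a uniqueness principle, using that both $(DG)(DF)$ and $D(GF)$ are left pure exact and that every right $\R$-module embeds pure-monomorphically into some $M^{*}$.

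For step (i), I would unfold the definition of $(\delta_{G,F})_{M^{*}}$ at an object $T\in \T$. This is the map
\[
\bigl((G{-})T,\ (DF)(M^{*})\otimes_{\S}-\bigr)\ \longrightarrow\ \bigl((GF{-})T,\ M^{*}\otimes_{\R}-\bigr),\qquad \alpha\mapsto \beta_{M^{*},F}\circ(\alpha *F),
\]
where $\beta_{M^{*},F,S,N}(\alpha\otimes_{\ZZ}x)=\alpha_N(x)\in M^{*}\otimes_{\R}N$. Under the isomorphism $\chi_{F,M}\colon (DF)(M^{*})\simeq (FM)^{*}$, and using the identification $M^{*}\otimes_{\R}N\simeq \Hom_{\R\Mod}(N,M)^{*}$ (an iso on $N\in\R\mod$, which is sufficient since everything is evaluated through the equivalence $\extdir{(\R\Mod,\S\Mod)}\simeq (\R\mod,\S\Mod)$ of \thref{rmk:equiv}), the map $\beta_{M^{*},F}$ becomes an instance of the hom-tensor duality of \thref{niceomorphismlol}. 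Tracing the definitions, $(\delta_{G,F})_{M^{*}}$ factors as the composition
\[
(DG)(DF)(M^{*})\ \xrightarrow{(DG)(\chi_{F,M})}\ (DG)\bigl((FM)^{*}\bigr)\ \xrightarrow{\chi_{G,FM}}\ (GFM)^{*}\ \xrightarrow{\chi_{GF,M}^{-1}}\ D(GF)(M^{*}),
\]
which is exactly the iterated $\chi$ used to build the isomorphism in \thref{cor:wow}. The routine part here is bookkeeping the canonical maps coming from tensor-hom duality and checking they are the same on generators of the form $f\otimes_{\R}a$; I would treat this as a symbol-chase and not grind it out beyond one representative case.

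For step (ii), let $\Phi=(DG)(DF)$ and $\Psi=D(GF)$. Both are left pure exact, as explained in the lemma preceding \thref{cor:wow} (the composition of a left pure exact functor after $DF$ remains left pure exact because $DF$ is actually pure exact when $F\in\fun{\R\Mod}$, and one reduces to this case). For any $N\in\Mod\R$, the natural map $N\to N^{**}$ is a pure monomorphism, and $N^{**}=M^{*}$ with $M=N^{*}$. Suppose $\tau,\tau'\colon \Phi\to\Psi$ agree on all $M^{*}$. Naturality at $i\colon N\hookrightarrow N^{**}$ gives
\[
\Psi(i)\circ(\tau_N-\tau'_N)=(\tau_{N^{**}}-\tau'_{N^{**}})\circ\Phi(i)=0,
\]
and since $\Psi$ is left pure exact, $\Psi(i)$ is a monomorphism, so $\tau_N=\tau'_N$. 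Applying this with $\tau=\delta_{G,F}$ and $\tau'$ the isomorphism of \thref{cor:wow}, which agree on the $M^{*}$ by step (i), yields $\delta_{G,F}=\tau'$, so in particular $\delta_{G,F}$ is an isomorphism.

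The main obstacle will be step (i): the definition of $\delta_{G,F}$ involves the evaluation pairing $\beta_{N,F}$ in one variable while $\chi$ is built from repeated use of hom-tensor duality and the equivalences of \thref{rmk:equiv} and \thref{rmk:coequiv}; reconciling these cleanly requires picking a careful normal form for elements of $(F,N\otimes_{\R}-)$ when $N=M^{*}$ (restricting to $\R\mod$ and identifying $M^{*}\otimes_{\R}-|_{\R\mod}\simeq \Hom(-,M)^{*}$), and then verifying a commuting square of canonical maps. Once that bookkeeping is done, step (ii) is automatic.
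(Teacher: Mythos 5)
First, note that the paper offers no proof of this statement at all: it is explicitly left as a conjecture, so there is no argument of the author's to compare yours against, and your proposal has to stand on its own. Within it, step (ii) is correct and genuinely useful. Since $D(GF)$ is left pure exact and $N\to N^{**}$ is a pure monomorphism with $N^{**}$ of the form $M^{*}$, any two natural transformations $(DG)(DF)\to D(GF)$ that agree on all duals $M^{*}$ agree everywhere; this legitimately reduces the conjecture to comparing $(\delta_{G,F})_{M^{*}}$ with the composite of the maps $\chi$ out of which the isomorphism of \thref{cor:wow} is assembled, and it would simultaneously show that $\delta_{G,F}$ is invertible under the hypotheses of \thref{cor:wow}.

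The gap is step (i), and it is not a small one: after your reduction, the identity $(\delta_{G,F})_{M^{*}}=\chi_{GF,M}^{-1}\circ\chi_{G,FM}\circ (DG)(\chi_{F,M})$ \emph{is} the conjecture, and you assert it rather than prove it. The map $\delta_{G,F}$ is defined through the coend-valued evaluation pairing $\beta_{N,F}$ and the whiskering $\alpha\mapsto\beta_{N,F}\circ(\alpha\ast F)$, while $\chi$ is built from the restriction equivalences of \thref{rmk:equiv} and \thref{rmk:coequiv}, the isomorphism $\tau_{M,A}$ of \thref{niceomorphismlol}, and hom--tensor duality; saying that ``tracing the definitions'' identifies $\beta_{M^{*},F}$ with an instance of hom--tensor duality is exactly the verification that has to be carried out, on elements $\gamma\otimes_{\ZZ}x$ of the coend and with naturality in $T\in\T$ and in $M$ checked, and nothing in the proposal excludes a discrepancy between the two canonical routes through $\bigl(\rest{F}{\R\mod},\,\rest{\Hom(-,M)}{(\R\mod)\op}^{*}\bigr)$ (for instance a mismatch of variance or of the direction in which one of the Yoneda/coend identifications is applied). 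As written, the proposal is a sound \emph{strategy} --- reduce to pure-injectives of the form $M^{*}$, then compute --- but the computation that constitutes the mathematical content is deferred, so the conjecture has not been proved.
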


\begin{ex}
    Let ${_\S}A{_\T}$ be an $\S$-$\T$-bimodule and let ${_\R}B{_\S}$ be an $\R$-$\S$-bimodule, finitely presented as a left $\R$-module. There is an isomorphism
    \begin{displaymath}
        D({_\T}A{_\S}\otimes{_\S}\Hom_{\R\Mod}({_\R}B{_\S},{_\R}-))\simeq \Hom_{\Mod\S}({_\T}A{_\S}, -\otimes_\R B{_\S}):\Mod\R\to \Mod \T.
    \end{displaymath}
\end{ex}
\begin{proof}
    Let $G={_\T} A_\S\otimes{_\S}-$, $F={_{\S}}\Hom_{\R\Mod}({_\R}B{_\S},{_\R}-)$ in \thref{cor:wow}.
\end{proof}

\bibliographystyle{acm}
\bibliography{slayer}
\end{document}